\newcommand{\remove}[1]{}
\newcommand{\calB}{{\cal B}}
\newcommand{\calO}{{\cal O}}
\def\F{\mathbb{F}}
\def\Q{\mathbb{Q}}
\def\R{\mathbb{R}}
\def\C{\mathbb{C}}
\def\P{\mathbb{P}}
\def\N{\mathbb{N}}
\def\Z{\mathbb{Z}}
\newtheorem{thm}{Theorem}[section]
\newtheorem{definition}[thm]{Definition}
\newtheorem{remark}[thm]{Remark}
\newtheorem{cor}[thm]{Corollary}
\newtheorem{lemma}[thm]{Lemma}
\newtheorem{proposition}[thm]{Proposition}
\newcommand{\tnote}[1]{ \marginpar{\tiny\bf
             \begin{minipage}[t]{0.5in}
               \raggedright #1
            \end{minipage}}}
\begin{document}
\title{Isoperimetric  Inequalities for Ramanujan Complexes \\
       and Topological Expanders\footnote{The results of this paper were announced in~\cite{KaufmanKazhdanLubotzky}.}}
\author{Tali Kaufman \thanks{Bar-Ilan University, ISRAEL. Email: \texttt{kaufmant@mit.edu}.
Research supported in part by the Alon Fellowship, IRG, ERC and BSF.}
 \and David Kazhdan \thanks{Hebrew University, ISRAEL. Email: \texttt{kazhdan.david@gmail.com}.
Research supported in part by the NSF, BSF and ERC.}
 \and Alexander Lubotzky \thanks{Hebrew University, ISRAEL. Email: \texttt{alexlub@math.huji.ac.il}.
Research supported in part by the ERC, NSF and ISF.}}
\maketitle
\begin{abstract}
Expander graphs have been intensively studied in the last four decades. In recent years a high dimensional theory of expanders has emerged, and several variants have been studied. Among them stand out {\em coboundary expansion} and {\em topological expansion}. It is known that for every $d$ there are {\em unbounded degree} simplicial complexes of dimension $d$ with these properties. However, a major open problem, formulated by Gromov~\cite{Gromov}, is whether {\em bounded degree} high dimensional expanders exist for $d \geq 2$.

We present an explicit construction of {\em bounded degree} complexes of dimension $d=2$ which are {\em topological expanders}, thus answering Gromov's question in the affirmative. Conditional on a conjecture of Serre on the congruence subgroup property, infinite sub-family of these give also a family of {\em bounded degree coboundary expanders}.

The main technical tools are new isoperimetric inequalities for Ramanujan Complexes. We prove {\em linear size} bounds on $\F_2$ systolic invariants of these complexes, which seem to be the first {\em linear} $\F_2$ systolic bounds. The expansion results are deduced from these isoperimetric inequalities.

\end{abstract}

\newpage

\tableofcontents

\newpage

\section{Introduction}
A classical result of Boros and F\"{u}redi~\cite{BorosFuredi} (for $d=2$) and B\'{a}r\'{a}ny~\cite{Barany} (for general $d \geq 2$) asserts that there exists $\epsilon_d >0$ such that given any set of $n$ points in $\R^d$, there exists $z \in \R^d$ which is contained in at least $\epsilon_d$-fraction of the ${n \choose d+1}$ simplicies determined by the set. Gromov~\cite{Gromov} changed the perspective of this result by strengthening and generalizing it in the following way.

\begin{definition}\label{def:geometric-top-overlapping} Let $X$ be a $d$-dimensional pure simplicial complex with a set $X(0)$ of vertices and denote by $X(d)$ the set of $d$-dimensional faces.
\begin{enumerate}
\item\label{item:def-geometric-top-overlapping-first-item} We say that $X$ has the {\em $\epsilon$-geometric overlapping property}, for some $0 < \epsilon \in \R$, if for every $f:X(0) \rightarrow \R^d$, there exists a point $z \in \R^d$ which is covered by at least $\epsilon$-fraction of the images of the faces in $X(d)$ under $\tilde{f}$. Here, $\tilde{f}$ is the (unique) affine extension of $f$.
\item\label{item:def-geometric-top-overlapping-second-item} We say that $X$ has the {\em $\epsilon$-topological overlapping property}, if the same conclusion holds for every continuous extension  $\tilde{f}:X \rightarrow \R^d$ of $f$.
\item\label{item:def-geometric-top-overlapping-third-item} A family of $d$-dimensional simplicial complexes is a geometric (resp. topological) expander if all of them have the $\epsilon$-geometric (resp. topological) overlapping property for the same $\epsilon > 0$.
\end{enumerate}
\end{definition}

Barany's theorem is, therefore, the statement that, for every $d$, $\Delta_n^{(d)}$ - the complete $d$-dimensional simplicial complex on $n$ vertices of dimension $d$ are geometric expanders. Gromov proved the remarkable result, saying that they are also topological expanders (The reader is encouraged to think about the case $d=2$ to see how non-trivial is this result and even somewhat counter intuitive!) Moreover, he went ahead and showed that various other families of simplicial complexes (of fixed dimension $d$) have the topological overlapping property, e.g., spherical buildings (see~\cite{Gromov},~\cite{LubotzkyMeshulamMozes}).

All the examples shown by Gromov are of simplicial complexes of unbounded degrees, i.e., the number of $d$-faces containing a fixed vertex (or even the number of $d$ faces containing a $(d-1)$-face) is unbounded along the family. He suggested~\cite[p.422]{Gromov} that $2$-dimensional Ramanujan complexes (see below) coming from a fixed local non-archimedean field $F$, form a family of bounded degree topological expanders "if such at all exist..." in his words. He showed that they have a weaker property, namely, the conclusion of Definition~\ref{def:geometric-top-overlapping}(\ref{item:def-geometric-top-overlapping-second-item}) holds if $\tilde{f}$ is $k$ to $1$ on faces, for some fixed $k$.  Even the question of existence of families which are bounded degree geometric expanders for general $d$ was left open in~\cite{Gromov}.

The later question, i.e. the question of existence of families of simplicial complexes of bounded degree with the {\em geometric overlapping property} was resolved by Fox, Gromov, Lafforgue, Naor, and Pach~\cite{FGLNP} in a satisfactory way (and in several ways). They showed, for example, the following result.

\begin{thm}\label{thm:FGLNP}
Let $d \geq 2$ and fix a sufficiently large prime power $q$: If $F$ is a local non-archimedean field of residual degree $q$, then the Ramanujan complexes quotients of the Bruhat-Tits building associated with $PGL_{d+1}(F)$ are $d$-dimensional bounded degree geometric expanders.
\end{thm}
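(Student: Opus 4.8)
The plan is to factor Theorem~\ref{thm:FGLNP} into a spectral/combinatorial input about the complexes and a standard input from discrete geometry, joined by a positive-fraction selection argument. Fix $d\geq 2$ and let $X=X_n$ range over the family, with vertex set $X(0)$ of size $n$ and top-dimensional face set $X(d)$. Recall that the Bruhat--Tits building of $PGL_{d+1}(F)$ carries a type function coloring its vertices by $\Z/(d+1)\Z$, in which every $d$-face meets each color exactly once, and that this coloring descends to the relevant (type-preserving) congruence quotients $X$; since the valence of any vertex is a function of $q$ and $d$ alone, the family has bounded degree, and the only thing to prove is that there is a single $\epsilon=\epsilon_d>0$ witnessing the $\epsilon$-geometric overlapping property for every $X_n$.

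\emph{Step 1: a mixing lemma for Ramanujan complexes (the main obstacle).} I would first establish a high-dimensional expander mixing lemma: there is $\lambda=\lambda(q)$, with $\lambda(q)\to 0$ (in fact polynomially small) as $q\to\infty$, such that for every $X=X_n$ in the family and all vertex subsets $A_0,\dots,A_d\subseteq X(0)$, the number $N(A_0,\dots,A_d)$ of faces $\{a_0,\dots,a_d\}\in X(d)$ with $a_i\in A_i$ for every $i$ satisfies $\bigl|\, N(A_0,\dots,A_d)-\bigl(\prod_{i=0}^{d}|A_i|/n\bigr)|X(d)|\,\bigr|\leq\lambda(q)\,|X(d)|$. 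The route is to express $N(A_0,\dots,A_d)$ as an iterated inner product of the indicators $\one_{A_i}$ under the natural incidence/adjacency operators of $X$ (working one dimension at a time, using that the links of vertices of $X$ are themselves expanders with uniformly bounded parameters), and then to bound the non-trivial singular values of these operators by the tempered, ``Ramanujan'' bound, which is $O_d(q^{-1/2})$ after normalization. This last bound is exactly the deep arithmetic input behind the construction of these complexes --- the Jacquet--Langlands correspondence and the Deligne/Lafforgue bounds, together with the explicit spectral theory of Ramanujan complexes --- and it is what forces us to take $q$ large, so that $\lambda(q)$ is smaller than the (tiny, $d$-dependent) geometric constant appearing in Step~2. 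I expect essentially all of the work, and the only use of the hypotheses on $F$ and $q$, to be concentrated here.

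\emph{Step 2: positive-fraction selection, and gluing.} Given any $f\colon X(0)\to\R^d$, apply the positive-fraction (homogeneous) selection theorem of discrete geometry (B\'{a}r\'{a}ny, Pach; see~\cite{FGLNP}) to the $n$-point multiset $f(X(0))$: there are pairwise disjoint $Y_0,\dots,Y_d\subseteq X(0)$ with $|Y_i|\geq c_d\,n$ for an absolute constant $c_d=c_d(d)>0$, and a point $z\in\R^d$, such that $z\in\mathrm{conv}\{f(y_0),\dots,f(y_d)\}$ for every transversal tuple $y_0\in Y_0,\dots,y_d\in Y_d$; this in turn rests on the same-type lemma and the first selection theorem / colorful Carath\'{e}odory, and I would merely quote it. Now feed $A_i=Y_i$ into Step~1: since $|Y_i|\geq c_d n$, the number of $d$-faces of $X$ transversal to $(Y_0,\dots,Y_d)$ is at least $(c_d^{\,d+1}-\lambda(q))\,|X(d)|\geq\tfrac12 c_d^{\,d+1}\,|X(d)|$, once $q$ has been fixed large enough that $\lambda(q)\leq\tfrac12 c_d^{\,d+1}$. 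Every such face $\sigma=\{y_0,\dots,y_d\}$ has affine image $\tilde f(\sigma)=\mathrm{conv}\{f(y_0),\dots,f(y_d)\}\ni z$, so $z$ is covered by at least an $\epsilon_d\eqdef\tfrac12 c_d^{\,d+1}$ fraction of the images of the faces in $X(d)$. Since $f$ was arbitrary and $\epsilon_d$ depends only on $d$, this gives the $\epsilon_d$-geometric overlapping property uniformly along the family, hence Theorem~\ref{thm:FGLNP}. (One could replace the Ramanujan bound by any spectral gap that can be driven to $0$ by increasing $q$; what is essential is not Ramanujan-optimality but that the error $\lambda(q)$ in Step~1 be pushed below the small constant $c_d^{\,d+1}$ --- a mere existence-of-one-transversal-face statement would not yield a positive fraction.)
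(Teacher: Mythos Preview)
The paper does not contain a proof of Theorem~\ref{thm:FGLNP}: this result is quoted from~\cite{FGLNP} as background and motivation, and no argument for it is given anywhere in the paper. There is therefore nothing in the paper to compare your proposal against.

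That said, your outline is essentially the strategy of~\cite{FGLNP} itself: a high-dimensional expander mixing lemma for the colored Ramanujan complexes (the spectral input, using the Ramanujan bound to make the error term $\lambda(q)$ small), combined with the positive-fraction selection theorem of Pach (same-type lemma / colorful Carath\'{e}odory), glued exactly as you describe. One small caveat: your Step~1 formulation tacitly assumes the vertex type function descends to $X$, i.e., that the quotients are partite; the paper's own main results concern the \emph{non-partite} quotients, so if you want the statement in full generality you should note that the mixing argument can be run either via the colored bipartite incidence operators (partite case) or via the Hecke operators on the non-partite quotients --- but this is a technical point and does not affect the architecture of the proof.
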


However, they also left open the question of existence of bounded degree topological expanders.

In~\cite{LubotzkyMeshulam} a model of random $2$-dimensional complexes of bounded {\em edge} degree is presented. These complexes are shown to be topological expanders, but they, also, have unbounded (vertex) degree.

In this paper we show, for the first time, the existence of bounded degree $2$-dimensional topological expanders. We fell short from proving that Ramanujan complexes of dimension $2$ are themselves topological expanders, but we prove:

\begin{thm}\label{thm-main} Fix a sufficiently large prime power $q$ and let $F=\F_q((t))$. Let $\{X_{a}\}_{a \in A}$ be the family of $3$-dimensional non-partite Ramanujan complexes obtained from the Bruhat-Tits building associated with $PGL_{4}(F)$~\cite{LSV2}. For each such $X_{a}$, let $Y_{a}= X_{a}^{(2)}$ - the $2$-skeleton of $X_{a}$. Then, the family of $2$-dimensional simplicial complexes $\{Y_{a}\}_{a \in A}$ is an infinite family
of bounded degree topological expanders. 
\end{thm}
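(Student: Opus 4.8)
Each $X_a$ is a finite quotient $\Gamma_a\backslash\mathcal{B}$ of the locally finite Bruhat--Tits building $\mathcal{B}$ of $PGL_4(F)$, and the number of faces of $\mathcal{B}$ incident to a fixed vertex depends only on $q$; hence the degrees of $Y_a=X_a^{(2)}$ are bounded by a constant $D=D(q)$, uniformly in $a$. The family is infinite because the construction of \cite{LSV2} yields infinitely many pairwise non-isomorphic such quotients, of unbounded size. So the only real content is the $\epsilon$-topological overlapping property with a single $\epsilon>0$.

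\textbf{Reduction to cosystolic expansion.} I would deduce the overlapping property from the Gromov-type local-to-global principle (see \cite{Gromov} and the subsequent quantitative versions): a $d$-dimensional simplicial complex $Z$ of bounded degree has the $\epsilon$-topological overlapping property, with $\epsilon$ depending only on $d$, the degree bound and a parameter $\lambda$, as soon as $Z$ is a $\lambda$-\emph{cosystolic expander} over $\F_2$ in every dimension $0\le i\le d-1$, meaning that (a) $\|\delta_i\gamma\|/|Z(i+1)|\ \ge\ \lambda\cdot \mathrm{dist}(\gamma,Z^i)/|Z(i)|$ for every $i$-cochain $\gamma$, and (b) every $i$-cocycle that is not an $i$-coboundary has normalized weight at least $\lambda$. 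Thus for $d=2$ it suffices to prove that the complexes $Y_a$ are $\lambda$-cosystolic expanders in dimensions $0$ and $1$ for one fixed $\lambda>0$; this is exactly where the new isoperimetric inequalities enter.

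\textbf{Dimension $0$ and clause (a) in dimension $1$.} Dimension-$0$ expansion is just the statement that the graph underlying $Y_a$ (which equals that of $X_a$) is a bounded-degree expander with a uniform spectral gap: this is the Ramanujan estimate for the colored adjacency operators of $X_a$ from \cite{LSV2}, non-partiteness being used precisely so that the gap is not absorbed by an eigenvalue at the top of the spectrum. For clause (a) in dimension $1$ I would run a Garland/spectral (local-to-global) argument: every vertex link of $Y_a$ is the incidence graph of the spherical building of type $A_3$ over $\F_q$, whose normalized spectral gap is bounded below independently of $q$ once $q$ is large (and such buildings are themselves coboundary expanders, \cite{LubotzkyMeshulamMozes}); feeding these uniform local bounds together with the global spectral gap of $X_a$ into the standard cochain-decomposition inequality gives clause (a) with a uniform $\lambda$.

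\textbf{The hard part: the linear cosystolic bound, clause (b) in dimension $1$.} The crux is to show that a $1$-cocycle $\al$ of $Y_a$ whose class in $H^1(Y_a;\F_2)=H^1(X_a;\F_2)$ is nontrivial must have $\|\al\|\ge c\,|Y_a(1)|$ for a $c>0$ independent of $a$ --- the first \emph{linear} $\F_2$ cosystolic bound. Assuming $\|\al\|$ small, the plan is: localize $\al$ at each vertex $v$ to a $0$-cochain $\al_v$ on the link graph $\mathrm{lk}(v)$; the triangle-cocycle identity gives $\delta_{\mathrm{lk}(v)}\al_v=\al|_{\mathrm{lk}(v)}$, so for all but a small fraction of vertices $\delta_{\mathrm{lk}(v)}\al_v$ is small, whence by the uniform expansion of the link graph $\al_v$ is within a small fraction of a constant $c_v\in\{0,1\}$; next, show that the $c_v$ assemble into a global $0$-cochain $\beta$ whose coboundary agrees with $\al$ off a small set of edges, using the global mixing (Ramanujan) property of $X_a$ to propagate consistency of the $c_v$ around cycles and to absorb the accumulated local errors; finally correct $\beta$ to $\beta'$ with $\delta\beta'=\al$ exactly, contradicting $\al\notin B^1(Y_a)$. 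The whole difficulty is to make every constant above independent of $a$ --- controlling the exceptional vertex set and the error propagation using only the fixed local geometry of the links and the uniform Ramanujan spectral gap --- and I expect this to be by far the main obstacle; it is the substance of the isoperimetric inequalities for Ramanujan complexes. (For the conditional statement that an infinite subfamily are genuine coboundary expanders one needs in addition $H^1(Y_a;\F_2)=0$, which for suitable congruence quotients would follow from Serre's conjectured congruence subgroup property; it is not needed for the topological-expander conclusion.)
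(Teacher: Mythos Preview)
Your reduction to cosystolic expansion (clauses (a) and (b)) and your treatment of dimension~$0$ are fine, and your sketch for the systole bound (clause (b)) is roughly in the spirit of the paper's argument (though the paper packages it as an isoperimetric inequality for locally minimal $1$-cochains rather than a direct correction argument). The real gap is in your treatment of clause~(a) in dimension~$1$, i.e.\ the bound on the cofilling constant $\mu_1(Y_a)$.

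You propose to obtain $\F_2$-cocycle expansion by a ``Garland/spectral local-to-global'' argument using only the link expansion and the global spectral gap of $Y_a$. This does not work as stated: Garland's method is a real-coefficient argument and gives vanishing of $H^1(\,\cdot\,;\R)$, not $\F_2$-cocycle expansion; indeed the paper exhibits Ramanujan complexes with $H^1(\,\cdot\,;\F_2)\neq 0$, so no purely spectral argument on $Y_a$ alone can be ``standard'' here. More to the point, you never use that $Y_a$ is the $2$-skeleton of a $3$-dimensional complex $X_a$, and this is precisely the crux of the paper's proof. The paper bounds $\mu_1(Y_a)$ as follows: given $\beta\in B^2(Y_a)=B^2(X_a)$, replace $\beta$ by a nearby locally minimal $\beta'$ (recording the small $\gamma$ with $\beta'=\beta+\delta_1\gamma$); then apply the new isoperimetric inequality in dimension~$2$ for $X_a$, which says that a small locally minimal $2$-cochain has $\|\delta_2\beta'\|\ge\epsilon_2\|\beta'\|$. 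Since $\beta'\in B^2(X_a)\subseteq Z^2(X_a)$, one gets $\delta_2\beta'=0$ and hence $\beta'=0$, so $\beta=\delta_1\gamma$ with $\|\gamma\|$ controlled by $\|\beta\|$. The map $\delta_2:C^2(X_a)\to C^3(X_a)$ is invisible from $Y_a$; as the paper explicitly remarks, this use of the ambient $3$-dimensional structure is exactly what allows the argument to go through for $Y_a$ while leaving open the question for $2$-dimensional Ramanujan complexes themselves. Your proposal misses this mechanism entirely, and with it the paper's main technical input (the $i=2$ case of the isoperimetric inequality, which is by far the hardest part).
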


Before elaborating on the method of proof, let us start by relating the above mentioned results to the notion of {\em coboundary expanders} as (essentially) been defined by Linial and Meshulam~\cite{LinialMeshulam} in a completely different context. Their motivation was to generalize to complexes the Erdos-R\`{e}yni theory of random graphs.

To introduce this and to present the main technical results of this paper we need few notations: Let $X$ be a pure $d$-dimensional simplicial complex (i.e., every maximal simplex is $d$-dimensional). For $i\leq d$, let $X(i)$ be the set of $i$-cells of $X$ and for $\sigma \in X(i)$, denote $c(\sigma)=|\{ \tau \in X(d) | \sigma \subseteq \tau\}|$ and $w(\sigma)=\frac{c(\sigma)}{{d+1 \choose i+1}|X(d)|}$. This weight function on $X(i)$ defines a "norm" on $C^i=C^i(X,\F_2)=\{f:X(i) \rightarrow \F_2\}$ by $||\alpha||=\sum_{\sigma \in \alpha} w(\sigma)$, where $\alpha \in C^i$ is considered also as the subset $\{\sigma \in X(i) \mbox{ } | \mbox{ } \alpha(\sigma) \neq 0\}$ of $X(i)$. As usual $\delta=\delta_i:C^i \rightarrow C^{i+1}$ is the coboundary map $\delta(\alpha)(\sigma)=\sum_{\tau \subseteq \sigma, |\tau|=i}\alpha(\tau)$ for $\alpha \in C^i$ and $\sigma \in X(i+1)$. As $\delta_i \circ \delta_{i-1} = 0$, $B^i  \subseteq Z^i$ where $B^i=\mbox{Im}(\delta_{i-1})$ (resp. $Z^i=\mbox{Ker}(\delta_i)$) is the space of $i$-coboundaries (resp. $i$-cocycles).

\begin{definition}\label{def-coboundary-cocycle-exp}.

\begin{enumerate}
\item($\F_2$-coboundary expansion) For $i=0,1,\cdots, d-1$, denote
$$\epsilon_i(X) := \mbox{min}\{\frac{||\delta_i \alpha  ||}{||[\alpha] ||} \mbox{ } |  \mbox{ }\alpha \in C^i \setminus B^i \}$$
When $[\alpha] = \alpha+B^i$ and $||[\alpha]|| = \mbox{min}\{||\gamma|| \mbox{ } |  \mbox{ } \gamma \in [\alpha]\}$.

\item($\F_2$-cocycle expansion) For $i=0,1,\cdots, d-1$, denote
$$\tilde{\epsilon}_i(X) := \mbox{min}\{\frac{||\delta_i \alpha  ||}{||\{\alpha\} ||} \mbox{ } |  \mbox{ }\alpha \in C^i \setminus Z^i \}$$
When $\{\alpha\} = \alpha+Z^i$ and $||\{\alpha\}|| = \mbox{min}\{||\gamma|| \mbox{ } |  \mbox{ } \gamma \in \{\alpha\}\}$.

\item(cofilling constant) The $i$-th cofilling constant of $X$, $0 \leq i \leq d-1$, is
$$\mu_i(X) = :\mbox{max}_{0 \neq \beta \in B^{i+1}}\{\frac{1}{||\beta||} \mbox{min}_{\alpha \in C^i, \delta \alpha = \beta} ||\alpha||\}$$.
\end{enumerate}
\end{definition}

If $\{X_j\}_{j \in J}$ is a family of $d$-dimensional simplicial complexes with $\epsilon_i(X_j) \geq \epsilon$ (resp. $\tilde{\epsilon_i}(X_j) \geq \epsilon$) for some $\epsilon > 0$ and every $0 \leq i \leq d-1$ and $j\in J$, we say that this is a family of {\em coboundary } (resp. {\em cocycle}) {\em expanders}. Note that $\{X_j\}_{j \in J}$ is a family of cocycle expanders iff there exists $M \in \R$ such that $\mu_i(X_j) \leq M$ for every $i=0, \cdots, d-1$ and $j \in J$.

As $B^0 =\{\mathbf{0},\mathbf{1}\}$, one easily checks that $\epsilon_0=\epsilon_0(X)$ is the normalized Cheeger constant of the $1$-skeleton of $X$, so the $\epsilon_i$'s deserve to be considered as a generalization of the notion of expansion of graphs. Meshulam and Wallach~\cite{MeshulamWallach} on one hand and Gromov~\cite{Gromov} on the other hand showed that $\Delta_n^{(d)}$ form a family of coboundary expanders. But, also in these works the existence of coboundary expanders of bounded degree remained open.

It is easy to see that $\mu_i = \frac{1}{\tilde{\epsilon_i}}$ and that if $\epsilon_i(X) > 0$ then $H^i(X,\F_2)=0$, in which case $\mu_i = \frac{1}{\epsilon_i}$. A family of coboundary expanders is therefore a family with bounded filling norms, but not vise versa. Also, for $d$-dimensional coboundary expanders the $\F_2$ cohomology vanish for every $i < d$.

Ramanujan complexes are in general not coboundary expanders. In fact we will show:

\begin{proposition}\label{prop-non-trivial-first-second-cohomology}
Let $d \geq 2$, $F=\F_q((t))$ and $B=\tilde{A}_d(F)$ the Bruhat-Tits building associated with $PGL_{d+1}(F)$. Then, $B$ has infinitely many quotients $X$ which are Ramanujan complexes with both $H^1(X,\F_2)$ and $H^2(X,\F_2)$ non-zero.
\end{proposition}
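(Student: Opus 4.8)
The plan is to realize the desired $X$'s as torsion-free congruence quotients $X=\Gamma\backslash B$ --- which are Ramanujan complexes by \cite{LSV2} together with Lafforgue's proof of the Ramanujan conjectures over function fields --- and to split the problem into a soft reduction showing it suffices to produce $H^1(X,\F_2)\neq 0$, followed by the construction of a nontrivial $\Z/2$-valued character of a suitable congruence subgroup.

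\emph{Step 1 (reducing $H^2\ne 0$ to $H^1\ne 0$).} Pick $\Gamma$ torsion-free (Selberg's lemma, i.e.\ a deep enough congruence level), so that $X=\Gamma\backslash B$ is aspherical and $H_1(X,\Z)\cong\Gamma^{\mathrm{ab}}$. Since $d+1\ge 3$, the group $PGL_{d+1}(F)$ has Kazhdan's property (T) (it is isogenous to $SL_{d+1}(F)$), hence so does $\Gamma$, so $\Gamma^{\mathrm{ab}}$ is finite; write it as $\bigoplus_i\Z/m_i$. The universal coefficient theorem gives
$$H^2(X,\F_2)\;\cong\;\mathrm{Hom}\big(H_2(X,\Z),\F_2\big)\ \oplus\ \mathrm{Ext}^1_{\Z}\big(\Gamma^{\mathrm{ab}},\F_2\big),$$
and $\mathrm{Ext}^1_{\Z}(\Z/m,\F_2)\cong\Z/\gcd(m,2)$, so $\mathrm{Ext}^1_\Z(\Gamma^{\mathrm{ab}},\F_2)\cong\mathrm{Hom}(\Gamma^{\mathrm{ab}},\F_2)=H^1(X,\F_2)$. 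Thus $H^1(X,\F_2)\ne 0$ automatically forces $H^2(X,\F_2)\ne 0$ (here the finiteness of $\Gamma^{\mathrm{ab}}$, i.e.\ property (T), is essential: a free summand would break this). So it remains to exhibit infinitely many torsion-free congruence quotients $\Gamma\backslash B$ with $\mathrm{Hom}(\Gamma,\Z/2)\ne 0$, equivalently with $\Gamma$ admitting a subgroup of index $2$. (When $d=2$ one even gets $H^2(X,\F_2)\ne 0$ for free: $\chi(X)/|X(0)|$ is a constant depending only on $q$, positive since $\chi(X)=1-\dim_\Q H^1(X,\Q)+\dim_\Q H^2(X,\Q)\ge 1$ by property (T), so $\dim_\Q H^2(X,\Q)=\chi(X)-1\to\infty$ along the family.)

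\emph{Step 2 (producing $H^1\ne 0$).} Fix the LSV data: a degree $n=d+1$ division algebra $D/\F_q(y)$ split at a place $\nu$ with $k_\nu=F$ and ramified enough elsewhere that a suitable $S$-arithmetic group $\Gamma_0\subseteq D^*$ (with $S\ni\nu$) is a cocompact lattice with $\Gamma_0\backslash B$ Ramanujan; then $\Gamma_0(\mathfrak a)\backslash B$, for ideals $\mathfrak a$ coprime to $\nu$ and to the ramification of $D$, are congruence (hence Ramanujan) quotients of $B$, pairwise non-isomorphic (their sizes $\to\infty$) and torsion-free for all but finitely many $\mathfrak a$. I would obtain the required $\Z/2$-character as follows:
\begin{enumerate}
\item If $\mathrm{char}\,\F_q=2$: for a prime $\mathfrak p$ the congruence layer $\Gamma_0(\mathfrak p)/\Gamma_0(\mathfrak p^{2})$ is an $\F_{\mathfrak p}$-vector space, hence a nonzero $\F_2$-vector space, so $\Gamma_0(\mathfrak p)$ surjects onto $\Z/2$; this already yields infinitely many examples.
\item In general, use the reduced norm $\mathrm{Nrd}\colon\Gamma_0\to\mathcal O_S^*$ and, for the non-partite complexes of \cite{LSV2}, the type-rotation homomorphism $\rho\colon\gamma\mapsto\mathrm{val}_\nu(\mathrm{Nrd}\,\gamma)\bmod n$. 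Since $\gamma\equiv 1\ (\mathrm{mod}\ \mathfrak a)$ does not constrain $\mathrm{val}_\nu(\mathrm{Nrd}\,\gamma)$ when $\mathfrak a$ is coprime to $\nu$, the image of $\rho|_{\Gamma_0(\mathfrak a)}$ is still a subgroup of $\Z/n$ of bounded index; choosing $D$, and then (by a Chebotarev/Dirichlet-type density argument) infinitely many $\mathfrak a$, so that this image contains an element of even order, composition with $\Z/n\twoheadrightarrow\Z/2$ supplies the character. For $n$ even one can alternatively use the duality (graph) involution $\theta$ of $\tilde A_d$, applied to a $D$ with $D\cong D^{\mathrm{op}}$: a lattice meeting the coset $PGL_{d+1}(F)\,\theta$ surjects onto $\langle\theta\rangle=\Z/2$.
\end{enumerate}
Combined with Step 1, this produces the claimed infinitely many Ramanujan quotients of $B$ with both $H^1(X,\F_2)$ and $H^2(X,\F_2)$ nonzero.

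\emph{Main obstacle.} The delicate case of Step 2 is $d$ even together with $\mathrm{char}\,\F_q$ odd: then $n=d+1$ is odd, the congruence layers carry no $2$-torsion, $D\not\cong D^{\mathrm{op}}$, and the type rotation only yields odd cyclic quotients, so the $\Z/2$-quotient must be found in the ``level $0$'' arithmetic of $D$ --- in the $S$-units, a ray class group, or the finite quotients $PGL_n(\mathcal O_S/\mathfrak a)$ --- and one must check, via strong approximation for $SL_1(D)$, that such a character survives to \emph{infinitely many} congruence levels rather than dying at a finite stage. That uniformity over the levels, not any single instance, is where the real work lies; the rest of the argument is formal.
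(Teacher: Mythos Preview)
Your Step~1 is correct and in fact cleaner than the paper's argument. The paper deduces $H^2(\Gamma,\F_2)\ne 0$ by passing to the pro-$2$ completion $\Gamma_{\hat 2}$: property~(T) makes $\Gamma^{\mathrm{ab}}$ finite, so $\Gamma_{\hat 2}$ is a nontrivial non-free pro-$2$ group, hence $H^2(\Gamma_{\hat 2},\F_2)\ne 0$, and this pulls back to $\Gamma$ via a separate lemma on central extensions. Your universal-coefficients computation $H^2(X,\F_2)\supseteq \mathrm{Ext}^1_\Z(\Gamma^{\mathrm{ab}},\F_2)\cong\mathrm{Hom}(\Gamma^{\mathrm{ab}},\F_2)=H^1(X,\F_2)$ (the middle isomorphism using only finiteness of $\Gamma^{\mathrm{ab}}$) reaches the same conclusion more directly.

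Step~2, however, has a genuine gap --- and you correctly locate it yourself. In the case $d$ even with $\mathrm{char}\,\F_q$ odd, neither the congruence filtration, the type rotation into $\Z/(d+1)$, nor the duality involution supplies a $\Z/2$-quotient, and you do not actually produce one; you only gesture at where to look. The paper bypasses this entire case analysis with a much simpler device: rather than insisting that $\Gamma$ itself be a congruence subgroup, take a principal congruence subgroup $\Gamma_i\lhd\Gamma_0$ with $\Gamma_0/\Gamma_i\cong PSL_{d+1}(q^{s_i})$, let $S_2$ be the $2$-Sylow subgroup of this finite group (always nontrivial), and let $\tilde\Gamma_i$ be its preimage in $\Gamma_0$. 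Then $\tilde\Gamma_i/\Gamma_i\cong S_2$ is a nontrivial finite $2$-group, so it surjects onto $\Z/2$, giving $H^1(\tilde\Gamma_i,\F_2)\ne 0$ immediately. The quotient $\tilde\Gamma_i\backslash B$ is still Ramanujan because it is covered by the Ramanujan complex $\Gamma_i\backslash B$, and the spectrum of a quotient embeds in that of a cover. This works uniformly in $d$ and $q$ with no arithmetic subtleties --- the key realization you are missing is that the lattice producing $X$ need not itself be congruence; it only needs to \emph{contain} a congruence subgroup.
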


Proposition~\ref{prop-non-trivial-first-second-cohomology} should be compared with a well known result of Garland~\cite{Garland} asserting that for such $X$, the real $i$-cohomology always vanish, i.e., $H^i(X,\R)=0$, for every $i < d$.


A deep result of Gromov~\cite{Gromov} asserts that coboundary expanders are topological expanders. While there are several methods to prove geometric overlapping (\cite{FGLNP},\cite{Parzanchevsky}), this is essentially the only known method to prove topological overlapping. As Proposition~\ref{prop-non-trivial-first-second-cohomology} shows, Ramanujan complexes, in general, are {\em not} coboundary expanders (see further discussion in Section~\ref{subsection-Bruhat-Tits buildings} and in Section~\ref{section-Serre-conj-implications}). By the same reason, the $Y_{a}$'s of Theorem~\ref{thm-main} are not coboundary expanders. We are still able to show that they are topological expanders due to two reasons. First, the following result~\cite{KaufmanWagner} which extends Gromov's criterion to the cases where the cohomology does not necessarily vanish. Unfortunately, we need some more notations.

\begin{definition}\label{def-systole} For a finite $d$-dimensional simplicial complex $X$ and $1 \leq i \leq d-1$ denote
$$\mbox{syst}^i(X) = \mbox{min}\{||\alpha|| \mbox{ } | \mbox{ } \alpha \in Z^i(X,\F_2)\setminus B^i(X,\F_2) \}.$$
(Write $\mbox{syst}^i(X)=\infty$ if $H^i(X,\F_2)=0$.) This is the {\em $i$-cohomological systole} of $X$ over $\F_2$.
\end{definition}

\begin{thm}\label{thm-Gromov-criteria-for-top-exp}
Given $0 < \mu, \eta \in \R$, and $d \in \N$, there exists $c=c(d,\mu,\eta)>0$ such that if $X$ is a finite pure simplicial complex of dimension $d$ satisfying:
\begin{enumerate}
\item\label{item:thm:Gromov-systolic:first} For every $0 \leq i \leq d-1$, $\mu_i(X) \leq \mu$.
\item\label{item:thm:Gromov-systolic:second} For every $0 \leq i \leq d-1$, $\mbox{syst}^i(X) \geq \eta$.
\end{enumerate}
Then $X$ has the $c$-topological overlapping property.
\end{thm}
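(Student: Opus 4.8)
Proof proposal for Theorem~\ref{thm-Gromov-criteria-for-top-exp} (the Gromov-type criterion extended to non-vanishing cohomology, due to Kaufman–Wagner).

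The goal is to prove a topological overlap statement, so I want to recover Gromov's argument but to track where the vanishing of $\F_2$-cohomology was used and replace it by the systolic lower bound. Let me think about the setup and strategy.

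First, the setup. We are given $f: X \to \mathbb{R}^d$ continuous (affine on simplices after perturbation, but let me keep it general) and we want a point $z$ covered by $\geq c$-fraction of the images of the top faces. By Gromov's reduction (via a Borsuk–Ulam / Fadell–Husseini type argument, or equivalently via the "pagoda" / iterated-fibering construction), it suffices to produce a point $z$ whose preimage "catches" many simplices; the combinatorial heart is a statement about the complex $X$ alone — one needs that $X$ has what Gromov calls "homological filling" control at every dimension $i \leq d-1$ with the right constants, plus a cochain-level argument that forces overlap. Concretely, Gromov's scheme builds, for a generic point $z$ and a generic affine map, a sequence of cochains $\gamma_i \in C^i(X;\F_2)$ (roughly, $\gamma_d$ is the $\F_2$-indicator of the faces whose image contains $z$, and $\gamma_{i-1}$ is a "cone" / filling of $\delta$-related data coming from the $(d-i)$-skeleton of the target), and the key inequalities are: (a) $\|\gamma_d\|$ is proportional to the overlap fraction at $z$; (b) if the overlap is small then each $\gamma_i$ is a coboundary (or a small cocycle) and can be filled with controlled norm using $\mu_i(X) \leq \mu$; (c) iterating down to $i = 0$ produces a contradiction with the trivial fact that $C^0$ filling of $\mathbf 1$ is forced. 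In the classical case one needs $H^i(X;\F_2) = 0$ precisely so that the cochains $\gamma_i$ produced along the way, which are automatically cocycles, are in fact coboundaries and hence fillable.

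Here is the step-by-step plan.

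First I would set up the "cone" construction in the target: fix a generic point $z \in \mathbb{R}^d$, a generic affine chain of subspaces / a generic point in general position, and for a generic $\tilde f$ define for each $0 \le i \le d$ a cochain $\gamma_i \in C^i(X;\F_2)$ by pulling back, across $\tilde f$, the cone on $z$ against a generic $(d-i)$-flat; transversality makes $\gamma_i$ well-defined and makes the identity $\delta \gamma_{i-1} = \gamma_i$ hold away from a measure-zero set of choices, and $\|\gamma_d\| = \mathrm{Overlap}_z(\tilde f)$ up to the normalization in $w(\cdot)$. This is Gromov's construction and I would cite it, doing the $\F_2$ bookkeeping carefully.

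Second, I would run the descent. Suppose for contradiction that $\mathrm{Overlap}_z(\tilde f) < c$ for all $z$; averaging over $z$, $\|\gamma_d\| < c$ in expectation, so pick a generic $z$ with $\|\gamma_d\|$ small. Now $\gamma_d \in Z^d$ trivially (top dimension), and $\delta \gamma_{d-1} = \gamma_d$, so $\gamma_d \in B^d$; apply $\mu_{d-1}(X) \le \mu$ to replace $\gamma_{d-1}$ by a filling $\gamma_{d-1}'$ with $\|\gamma_{d-1}'\| \le \mu \|\gamma_d\|$. The point where the hypothesis $H^{d-1} = 0$ would be used in Gromov is to say $\gamma_{d-1}'$ is again a coboundary; instead, I use that $\gamma_{d-1}' \in Z^{d-1}$ (since $\delta \gamma_{d-1}' = \gamma_d = \delta\gamma_{d-1}$... wait, need care: one arranges $\gamma_{d-1}'$ to be cohomologous to $\gamma_{d-1}$ and in particular a cocycle only after the descent is set up correctly — the correct statement is that at each stage the relevant cochain is a cocycle, and we want it to be a coboundary). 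The systolic hypothesis $\mathrm{syst}^{d-1}(X) \ge \eta$ now says: a $(d-1)$-cocycle of norm $< \eta$ must be a coboundary. So provided $\mu \|\gamma_d\| < \eta$, i.e. $c$ is chosen with $\mu c < \eta$, the cocycle $\gamma_{d-1}'$ (of norm $\le \mu c < \eta$) lies in $B^{d-1}$, hence is $\delta$ of some $\gamma_{d-2}$, and I can re-enter the loop one dimension lower. Iterating, at stage $i$ I have a cocycle of norm $\le \mu^{d-i} c$; choosing $c$ small enough that $\mu^{d} c < \eta$ keeps every intermediate cocycle below the systole, hence a coboundary, all the way down.

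Third, I would close the loop at the bottom. At $i=0$ the construction yields a $0$-cochain $\gamma_0$ with $\delta\gamma_0 = \gamma_1$ and, by the geometric meaning of the cone construction at the last step, $\gamma_0$ is forced to be $\mathbf 1$ (or $\mathbf 0$) on a nonnegligible portion — more precisely Gromov's construction guarantees $\gamma_0 \neq \mathbf 0, \mathbf 1$ is impossible, or yields a parity obstruction showing $\|\gamma_0\|$ is bounded below by an absolute constant depending only on $d$; combined with $\|\gamma_0\| \le \mu^{d} c$ this is a contradiction once $c < \mu^{-d} \cdot(\text{that absolute constant})$. Hence $\mathrm{Overlap}_z(\tilde f) \ge c$ for some $z$, with $c = c(d,\mu,\eta)$ explicitly of the form $\mathrm{const}(d) \cdot \min\{\mu^{-d}, \eta \mu^{-d}\}$ — I would just record that such a $c>0$ exists.

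The main obstacle, and the place I expect to spend real effort, is Step One together with the transversality and the precise $\F_2$-chain-level statement that makes the descent in Step Two legitimate: one must set things up so that at every level the cochain handed to the next stage is genuinely a cocycle (so that "small cocycle $\Rightarrow$ coboundary" via $\mathrm{syst}^i$ applies) and so that the norm bound $\mu_i$ can be invoked — in Gromov's original argument these are intertwined with the topology of $\mathbb{R}^d$ and the generic-position choices, and the delicate point here is verifying that replacing "cohomology vanishes" by "systole is large" does not break the inductive invariant. Once the invariant "$\gamma_i$ is a cocycle of norm $\le \mu^{d-i} c < \eta$, hence a coboundary" is established, the rest is the routine telescoping above. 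I would therefore organize the write-up as: (i) recall Gromov's cone cochains and their properties as a black box from~\cite{Gromov} (and~\cite{KaufmanWagner}); (ii) prove the inductive descent lemma using $\mu_i$ and $\mathrm{syst}^i$; (iii) derive the contradiction at level $0$; (iv) assemble the constant $c(d,\mu,\eta)$.
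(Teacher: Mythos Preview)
The paper does not contain a proof of Theorem~\ref{thm-Gromov-criteria-for-top-exp}; immediately after the statement it says ``A proof of Theorem~\ref{thm-Gromov-criteria-for-top-exp} is given in~\cite{KaufmanWagner}'' and then uses the result as a black box in Section~\ref{section-main-proof}. So there is no ``paper's own proof'' to compare your proposal against.

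That said, your sketch is the right shape for what the Kaufman--Wagner argument is expected to be: Gromov's cone/pagoda construction produces a tower of cochains tied to a generic point $z$, the cofilling bound $\mu_i \le \mu$ lets you replace each by a small filling, and the systolic hypothesis $\mathrm{syst}^i \ge \eta$ is exactly the surrogate for ``$H^i = 0$'' that forces each small cocycle encountered in the descent to actually be a coboundary. Your own parenthetical (``wait, need care'') flags the genuine technical point: one must set up the inductive invariant so that the cochain produced at each stage is a \emph{cocycle} (not merely a cochain with small coboundary), since the systole hypothesis says nothing about non-cocycles; getting this right requires tracking the difference $\gamma_{i-1} - \gamma_{i-1}'$ and the geometric identity $\delta\gamma_{i-1} = \gamma_i$ simultaneously. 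The closure at level $0$ is also less automatic than you suggest --- one needs the parity/degree argument from Gromov's original paper to get the absolute lower bound. But these are exactly the places you already identified as ``where I expect to spend real effort,'' so your plan is sound; just be aware you are reconstructing~\cite{KaufmanWagner}, not anything in the present paper.
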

In different words, cocycle expanders with large systole are topological expanders.

A proof of Theorem~\ref{thm-Gromov-criteria-for-top-exp} is given in~\cite{KaufmanWagner}.

Thus, to prove Theorem~\ref{thm-main},  it suffices to prove that the $Y_{a}$'s of Theorem~\ref{thm-main} satisfy both conditions (\ref{item:thm:Gromov-systolic:first}) and (\ref{item:thm:Gromov-systolic:second}) of Theorem~\ref{thm-Gromov-criteria-for-top-exp}. To this end we will prove the following isoperimetric result(s):

\begin{thm}\label{thm-isoperimetric -inequalities}
Fix $q \gg 0$. Let $F$ be a local field of residue degree $q$, $\calB=\tilde{A}_3(F)$ the $3$-dimensional Bruhat-Tits building associated with $PGL_4(F)$. Then there exist $\eta_0,\eta_1,\eta_2, \epsilon_0, \epsilon_1,\epsilon_2$ all greater than $0$ such that: if $X$ is a non-partite Ramanujan quotient of $\calB=\tilde{A}_3(F)$ and $\alpha \in C^i(X,\F_2)$, $0 \leq i \leq 2$, a locally minimal cochain with $||\alpha|| \leq \eta_i$ then $|| \delta_i(\alpha)|| \geq \epsilon_i ||\alpha||$.
\end{thm}

The concept of locally minimal cochain is quite central in our work, but too technical to be defined in the introduction - see Definition~\ref{def-loc-minimal} below. We believe that the above six constants can be made to be independent of $q$, but as of now we know this only for some of them.

Theorem~\ref{thm-isoperimetric -inequalities} is best possible: it is not true without the assumption that $||\alpha||$ is small. As mentioned before, the Ramanujan complexes are in general {\em not} coboundary expanders: For $i=1$ or $i=2$ (but not for $i=0$), it is possible to find a locally minimal $\alpha \in C^i \setminus B^i$ with $\delta_i(\alpha) = 0$.

It is interesting to observe that in order to prove that the $Y_{a}$'s of Theorem~\ref{thm-main} are topological expanders we have to prove the above isoperimetric results for the $X_{a}$'s. Of course at level $i=0,1$, $Y_{a}$ and $X_{a}$ are the same, but for $i=2$, $\delta_2$ is zero on $C^2(Y_{a},\F_2)$ but non-zero on $C^2(X_{a}, \F_2)$. We refer the reader to Section~\ref{section-main-proof} to see how the information on $X_{a}$ helps to deduce the desired result for $Y_{a}$.

Our main technical result is Theorem~\ref{thm-isoperimetric -inequalities}. Before proving this theorem, we will give a "baby version" of it for $2$-dimensional Ramanujan complexes. This case is easier (for reasons to be understood in Section~\ref{section-one-cochain-three-dim-proof} and Section~\ref{section-three-dim-coboundary-exp}) though still far from trivial, and the main ideas of the proof of Theorem~\ref{thm-isoperimetric -inequalities} show up already there. It also has some independent interest (see Corollary~\ref{cor-systolic-two-dim} below and the discussion following it). In this case we can also give a very sharp estimates on the constants, which are also independent of $q$:

%
%

\begin{thm}
\label{thm-two-dim-coboundary-exp}
Given $\epsilon_0> 0$, there exists $q(\epsilon_0) \in \N$ and $0 < \epsilon  \in\R$ such that: Let $X$ be a $2$ dimensional Ramanujan complex, a quotient of the Bruhat-Tits building of $PGL_3(\F_q((t)))$ with $q \geq q(\epsilon_0)$. Let $\alpha \in C^1(X,\F_2)$ be a locally minimal $1$-cochain, with $|| \alpha || < \frac{1}{4(1+\epsilon_0)}$. Then $||\delta_1(\alpha)|| \geq \epsilon ||\alpha||$.

\end{thm}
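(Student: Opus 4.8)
The plan is to work in the Bruhat--Tits building $\calB = \tilde A_2(F)$ and exploit the fact that $X$ is a Ramanujan quotient of $\calB$, so that locally (up to the injectivity radius, which grows with the size of the complex, or more precisely one can pass to a good lift) $X$ looks like the building $\calB$, which is a contractible $2$-dimensional complex whose links are incidence graphs of projective planes over $\F_q$ — these are excellent bipartite expanders with spectral gap of order $q^{-1/2}$. First I would set up the \emph{local} picture: for a locally minimal $1$-cochain $\alpha$ (see Definition~\ref{def-loc-minimal}), ``locally minimal'' should mean that $\alpha$ cannot be shortened by adding the coboundary $\delta_0 \mathbf{1}_v$ of a single vertex (or by modifying on a bounded neighborhood), so in the link of each vertex $v$ the restriction $\alpha_v$ of $\alpha$ to the edges at $v$ is a subset of the (bipartite) link graph that is at most ``half'' the vertex-set in the appropriate normalized sense; this is where the hypothesis $\|\alpha\| < \frac{1}{4(1+\epsilon_0)}$ and the role of $\epsilon_0$ enter — it forces each vertex-link to contain only a small fraction of edges, well below the threshold where local minimality could fail.

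Next I would estimate $\|\delta_1 \alpha\|$ by a link / double-counting argument. The coboundary $\delta_1\alpha$ evaluated on a $2$-face $\sigma = \{u,v,w\}$ records the parity of $|\alpha \cap \sigma|$; counting triangles $\sigma$ on which $\delta_1\alpha$ vanishes is, vertex by vertex, counting edges of the link graph $\mathrm{lk}(v)$ both of whose endpoints lie in (or both lie outside) the vertex-set determined by $\alpha_v$. Since $\mathrm{lk}(v)$ is a bipartite expander of normalized second eigenvalue $O(q^{-1/2})$, the expander mixing lemma says that the number of ``monochromatic'' edges is close to what a random graph would give, namely proportional to $|\alpha_v|\cdot(\text{link size})$ plus an error of order $q^{-1/2}\cdot(\text{link size})$; provided $|\alpha_v|$ is bounded away from the middle (which local minimality guarantees) the ``bichromatic'' edges — these are exactly the triangles on which $\delta_1\alpha \neq 0$ through $v$, in one of the three coordinate directions — are at least a constant fraction of the edges incident to $\alpha_v$. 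Summing over all vertices $v$ and over the three roles a triangle can play, and dividing by the weights, yields $\|\delta_1\alpha\| \geq \epsilon \|\alpha\|$ for an explicit $\epsilon > 0$ depending only on the spectral gap (hence on the choice of $q \geq q(\epsilon_0)$), which is the claimed isoperimetric bound. I would choose $q(\epsilon_0)$ large enough that $q^{-1/2}$ is small compared to the gap $\epsilon_0$ between $|\alpha_v|/(\text{link size})$ and $1/2$ coming from the norm hypothesis.

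The main obstacle, and the step that needs the most care, is the passage between the global cochain $\alpha$ on $X$ and its local restrictions $\alpha_v$: the argument above is clean only when no triangle of $X$ is ``degenerate'' (non-partiteness / multiple edges in $\calB$ quotients can create short identifications) and when local minimality can be applied at \emph{every} vertex simultaneously, not just one at a time. I expect one must argue that if $\alpha$ failed the conclusion then averaging the local expansion defects over all vertices would produce a vertex $v$ at which flipping $\alpha$ on $\mathrm{lk}(v)$ (i.e. adding $\delta_0\mathbf{1}_v$) strictly decreases $\|\alpha\|$, contradicting local minimality — so the real content is making this averaging/contradiction bookkeeping precise, controlling the weights $w(\sigma)$ (which for a Ramanujan quotient of $\tilde A_2$ are essentially uniform up to constants), and ensuring the small-norm hypothesis is exactly what is needed to keep every $\alpha_v$ on the correct side of the expander-mixing threshold. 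A secondary technical point is handling cochains $\alpha$ that are locally minimal but cohomologically nontrivial; here, unlike the $i=1,2$ cases flagged after Theorem~\ref{thm-isoperimetric -inequalities}, at level $i=0$ on the $1$-skeleton no such obstruction arises, so for this baby version the contradiction genuinely closes and gives the stated linear isoperimetric inequality.
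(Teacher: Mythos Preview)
Your local link argument is essentially the right first half, but there is a genuine gap: the purely local computation does not close, and you have misidentified where the hypothesis $\|\alpha\|<\frac{1}{4(1+\epsilon_0)}$ enters.

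Concretely, local minimality gives only $|\alpha_v|\le Q/2$ at each vertex (with $Q=2(q^2+q+1)$), and this bound is sharp --- nothing in the global hypothesis $\|\alpha\|<\frac{1}{4(1+\epsilon_0)}$ forces any individual $\alpha_v$ to be small; it only bounds the \emph{average} of the $|\alpha_v|$. If you run your link mixing bound with $|\alpha_v|\le Q/2$ you get $|E_{X_v}(\alpha_v,\overline{\alpha_v})|\ge\frac12(q+1-\sqrt q)|\alpha_v|$, and summing over $v$ yields $2t_1+2t_2\ge(q+1-\sqrt q)|\alpha|$ (here $t_i$ is the number of triangles with exactly $i$ edges in $\alpha$). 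Combining with $t_1+2t_2+3t_3=(q+1)|\alpha|$ gives only $t_1-3t_3\ge -\sqrt q\,|\alpha|$, which is vacuous. The ``averaging/contradiction'' you sketch in the last paragraph cannot rescue this: local minimality is already assumed, so it cannot be contradicted, and at vertices with $|\alpha_v|$ near $Q/2$ the link expansion genuinely yields no gain.

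What is missing is a \emph{global} step that uses the Ramanujan property of the $1$-skeleton $X^{(1)}$, not just of the links. The paper splits vertices into \emph{thin} ($|\alpha_v|<(1-\epsilon)Q/2$) and \emph{thick}; thin vertices give an improved link bound with an extra factor $(1+\epsilon)$. The point is then to show that a definite fraction of the edge-weight of $\alpha$ touches thin vertices. Since each thick vertex carries $\ge(1-\epsilon)Q/2$ edges of $\alpha$, the number of thick vertices is $O(|\alpha|/Q)$; now the global expansion of $X^{(1)}$ (Corollary~\ref{cor-bounding-lambda-in-quotient-Ram-complex} and Proposition~\ref{prop-cheeger}) bounds the number of edges internal to the thick set, and it is precisely here that $\|\alpha\|<\frac{1}{4(1+\epsilon_0)}$ is used to make that internal edge-count strictly less than $|\alpha|$. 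This forces $r:=\sum_{v\text{ thin}}|\alpha_v|\ge\xi|\alpha|$ for some $\xi>0$, and plugging back gives $t_1\ge\epsilon_1 q|\alpha|$ for large $q$. Without this global ingredient the argument does not go through.
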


Every locally minimal $\alpha \in C^1(X,\F_2)$ satisfies $||\alpha|| \leq \frac{1}{2}$ (see Section~\ref{subsection-notions-of-minimality}). So, the theorem says that if $\alpha$ has slightly less than half of the maximal number of edges of a locally minimal cochain, its coboundary is "large". This is essentially best possible, as we will show (Proposition~\ref{prop-non-trivial-first-cohomology}) that there are non-zero locally minimal cochains $\alpha$ with $\delta(\alpha) = 0$.


%

The above isoperimetric  results and their proofs give various (mod $2$) systolic inequalities. These seem to be the first {\em linear} lower bound on cohomological systole. Such lower bounds are of importance for quantum error correcting codes. They are needed for the estimate of the distance of the so called CSS-quantum codes~\cite{MeyerFreedmanLuo,Zemor,LubotzkyGuth}. For example we have:

\begin{cor}
\label{cor-systolic-two-dim}
Given $\epsilon_0 > 0$, there exists $q(\epsilon_0) \in \N$ such that: If $X$ is a non-partite Ramanujan complex of dimension $2$, a quotient of the Bruhat-Tits building of $PGL_3(\F_q((t)))$ with $q \geq q(\epsilon_0)$, and $\alpha \in Z^1(X,\F_2) \setminus B^1(X,\F_2)$, i.e., a $1$-cocycle which represents a non-trivial cohomology class, then $||\alpha|| \geq \frac{1}{4(1+\epsilon_0)}$.
\end{cor}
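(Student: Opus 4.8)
The plan is to deduce the corollary directly from Theorem~\ref{thm-two-dim-coboundary-exp} by contraposition; the one thing needing care is replacing an arbitrary nontrivial cocycle by a \emph{locally minimal} representative of its cohomology class.

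First I would fix $\epsilon_0 > 0$, let $q(\epsilon_0)$ and $\epsilon$ be the constants furnished by Theorem~\ref{thm-two-dim-coboundary-exp}, and take $X$ and $\alpha \in Z^1(X,\F_2)\setminus B^1(X,\F_2)$ as in the statement. Since $X$ is finite, the coset $\alpha + B^1$ is finite, so I can pick a representative $\alpha^\ast \in \alpha + B^1$ of minimal norm, $||\alpha^\ast|| = ||[\alpha]||$. Because $B^1 \subseteq Z^1$, this $\alpha^\ast$ is again a $1$-cocycle; it lies in the same nontrivial class as $\alpha$, so $\alpha^\ast \notin B^1$, hence $\alpha^\ast \neq 0$ and $\delta_1(\alpha^\ast) = 0$.

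The key observation is that a globally norm-minimal representative of a coboundary coset is, a fortiori, locally minimal in the sense of Definition~\ref{def-loc-minimal}: local minimality only forbids decreasing the norm by adding certain \emph{local} coboundaries, whereas $\alpha^\ast$ cannot have its norm decreased by adding \emph{any} coboundary (this comparison of the notions of minimality is the content of Section~\ref{subsection-notions-of-minimality}). Thus $\alpha^\ast$ satisfies all the hypotheses imposed on $\alpha$ in Theorem~\ref{thm-two-dim-coboundary-exp} except possibly the size bound. Now I would argue by contradiction: if $||\alpha|| < \frac{1}{4(1+\epsilon_0)}$, then $||\alpha^\ast|| \leq ||\alpha|| < \frac{1}{4(1+\epsilon_0)}$, so the theorem gives $||\delta_1(\alpha^\ast)|| \geq \epsilon\, ||\alpha^\ast||$; but $\delta_1(\alpha^\ast) = 0$ and $||\alpha^\ast|| > 0$, a contradiction. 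Hence $||\alpha|| \geq \frac{1}{4(1+\epsilon_0)}$.

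I expect essentially no obstacle of substance here: all the difficulty lies in Theorem~\ref{thm-two-dim-coboundary-exp}, and this argument is a short unwinding of definitions. The only point I would be careful to state explicitly is that global minimality within the coboundary coset implies local minimality as defined in Definition~\ref{def-loc-minimal}; one should also note that the extra ``non-partite'' hypothesis in the corollary is at least as strong as anything Theorem~\ref{thm-two-dim-coboundary-exp} requires (it is, since that theorem is stated for all $2$-dimensional Ramanujan quotients of the building of $PGL_3(\F_q((t)))$), so the invocation is legitimate.
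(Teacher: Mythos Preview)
Your proposal is correct and follows essentially the same approach as the paper. The paper does not spell out a separate proof of Corollary~\ref{cor-systolic-two-dim}, but states it as an immediate consequence of Theorem~\ref{thm-two-dim-coboundary-exp} (see Corollary~\ref{cor-from-2-dim-result}); the analogous three-dimensional systolic corollary (Corollary~\ref{cor-from-3-dim-result}) is proved exactly as you do, by passing to a locally minimal representative of smaller or equal norm in the same coboundary coset and applying the expansion theorem to reach a contradiction with $\delta_1(\alpha')=0$.
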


Ramanujan complexes of dimension $2$ are in many ways non-archimedean analogue of $2$-dimensional manifolds, i.e. Riemann surfaces. It is interesting to compare the systolic behavior. For arithmetic hyperbolic surfaces, the $1$-homological systole is logarithmic and by Poincare duality the same holds for the $1$-cohomological systole. But, for the Ramanujan complexes the $1$-homological systole is logarithmic and the $1$-chomological systole is linear!


See more in Section~\ref{section-three-dim-coboundary-exp} where such linear lower bounds are proved also for $2$-cocycles of $3$-dimensional complexes. 

The paper is organized as follows. In Section~\ref{section-homological-defs} we introduce the basic cohomological notations (over $\F_2$), and the (local)-minimality of cochains. In Section~\ref{section-buildings}, we review the spherical and affine buildings and the properties of Ramanujan complexes. In Section~\ref{section-main-proof} we show how, assuming Theorem~\ref{thm-isoperimetric -inequalities}, one can prove Theorem~\ref{thm-main}, leaving the (quite technical) proof of Theorem~\ref{thm-isoperimetric -inequalities} to Sections ~\ref{section-one-cochain-three-dim-proof} (cases $i=0,1$) and~\ref{section-three-dim-coboundary-exp} (case $i=2$). To illustrate first the main ideas of the proof of Theorem~\ref{thm-isoperimetric -inequalities} in an easier case, we give in Section~\ref{section-two-dim-proof} a proof of Theorem~\ref{thm-two-dim-coboundary-exp}. In Section~\ref{section-Serre-conj-implications} we show that our results combined with Serre's conjecture~\cite{Serre} on the congruence subgroup property give bounded degree $2$-dimensional coboundary expanders. Serre's conjecture has been proven in most cases, but unfortunately, not in the cases we need here. In fact, what we need is only the vanishing of $H^1(\Gamma, \F_2)$ for suitable congruence subgroups, which is a corollary of Serre's conjecture, and possibly easier than it. See Section~\ref{section-Serre-conj-implications} for more.

\remove{
\subsection{previous introduction}
Let $X$ be a $d$-dimensional Ramanujan complex (of type $\tilde{A}_d$). These are finite simplicial complexes obtained as quotients of the Bruhat-Tits building associated with $PGL_{d+1}(F)$, where $F$ is a non-archimedean local field, with finite residue field $\F_q$. The reader is referred to~\cite{LSV1} and the references therein for a general theory and to~\cite{LSV2} for explicit construction. See also Section~\ref{section-buildings} for a review of their properties, especially the ones needed in this paper.

Let $C^i(X,\F_2)$ be the $\F_2$ space of the $i$-cochains of $X$ (see Section~\ref{section-homological-defs} for all the homological notations), and $\alpha \in C^i(X,\F_2)$. We denote
$$|| \alpha || = \#\{\sigma \in X(i) | \alpha(\sigma) \neq 0\}, $$
i.e., we can think of $\alpha$ as the set of $i$-cells of $X$ and $|| \alpha ||$ is its cardinality.
For every $j$-cell $\tau$, $0 \leq j < i$, $\alpha$ induces an $(i-j-1)$-cell $\alpha_{\tau}$ on the link $X_{\tau}$ of $X$ at $\tau$.
We say that $\alpha$ is {\em locally minimal} if for every $\tau \in X(i-1)$, $$ || \alpha_{\tau} || \leq \frac{1}{2} \# X_{\tau}(0),$$ I.e, for every codimension one simplex $\tau$, the support of $\alpha$ contains at most half of the $i$-cells containing $\tau$. Every coset of $C^i(X,\F_2)$ mod $B^i(X,\F_2)$, the $i$-coboundaries, has a representative which is locally minimal.

Assume first that $X$ is a two-dimensional complex with $n$ vertices. In this case (see Section~\ref{section-buildings}), its $1$-skeleton is a $Q$-regular graph, with $Q=2(q^2+q+1)$, in particular, $X$ has $\frac{Qn}{2}$ edges, and every edge lies on $q+1$ triangles. With these notations we can now formulate our first result.

\begin{thm}
\label{thm-two-dim-coboundary-exp}
Let $X$ be a $2$ dimensional Ramanujan complex on $n$ vertices and $\alpha \in C^1(X,\F_2)$, a locally minimal $1$-cochain.
Then, for every $\epsilon_0 > 0$, there exists $\epsilon_1 > 0$ such that if $|| \alpha || < \frac{Qn}{8(1+\epsilon_0)}$ then
$||\delta_1(\alpha)|| \geq \epsilon_1 \cdot q ||\alpha||$, provided $q >>0$ (i.e., $q \geq q(\epsilon_0)$).
Here, $\delta_1$ is the coboundary map $\delta_1: C^1(X,\F_2) \rightarrow C^2(X,\F_2)$.
\end{thm}

Note that every locally minimal $\alpha \in C^1(X,\F_2)$ has at most $\frac{Qn}{4}$ edges. So, the theorem says that if $\alpha$ has slightly less than half of it, its coboundary is "large". This is essetially best possible. In XXX below, we show that it is possible for $X$ to have non-trivial $H^1(X,\F_2)$. This implies that there are locally minimal $\alpha$'s with $\delta_1(\alpha)=0$, but they must be of size at least $\frac{Qn}{8(1+\epsilon_0)}$. This gives the following systolic application.

\begin{cor}
\label{cor-systolic-two-dim}
If $X$ is a Ramanujan complex of dimension $2$ and $\alpha \in Z^1(X,\F_2) \ B^1(X,\F_2)$, i.e., a $1$-cocycle which represent a non-trivial cohomology class, then $||\alpha|| \geq \frac{Qn}{8(1+\epsilon)}$, provided $q \geq q(\epsilon)$ for some $q(\epsilon)$.
\end{cor}

It is interesting to compare this situation with systolic invariants of $2$-dimensional manifolds. For these Poincare duality implies that their systolic invariants for the $1$-homology and the $1$-cohomology are the same (as $1=2-1$). For Ramanujan complexes, one can show (see Section~\ref{section-buildings} below) that (sometimes) there are non-trivial $1$-homology classes of support that is of logarithmic size, while Corollary~\ref{cor-systolic-two-dim} says that all non trivial $1$-cohomology class are of linear size. So $X$ behaves dramatically different than manifolds.

Theorem~\ref{thm-two-dim-coboundary-exp} and Corollary~\ref{cor-systolic-two-dim} can be extended to $d$-dimensional Ramanujan complexes with less impressive (but still explicit, in principle) constants (see XXX below). But, more interesting is the extension to the case of $\alpha$ being $2$-cocycle. Here, some new points are needed.

Let now $X$ be a $3$-dimensional Ramanujan complex and $\alpha \in C^2(X,\F_2)$ a $2$-cochain. We say that $\alpha$ is {\em vertex locally minimal}, if for every vertex $v$ of $X$ $||\alpha_v|| =\mbox{dist}(\alpha_v, B^1(X_v,\F_2))$. I.e, $\alpha_v$, which is a $1$-cochain of $X_v$ is the closest to $B^1(X_v,\F_2)$ in its coset (w.r.t. the Hamming distance). Vertex local minimality implies local minimality. See Section~\ref{section-homological-defs} for a discussion of the various minimality conditions on representatives of cosets of $C^i(X,\F_2)/B^i(X,\F_2)$.
Anyway, every coset has also a vertex locally minimal representative. We are now ready to state the following theorem.

\begin{thm}\label{thm-three-dim-coboundary-exp}
Let $X$ be a $3$-dimensional Ramanujan complex and $\alpha \in C^2(X,\F_2)$ a vertex locally minimal $2$-cochain. Then, there exists $\epsilon_2 > 0$ s.t. if $||\alpha|| \leq \rho q^3 n$ for $\rho=XXX$, then $||\delta_2(\alpha)|| \geq \epsilon_2 q ||\alpha||$ provided $q>>0$.
Here $\delta_2: C^2(X,\F_2) \rightarrow C^3(X,\F_2)$ is the coboundary map.
\end{thm}

In a similar way to Corollary~\ref{cor-systolic-two-dim}, we can deduce a linear lower bound on the $2$-systolic mod $\F_2$ of $X$.

\begin{cor}
If $q >>0$, then every non-trivial cocycle $\alpha \in H^2(X,\F_2)$ has support at least $\epsilon_2 q n$, with $n=\#X(0)$.
\end{cor}

Let us now pass to the main application of the above isoperimetric  inequalities. But first some background. In $1984$ Boros and Furedi~\cite{BorosFuredi} proved for every set $P$ of $n$ points in the plane $\R^2$ there is a point in $\R^2$ that belongs to at least $(\frac{2}{9}-o(1))({n \choose 3})$ of the closed triangles induced by the elements of $P$.
Barany~\cite{Barany} proved a higher dimensional version. For every $d\in \N$, there exists a constant $c_d > 0$ such that for every set $P$ of $n$ points in $\R^d$, there is a point $\R^d$ which is contained in at least $c_d n^{d+1}$ of the closed simplices whose vertices belong to $P$. Gromov suggested to look at Barany Theorem as a statement about the complete $d$-dimensional complex on $n$-points $\Delta_{n,d}$. It reads as follows. For every affine map $f:\Delta_{n,d} \rightarrow \R^d$ there is a point $x \in \R^d$ whose pre-image intersects a positive proportion ($c_d$) of the $d$ dimensional simplicies of $\Delta_{n,d}$. This is called {\em geometric overlapping}.

Thinking of it as such, it calls to extensions in two directions. One is {\em topological overlapping}. Indeed, Gromov~\cite{Gromov} proved that the last result is valid for every continuous map $f$ and not only for the affine ones. This is quite remarkable even for $\R^2$; The reader is encouraged to visualize its elementary meaning for $n$ points in the plane. A second direction of extension is to prove results of that kind for other simplicial complexes which are not as dense as $\Delta_{n,d}$. In~\cite{FGLNP}, Fox, Gromov, Laffourge, Naor and Pach showed that there are simplicial complexes of bounded degree (i.e., for every vertex $v \in X$, the number of cells containing $v$ is bounded) with the geometric overlapping property.
This was done in two methods. A probabilistic one and by showing that the above mentioned Ramanujan complexes have the geometric overlapping property.
These results leave open the question of simplicial complexes of bounded degree with the topological overlapping property. In~\cite[p.422]{Gromov}, Gromov shows that two dimensional Ramanujan complexes have a restricted version of the overlapping property (see there for precise result) and concludes: "It remains unclear if such inequality even holds for locally bounded $2$-polyhedron...". In~\cite{LubotzkyMeshulam} a model of random $2$-dimensional simplicial complex of bounded {\em edge degree} is presented and it is shown there that they have the topological overlapping property. But, all the complexes have unbounded vertex degree.

It is tempting to conjecture that the Ramanujan complexes have the overlapping property (and as just mentioned, Gromov proved a partial result in this direction for dimension $2$). We can not prove that. However, we can use Theorem~\ref{thm-three-dim-coboundary-exp} to show that some related $2$-dimensional complexes of bounded degree have the desired property.

\begin{thm}
For a fixed $q >>0$ there exists an $\epsilon > 0$ such that: If $F$ is a local field with residue degree $q$ $\calB_3$, the $3$ dimensional Bruhat-Tits building of $G=PGL_4(F)$ and $X$ a finite quotient of $\calB_3$ (w.r.t the uniform lattice of $G$), then for every continuous map $f:X(2) \rightarrow \R^2$, there exists a point $x_0 \in \R^2$ whose pre-image $f^{-1}(x_0)$ intersects at least $\epsilon|X(2)|$ of the triangles of $X$.
\end{thm}

Namely, for a fixed $q>>0$, the family of the 2-skeletons of the $3$-dimensional Ramanujan complexes of type $\tilde{A}_3$ over $F$ is a family of $2$-dimensional complexes of bounded degree with the topological overlapping property. Theorem~\ref{thm-main} is deduced from Theorems~\ref{thm-two-dim-coboundary-exp} and~\ref{thm-three-dim-coboundary-exp} via a criterion of Gromov for topological overlapping using "inverse filling norms" (or "expansion" in the sense of Linial and Meshulam~\cite{LinialMeshulam}, see also~\cite{LubotzkyJapan}). We will elaborate on this in Section~\ref{section-homological-defs} after we introduce the cohomological notations.

The paper is organized as follows. In Section~\ref{section-homological-defs} we introduce the basic cohomological notations (over $\F_2$), and review its connection to overlapping and expanders, as well as the various notions of (local)-minimality of cochains. In Section~\ref{section-buildings}, we review the spherical and affine buildings and the properties of Ramanujan complexes. Theorem~\ref{thm-two-dim-coboundary-exp} (and its extension to $1$-cochains in higher dimensional complexes) is proved in Section~\ref{section-two-dim-proof}. In Section~\ref{section-three-dim-coboundary-exp} we prove Theorem~\ref{thm-three-dim-coboundary-exp} and we end in Section~\ref{section-main-proof}, with a proof of theorem~\ref{thm-main} as well as a discussion of the possible extensions to higher dimensions.

}

\section{Coboundary expansion and overlapping}
\label{section-homological-defs}

In this section we review some notations and results on general simplicial complexes.

\subsection{Expansion of graphs}
Let $X=(V,E)$ be a finite connected graph, $A=A_X$ its adjacency matrix and $\Delta$ its Laplacian, i.e., $\Delta:L^2(X) \rightarrow L^2(X)$ is defined by $\Delta(f)(v)=deg(v)f(v)-\sum_{y \thicksim v}f(y)$ where the sum is over the neighbors of $v$. If $X$ is $k$-regular then $\Delta=kI-A$.
It is well known that the eigenvalues of $\Delta$ (and $A$) are intimately connected with expansion properties of $X$. Let us mention a variant which we need, due to Alon and Milman~\cite[Prop 4.2.5]{LubotzkyBook}.

\begin{proposition}~\label{prop-cheeger} Let $\lambda_1(X)$ be the smallest positive eigenvalue of $\Delta$. Then, for every subset $W \subseteq V$,
\begin{enumerate}
\item $|E(W,\bar{W})| \geq \frac{|W||\bar{W}|}{|V|} \lambda_1(X)$,

where $E(W,\bar{W})$ denotes the set of edges from $W$ to its complement $\bar{W}$.

\item The Cheeger constant $h(X)$ satisfies:

$h(X):=\mbox{min}_{W \subseteq V} \frac{|E(W,\bar{W})|}{\mbox{min}(|W|,|\bar{W}|)} \geq \frac{\lambda_1(X)}{2}$.

\item If $X$ is $k$-regular then

$ E(W):=E(W,W)=\frac{1}{2}(k|W|-E(W,\bar{W})) \leq \frac{1}{2}(k-\frac{|\bar{W}|}{|V|}\lambda_1(X))|W|$.
\end{enumerate}
\end{proposition}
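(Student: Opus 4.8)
The plan is to deduce all three parts from the variational (Rayleigh quotient) characterization of $\lambda_1(X)$. Since $X$ is finite and connected, the kernel of $\Delta$ is exactly the line spanned by the constant function $\one$, so for every $f \in L^2(X)$ with $\sum_{v\in V} f(v)=0$ one has $\langle \Delta f, f\rangle \geq \lambda_1(X)\,\|f\|^2$. I would also use the standard identity for the Dirichlet form of a graph Laplacian, $\langle \Delta f, f\rangle = \sum_{\{u,v\}\in E}(f(u)-f(v))^2$.

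First I would prove (1). Fix $W \subseteq V$ and set $f = \one_W - \frac{|W|}{|V|}\,\one$, the projection of the indicator of $W$ onto the orthogonal complement of the constants, so that $\sum_v f(v)=0$. A direct computation gives
$$\|f\|^2 = |W|\Big(1-\tfrac{|W|}{|V|}\Big)^2 + |\bar W|\Big(\tfrac{|W|}{|V|}\Big)^2 = \frac{|W||\bar W|^2 + |\bar W||W|^2}{|V|^2} = \frac{|W|\,|\bar W|}{|V|}.$$
On the other hand, adding a constant does not change edge differences, so $(f(u)-f(v))^2 = (\one_W(u)-\one_W(v))^2$, which equals $1$ when the edge $\{u,v\}$ joins $W$ to $\bar W$ and $0$ otherwise; hence $\langle \Delta f, f\rangle = |E(W,\bar W)|$. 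Combining this with the Rayleigh inequality yields $|E(W,\bar W)| \geq \lambda_1(X)\,\frac{|W|\,|\bar W|}{|V|}$, which is (1) (and is trivial when $W=\emptyset$ or $W=V$).

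Part (2) follows by dividing: for nonempty proper $W$,
$$\frac{|E(W,\bar W)|}{\min(|W|,|\bar W|)} \geq \lambda_1(X)\,\frac{\max(|W|,|\bar W|)}{|V|} \geq \frac{\lambda_1(X)}{2},$$
since $\max(|W|,|\bar W|) \geq |V|/2$; taking the minimum over $W$ gives $h(X)\geq \lambda_1(X)/2$. For part (3), assuming $X$ is $k$-regular, I would double-count the edges incident to $W$: $k|W| = \sum_{v\in W}\deg(v) = 2E(W) + |E(W,\bar W)|$, so $E(W) = \tfrac12\big(k|W| - |E(W,\bar W)|\big)$; plugging in the bound from (1) gives $E(W) \leq \tfrac12\big(k|W| - \lambda_1(X)\tfrac{|W||\bar W|}{|V|}\big) = \tfrac12\big(k - \tfrac{|\bar W|}{|V|}\lambda_1(X)\big)|W|$.

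This is a classical argument with no genuine obstacle; the one point deserving care is the identification $\langle \Delta f, f\rangle = |E(W,\bar W)|$, which relies precisely on the fact that the correction term $\frac{|W|}{|V|}\one$ cancels in every edge difference. This also clarifies the role of working with the projected indicator rather than $\one_W$ itself: the projection is needed to invoke the spectral bound, but it is invisible to the Dirichlet form.
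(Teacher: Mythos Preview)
Your argument is correct and is in fact the standard Rayleigh-quotient derivation of these inequalities. The paper itself does not prove this proposition at all: it simply attributes the result to Alon and Milman, citing \cite[Prop~4.2.5]{LubotzkyBook}, and moves on. So there is no ``paper's own proof'' to compare against; you have supplied exactly the classical argument that the cited reference contains.

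One minor remark on presentation: in part~(3) the paper's statement uses $E(W,\bar W)$ without absolute value bars inside the identity $E(W)=\tfrac12(k|W|-E(W,\bar W))$, while you (correctly) write $|E(W,\bar W)|$ throughout. This is a harmless notational looseness in the paper---both mean the cardinality of the cut---but it is worth being aware that the two are identified.
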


We will also need the following variant for bipartite bi-regular graphs, whose proof can be found for example in~\cite{EvraGLubotzky}.

\begin{proposition}(Mixing Lemma for bipartite bi-regular graphs)~\label{prop-mixing-for-bipartite}
Let $X=(V',V'',E)$ be a bipartite $(k',k'')$-bi-regular finite graph. Then, for every subsets $A \subseteq V'$, $B \subseteq V''$,
$$|E(A,B)| - \frac{\sqrt{k'k''}|A||B|}{\sqrt{|V'||V''|}}| \leq \lambda(X)\sqrt{|A||B|},$$ where $\lambda(X)$ is the second largest eigenvalue of the adjacency matrix of $X$.
\end{proposition}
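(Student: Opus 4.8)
The plan is to reduce the statement to a singular-value estimate for the bipartite (``biadjacency'') matrix $M\in\R^{V'\times V''}$ of $X$, given by $M_{u,v}=1$ if $\{u,v\}\in E$ and $0$ otherwise; note that $|E(A,B)|=\mathbf 1_A^{T}M\mathbf 1_B$, where $\mathbf 1_A\in\R^{V'}$ and $\mathbf 1_B\in\R^{V''}$ are indicator vectors. The adjacency matrix of $X$ is the $2\times2$ block matrix whose off-diagonal blocks are $M$ and $M^{T}$, so its nonzero eigenvalues are exactly $\pm\sigma_i$ where $\sigma_1\ge\sigma_2\ge\cdots\ge 0$ are the singular values of $M$; in particular $\lambda(X)=\sigma_2$. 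Bi-regularity gives $M\mathbf 1_{V''}=k'\mathbf 1_{V'}$ and $M^{T}\mathbf 1_{V'}=k''\mathbf 1_{V''}$, hence $MM^{T}\mathbf 1_{V'}=k'k''\,\mathbf 1_{V'}$; combined with the Schur-test bound $\sigma_1^2=\|M\|^2\le\|M\|_{1\to1}\|M\|_{\infty\to\infty}=k''k'$, this shows $\sigma_1=\sqrt{k'k''}$ with top singular pair $u_1=\mathbf 1_{V'}/\sqrt{|V'|}$, $v_1=\mathbf 1_{V''}/\sqrt{|V''|}$ (using also the edge-count identity $k'|V'|=k''|V''|$).

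Next I would split the indicators along the top singular direction: $\mathbf 1_A=\frac{|A|}{|V'|}\mathbf 1_{V'}+a$ with $a\perp\mathbf 1_{V'}$, and $\mathbf 1_B=\frac{|B|}{|V''|}\mathbf 1_{V''}+b$ with $b\perp\mathbf 1_{V''}$. Expanding $\mathbf 1_A^{T}M\mathbf 1_B$ along this decomposition, the two cross terms vanish, since $M\mathbf 1_{V''}=k'\mathbf 1_{V'}\perp a$ kills $a^{T}M\mathbf 1_{V''}$ and $M^{T}\mathbf 1_{V'}=k''\mathbf 1_{V''}\perp b$ kills $\mathbf 1_{V'}^{T}M b$. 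The main term is $\frac{|A||B|}{|V'||V''|}\,\mathbf 1_{V'}^{T}M\mathbf 1_{V''}=\frac{|A||B|}{|V'||V''|}\,k'|V'|=\frac{k'|A||B|}{|V''|}$, which by $k'|V'|=k''|V''|$ equals $\frac{\sqrt{k'k''}\,|A||B|}{\sqrt{|V'||V''|}}$, exactly the centering term in the statement. Thus $|E(A,B)|-\frac{\sqrt{k'k''}\,|A||B|}{\sqrt{|V'||V''|}}=a^{T}Mb$.

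Finally I would estimate the error term: $|a^{T}Mb|\le\|a\|\,\|Mb\|$, and since $b\perp v_1$, writing $b$ in an orthonormal basis of right singular vectors (omitting $v_1$) gives $\|Mb\|\le\sigma_2\|b\|$. By Pythagoras $\|a\|\le\|\mathbf 1_A\|=\sqrt{|A|}$ and $\|b\|\le\|\mathbf 1_B\|=\sqrt{|B|}$, so $|a^{T}Mb|\le\sigma_2\sqrt{|A||B|}=\lambda(X)\sqrt{|A||B|}$, which is the asserted inequality. The only delicate point — and the closest thing to an obstacle — is the bookkeeping identifying $\lambda(X)$ with $\sigma_2$ and verifying that $v_1=\mathbf 1_{V''}/\sqrt{|V''|}$ really is a top right singular vector of $M$ (so that orthogonality to it forces contraction by a factor $\sigma_2$); this is the step where connectedness of $X$ (or, in its absence, the observation that a repeated top singular value only makes $\sigma_2$ larger, hence the bound weaker) enters, and it explains why the clean formulation uses the second-largest eigenvalue rather than a spectral gap.
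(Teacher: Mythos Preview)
Your proof is correct and is the standard argument for the bipartite expander mixing lemma. The paper itself does not prove this proposition; it simply states it and refers the reader to~\cite{EvraGLubotzky} for a proof, so there is no in-paper argument to compare against. Your reduction to a singular-value estimate for the biadjacency matrix, the identification of $\lambda(X)$ with $\sigma_2$, the splitting of the indicator vectors along the top singular direction, and the Cauchy--Schwarz/Pythagoras bound on the remainder are all exactly the ingredients one expects, and your handling of the edge-count identity $k'|V'|=k''|V''|$ to rewrite the main term in the symmetric form $\frac{\sqrt{k'k''}\,|A||B|}{\sqrt{|V'||V''|}}$ is clean.
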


\subsection{Coboundary expansion of simplicial complexes}\label{subsection:coboundary-exp-simp-complexes}
Let us now pass to the higher dimensional case, so from now on $X$ will be a finite $d$-dimensional simplicial complex with a set of vertices $V=X(0)$. Namely, $X$ is a set of subsets of $V$ with $F_1 \in X$ and $F_2 \subseteq F_1$ implies $F_2 \in X$ and $\mbox{max}\{|F| \mbox{ } | F \in X\} = d+1$. For $F \in X$, $\mbox{dim}(F) := |F| - 1$, and $X(i)$ denotes the set of cells of dimension $i$, i.e., those $F$ with $|F| = i+1$. So, $X(-1)=\{\emptyset\}$. By $X^{(i)}$ we denote the $i$-skeleton of $X$, i.e., $X^{(i)}=\cup_{j \leq i}X(j)$. We say that $X$ is a {\em pure} complex if all maximal cells (facets) in $X$ are of the same dimension. All the simplicial complexes dealt in this paper are pure.
Let $C^i=C^i(X,\F_2)$ be the space of $i$-cochains, i.e., $\{f:X(i) \rightarrow \F_2\}$. It will be sometimes convenient to think of $\alpha \in C^i$ as a collection of $i$-cells and we will denote its cardinality by $|\alpha|$.

For $\sigma \in X(i)$, we denote

\begin{eqnarray}\label{eqarray-c(sigma)}
c(\sigma) & := & |\{\tau \in X(d) | \sigma \subseteq \tau \}|
\end{eqnarray}

and

\begin{eqnarray}\label{eqarray-w(sigma)}
w(\sigma) & :=  & \frac{c(\sigma)}{{d+1 \choose i+1} |X(d)|}.
\end{eqnarray}

Note that $\sum_{\sigma \in X(i)}w(\sigma) =1$. The weight function $w$ on $X(i)$ defines a norm on $C^i(X,\F_2)$ by

\begin{eqnarray}\label{eqarray-||alpha||}
||\alpha|| & := & \sum_{\sigma \in \alpha} w(\sigma),
\end{eqnarray}

where again we consider $\alpha$ as a collection of $i$-cells. Note that $||\alpha|| \leq 1$.

Let $\delta = \delta_i:C^i \rightarrow C^{i+1}$ be the coboundary map, i.e., for $\sigma \in X(i+1)$
$$\delta_i(\alpha)(\sigma)=\sum_{\tau \subseteq \sigma, \mbox{ dim}\tau = i}\alpha(\tau) .$$

Note that as we are working over $\F_2$, we can ignore the issue of orientation. It is easy to see that $\delta_{i+1} \circ \delta_i =0$. Denote $B^i=B^i(X,\F_2)$ and $Z^i=Z^i(X,\F_2)$ the $i$-coboundaries ($\mbox{Im} \delta_{i-1}$) and the $i$-cocycles ($\mbox{Ker} \delta_i$), respectively. Then, $B^i \subseteq Z^i$ and $H^i=H^i(X,\F_2)=Z^i/B^i$ is the (reduced) $i$-cohomology group over $\F_2$.

%
%
%
%
%

\subsection{Notions of minimality}\label{subsection-notions-of-minimality}
Given a pure simplicial complex $X$ of dimension $d$ and $\tau$ a simplex of $X$, the link $X_{\tau}$ of $X$ at $\tau$ is the set of all sets of the form $\sigma \setminus \tau$, where $\sigma \in X$ and $\tau \subseteq \sigma$. Then, $X_{\tau}$ is a complex, with set of vertices $X(0)\setminus \tau$, of dimension $\mbox{dim}(X)-\mbox{dim}(\tau)-1$. In particular, for a vertex $v$, the link $X_v$ of $v$ is of dimension $d-1$. A cochain $\alpha \in C^i(X,\F_2)$ defines a cochain $\alpha_{v} \in C^{i-1}(X_v,\F_2)$, by $\alpha_v(\sigma \setminus \{v\})=\alpha(\sigma)$ for every $\sigma \in X(i)$ containing $v$.

Throughout this paper we assume that our simplicial complexes are homogenous in the following sense: the structure of $X_v$ is independent of $v$, i.e., the links of all the vertices are isomorphic. In particular $|X_v(d-1)|$ is independent of $v$. Under this assumption we have:

\begin{lemma}
\label{lemma-loc-min}
For $\alpha \in C^i(X,\F_2)$, $||\alpha||=\frac{1}{|X(0)|}\sum_{v \in X_0}||\alpha_v||$.
\end{lemma}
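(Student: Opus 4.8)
The plan is a straightforward double-counting argument, once one unwinds the definition of the norm on the link. First I would record how the link norm is computed. For a vertex $v$, the complex $X_v$ has dimension $d-1$, so for $\rho \in X_v(i-1)$ the weight is $w_v(\rho) = c_v(\rho)/\left(\binom{d}{i}|X_v(d-1)|\right)$, where $c_v(\rho)$ is the number of $(d-1)$-cells of $X_v$ containing $\rho$. The key observation is the bijection $\rho \mapsto \rho \cup \{v\}$ between $i$-cells of $\alpha_v$ and $i$-cells $\sigma \in \alpha$ with $v \in \sigma$, and under it $(d-1)$-cells of $X_v$ containing $\rho$ correspond exactly to $d$-cells of $X$ containing $\sigma$; hence $c_v(\rho) = c(\sigma)$ for $\sigma = \rho \cup \{v\}$. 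Therefore
$$\|\alpha_v\| = \frac{1}{\binom{d}{i}|X_v(d-1)|}\sum_{\substack{\sigma \in \alpha \\ v \in \sigma}} c(\sigma),$$
using the homogeneity assumption that $|X_v(d-1)|$ is independent of $v$.

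Next I would sum over all $v \in X(0)$ and switch the order of summation. Since each $\sigma \in X(i)$ has exactly $i+1$ vertices, this gives
$$\sum_{v \in X(0)} \|\alpha_v\| = \frac{i+1}{\binom{d}{i}|X_v(d-1)|}\sum_{\sigma \in \alpha} c(\sigma).$$
Comparing with $\|\alpha\| = \sum_{\sigma \in \alpha} c(\sigma)/\left(\binom{d+1}{i+1}|X(d)|\right)$, the lemma reduces to the numerical identity
$$\frac{i+1}{|X(0)|\,\binom{d}{i}\,|X_v(d-1)|} = \frac{1}{\binom{d+1}{i+1}\,|X(d)|}.$$
I would prove this in two pieces. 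The binomial identity $(i+1)\binom{d+1}{i+1} = (d+1)\binom{d}{i}$ (clear from $\binom{d+1}{i+1} = \frac{d+1}{i+1}\binom{d}{i}$) reduces the claim to $(d+1)|X(d)| = |X(0)|\cdot|X_v(d-1)|$. This last equality is exactly the double count of incident pairs $(v,\tau)$ with $v \in X(0)$, $\tau \in X(d)$, $v \in \tau$: counting by $\tau$ gives $(d+1)|X(d)|$ since each facet has $d+1$ vertices, and counting by $v$ gives $|X(0)|\cdot|X_v(d-1)|$ since the $d$-cells through $v$ are in bijection with $(d-1)$-cells of $X_v$, whose number is independent of $v$ by homogeneity.

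There is no genuine obstacle here; the only thing to be careful about is keeping the dimension shifts on the link straight (a cochain $\alpha \in C^i(X)$ restricts to $\alpha_v \in C^{i-1}(X_v)$, and $X_v$ has dimension $d-1$), and matching the combinatorial factors $\binom{d+1}{i+1}$ versus $\binom{d}{i}$ correctly. I would present the argument in the order above: link-norm formula, summation and interchange, then the binomial identity plus the incidence double count.
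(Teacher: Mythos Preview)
Your proposal is correct and is essentially the same argument as the paper's: both use the identification $c_{X_v}(\rho)=c_X(\rho\cup\{v\})$, sum over vertices and switch the order of summation, and then invoke the incidence double count $(d+1)|X(d)|=|X(0)|\,|X_v(d-1)|$ together with the binomial identity $(i+1)\binom{d+1}{i+1}=(d+1)\binom{d}{i}$. The only cosmetic difference is that you isolate the numerical identity at the end, whereas the paper folds these substitutions into a single chain of equalities.
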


\begin{proof}
By our assumption $|X(d)| = \frac{1}{d+1}|X(0)||X_v(d-1)|$.
Now
\begin{eqnarray}
\sum_{v \in X(0)}||\alpha_v|| & = & \sum_{v \in X(0)}\sum_{\sigma \in \alpha_v} w(\sigma) = \sum_{v \in X(0)}\sum_{\sigma \in \alpha_v} \frac{c_{X_v}(\sigma)}{{d \choose i} |X_v(d-1)|} \\
                             & =  & \sum_{\sigma \in \alpha}\sum_{v \in \sigma} \frac{c_{X}(\sigma)}{{d \choose i} \frac{d+1}{|X(0)|} |X(d)|} = \sum_{\sigma \in \alpha}\frac{(i+1)c_X(\sigma)|X(0)|}{{d \choose i}(d+1)|X(d)|} \\
                             &  = & |X(0)|\sum_{\sigma \in \alpha}\frac{c_X(\sigma)}{{d+1 \choose i+1}|X(d)|} = |X(0)| \cdot ||\alpha||.
\end{eqnarray}
\end{proof}

Let us now discuss few notions of minimality.

\begin{definition}\label{def-loc-minimal}.

\begin{enumerate}

\item A cochain $\alpha \in C^i(X,\F_2)$ is called {\em minimal} if it is of minimal norm in its class modulo $B^i(X,\F_2)$, i.e., $||\alpha || \leq ||\alpha + \delta_{i-1}\gamma ||$ for every $\gamma \in C^{i-1}$. This is equivalent to $||\alpha|| = \mbox{dist}(\alpha, B^i)$ where the distance between a vector $\alpha$ and a subspace $W$ is defined as $\mbox{dist}(\alpha,W) = \mbox{min}\{||\gamma|| \mbox{ } | \mbox{ } \alpha + \gamma \in W\}$.


\item A cochain $\alpha \in C^0(X,\F_2)$ will be called {\em locally minimal} if it is minimal while for $i \geq 1$, $\alpha\in C^i(X,\F_2)$ is called {\em locally minimal} if for every vertex $v$ of $X$, $\alpha_v$ is a minimal $(i-1)$-cochain in $C^{i-1}(X_v,\F_2)$.
\end{enumerate}
\end{definition}

Every minimal cochain is locally minimal, but not  every locally minimal cochain is minimal\cite{BenAriVishne}. To prove coboundary expansion, one can, in principle, consider only minimal cochains. But, in our work, it is crucial that Theorem~\ref{thm-isoperimetric -inequalities} is proved for the more general case of locally minimal cochain. This is used in an essential way in Section~\ref{section-main-proof} to deduce topological expansion for the $Y_a$'s of Theorem~\ref{thm-main} from the isoperimetric inequalities proved for the $X_a$'s. Every $\alpha \in C^i$ is equivalent modulo $B^i$ to a locally minimal one, in fact, even one which is not too far from it.

\begin{proposition}\label{prop-loc-min-properties}
Assume $X$ is a finite homogeneous pure $d$-dimensional complex. In particular, every $v \in X(0)$ lies in $m(i)$ $i$-simplicies. Then:
\begin{enumerate}
\item\label{item-one-prop-loc-min-properties} For every $\alpha \in C^i(X,\F_2)$, there exists a locally minimal $\alpha' \in C^i(X,\F_2)$ with $\alpha' \equiv \alpha \mbox{mod }B^i(X,\F_2))$, $||\alpha'|| \leq  ||\alpha||$ and $\alpha' = \alpha +\delta_{i-1}(\gamma)$ where $\gamma \in C^{i-1}(X,\F_2)$ with $||\gamma|| \leq c ||\alpha||$, where $c=\frac{d+1-i}{i+1}m(i-1)$.
\item\label{item-two-prop-loc-min-properties} If for some $i \leq d$, $c(\sigma)$ (see Equation (\ref{eqarray-c(sigma)}) in Section~\ref{subsection:coboundary-exp-simp-complexes}) is constant on the simplicies $\sigma \in X(i)$, then for $\alpha \in C^i(X,\F_2)$, $||\alpha||$ is the normalized counting norm, i.e., $||\alpha|| = \frac{|\alpha|}{|X(i)|}$. If $\alpha$ is also locally minimal, then for every vertex $v \in X(0)$, if we consider $\alpha_v$ as a set of (i-1)- cells in $X_v(i-1)$, we have $|\alpha_v| \leq \frac{|X_v(i-1)|}{2}$.
\end{enumerate}
\end{proposition}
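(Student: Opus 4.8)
The proof splits into the two assertions; I would establish the second one first, since it is essentially a bookkeeping computation, and then the first, which requires a descent argument.

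\textbf{Part (2).} If $c(\sigma)$ is constant on $X(i)$, then by~(\ref{eqarray-w(sigma)}) the weight $w(\sigma)$ is the same for all $\sigma\in X(i)$, and since $\sum_{\sigma\in X(i)}w(\sigma)=1$ this common value is $1/|X(i)|$; hence $\|\alpha\|=|\alpha|/|X(i)|$. The same holds inside every link: since $c_{X_v}(\rho)=c_X(\rho\cup\{v\})$ for $\rho\in X_v(i-1)$, constancy of $c_X$ on $X(i)$ forces constancy of $c_{X_v}$ on $X_v(i-1)$, so the $X_v$-norm on $C^{i-1}(X_v,\F_2)$ is also the normalized counting norm. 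Now let $\alpha$ be locally minimal, so $\alpha_v$ is a minimal $(i-1)$-cochain of $X_v$ for every vertex $v$. Applying the minimality of $\alpha_v$ to the coboundary $\delta(\mathbf{1}_{\tau'})$ of the indicator of a single $(i-2)$-cell $\tau'$ of $X_v$ (for $i=1$, read $\tau'=\emptyset$ and $\delta(\mathbf{1}_\emptyset)=\mathbf{1}$) gives $\|\alpha_v\|_{X_v}\le\|\alpha_v+\delta(\mathbf{1}_{\tau'})\|_{X_v}$; cancelling the cells not containing $\tau'$, this says that at most half of the $(i-1)$-cells of $X_v$ containing $\tau'$ lie in $\alpha_v$. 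Summing this over all $\tau'\in X_v(i-2)$ and double-counting incidences (each $(i-1)$-cell of $X_v$ has exactly $i$ faces of dimension $i-2$) yields $i\,|\alpha_v|\le\tfrac{i}{2}\,|X_v(i-1)|$, i.e.\ $|\alpha_v|\le |X_v(i-1)|/2$.

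\textbf{Part (1).} The plan is a greedy descent on the norm $\|\cdot\|$. As long as $\alpha$ is not locally minimal, pick a vertex $v$ for which $\alpha_v$ is not minimal in $C^{i-1}(X_v,\F_2)$, and pick $\mu\in C^{i-2}(X_v,\F_2)$, of smallest norm among those that do it, such that $\alpha_v+\delta^{X_v}\mu$ is a minimal representative of its coset; in particular $\|\alpha_v+\delta^{X_v}\mu\|_{X_v}<\|\alpha_v\|_{X_v}$. Lift $\mu$ to $\hat\mu\in C^{i-1}(X,\F_2)$ by setting $\hat\mu(\tau)=\mu(\tau\setminus\{v\})$ when $v\in\tau$ and $\hat\mu(\tau)=0$ otherwise, replace $\alpha$ by $\alpha+\delta_{i-1}\hat\mu$, and accumulate $\hat\mu$ into $\gamma$. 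A direct computation (in the spirit of the proof of Lemma~\ref{lemma-loc-min}) shows that $\delta_{i-1}\hat\mu$ is supported on $i$-cells containing $v$ and that its restriction to the link $X_v$ equals $\delta^{X_v}_{i-2}\mu$. Moreover, the ratio $w_X(\rho\cup\{v\})/w_{X_v}(\rho)$ is a positive constant $\lambda$ independent of $\rho\in X_v(i-1)$ (and of $v$), since numerator and denominator share the common factor $c_X(\rho\cup\{v\})=c_{X_v}(\rho)$ and differ only by a fixed normalizing constant. As the two cochains differ only on $i$-cells through $v$, and on those the symmetric difference is precisely the image of $\delta^{X_v}\mu$ under $\rho\mapsto\rho\cup\{v\}$ with weights rescaled by $\lambda$, we get
\[
\|\alpha+\delta_{i-1}\hat\mu\|-\|\alpha\| \;=\; \lambda\bigl(\|\alpha_v+\delta^{X_v}\mu\|_{X_v}-\|\alpha_v\|_{X_v}\bigr)\;<\;0 .
\]
Thus every move strictly decreases $\|\cdot\|$; since there are only finitely many cochains, the process terminates in a locally minimal $\alpha'$ with $\|\alpha'\|\le\|\alpha\|$ and $\alpha'=\alpha+\delta_{i-1}\gamma$, where $\gamma$ is the sum of the $\hat\mu$'s used.

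It remains to bound $\|\gamma\|\le\sum\|\hat\mu\|$ by $c\,\|\alpha\|$. Each $\hat\mu$ is supported on $(i-1)$-cells through its vertex $v$, and the same weight-proportionality gives $\|\hat\mu\|_X=\lambda'\|\mu\|_{X_v}$ for a constant $\lambda'$ with $\lambda'/\lambda=i/(i+1)$; one then bounds the total cost $\sum\|\mu\|_{X_v}$ against $\|\alpha\|$ using the minimality of the chosen $\mu$'s together with the incidence identities $i\,|X(i-1)|=|X(0)|\,m(i-1)$ and $(i+1)\,|X(i)|=|X(0)|\,m(i)$, extracting the stated value of $c=\tfrac{d+1-i}{i+1}m(i-1)$. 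Organizing the descent — for instance by processing vertices in a fixed order and keeping track of how often each $(i-1)$-cell is touched — so that $\sum\|\hat\mu\|$ telescopes against $\|\alpha\|$ rather than being merely finite is the main technical point here; the weight-proportionality identity displayed above is the tool that makes it work, since it transports all estimates between $X$ and a single vertex link without distortion.
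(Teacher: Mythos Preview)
Your Part~(2) is correct and essentially the paper's argument; the paper phrases the last step as a pigeonhole contradiction (if $|\alpha_v|>|X_v(i-1)|/2$ then some $\tau'$ witnesses non-minimality) rather than your direct double count, but the two are equivalent. Your descent in Part~(1) is likewise correct and matches the paper: lift a local correction through a bad vertex, check that $\delta_{i-1}\hat\mu$ is supported on $i$-cells through $v$ and restricts to $\delta^{X_v}\mu$ there, conclude that the global norm drops strictly, and terminate by finiteness.

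The gap is the bound $\|\gamma\|\le c\,\|\alpha\|$. You do not prove it: you list weight proportionality, incidence identities, and an unspecified ``telescoping'' as ingredients, and then explicitly say that organizing the descent so that these combine is ``the main technical point here''. That is exactly the point you were asked to supply. The paper's argument is far more elementary and uses none of this machinery. One simply observes that every value of $\|\cdot\|$ on $C^i$ is an integer multiple of $\bigl(\binom{d+1}{i+1}|X(d)|\bigr)^{-1}$, so each strictly decreasing step lowers the norm by at least that quantum; hence the total number of steps is at most $\binom{d+1}{i+1}|X(d)|\cdot\|\alpha\|$. In each step the contribution $\hat\mu$ to $\gamma$ is supported on the $(i-1)$-cells through a single vertex, so its norm is bounded by $m(i-1)\big/\bigl(\binom{d+1}{i}|X(d)|\bigr)$. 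Multiplying step count by per-step cost and using $\binom{d+1}{i+1}\big/\binom{d+1}{i}=(d+1-i)/(i+1)$ gives exactly $c=\tfrac{d+1-i}{i+1}\,m(i-1)$. No ordering of vertices, no telescoping, and no tracking of how often individual cells are touched is needed; your more elaborate plan is both unnecessary and, as written, incomplete.
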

Note, if $X$ is homogenous them $c(v)$ is constant on vertices but not necessarily so for $i$-cells.

\begin{proof} If $\alpha$ is locally minimal there is nothing to prove. If not, then for some $v$, $||\alpha_v +\gamma|| < ||\alpha_v||$ for some $\gamma\in B^{i-1}(X_v,\F_2)$. Define $\tilde{\gamma} \in C^i(X,\F_2)$ by $\tilde{\gamma}(\sigma)=0$ if $v \notin \sigma$ and $\tilde{\gamma}(\sigma) = \gamma(\sigma \setminus \{v\})$ if $v \in \sigma$, where $\sigma \in X(i)$. One can easily check that $\tilde{\gamma}_v =\gamma$. As $\gamma\in B^{i-1}(X_v,\F_2)$, we have $\tilde{\gamma} \in B^i(X,\F_2)$, in fact, if $\gamma=\delta_{i-2}(\eta)$ for some $\eta \in X_v(i-2)$, then $\tilde{\gamma}=\delta_{i-1}(\tilde{\eta})$. Now, replace $\alpha$ by $\alpha+\tilde{\gamma}$. By doing so, $\alpha+\tilde{\gamma} \equiv \alpha (\mbox{mod }B^i(X,\F_2))$. Moreover, $||\alpha+\tilde{\gamma}|| < ||\alpha||$. It is clear that $||(\alpha+\tilde{\gamma})_v|| < ||\alpha_v||$, but some care is needed (and we can not just apply Lemma~\ref{lemma-loc-min}) as $\tilde{\gamma}$ also influences other vertices. But, adding $\tilde{\gamma}$ changes the value of $\alpha$ only on simplicies which contain the vertex $v$, and on them it decreases their contribution to the norm of $\alpha$, i.e.,
$$\sum_{v \in \sigma \in \alpha+\tilde{\gamma}}w(\sigma) < \sum_{v \in \sigma \in \alpha}w(\sigma),$$ and hence $||\alpha+\tilde{\gamma}|| < ||\alpha||$.

The above process terminates since $||\alpha||$ can get only finitely many values, so eventually we replace $\alpha$ by a locally minimal cochain in the same class modulo $B^i(X,\F_2)$. In fact, the process terminates after at most ${d+1 \choose i+1}|X(d)| \cdot ||\alpha||$ steps, since for every $i$-cochain in $C^i(X,\F_2)$, its norm is an integral multiply of $\frac{1}{{d+1 \choose i+1}|X(d)|}$.

In each step the local change is by $\tilde{\gamma} = \delta(\tilde{\eta})$, and the norm of $\tilde{\eta}$ is at most $\frac{m(i-1)}{{d+1 \choose i}|X(d)|}$. The number of steps is at most ${d+1 \choose i+1}|X(d)| \cdot ||\alpha||$ and so the total change $\gamma$ is of norm at most $\frac{d+1-i}{i+1}m(i-1) ||\alpha||$.

\bigskip

For the proof of the second part, note first that $w(\sigma)$ is constant on $X(i)$ and $\sum_{\sigma \in X(i)}w(\sigma)=1$ and hence the norm on $C^i(X,\F_2)$ is simply the normalized counting norm. Now, we also have that all the ($i-1$)-cells of $X_v$ have the same weight in $X_v$. If $\alpha \in C^i(X,\F_2)$ is locally minimal and for some $v \in X(0)$, $\alpha_v$ contains more than half of the $(i-1)$-cells of $X_v$, then for some $\tau \in X_v(i-2)$, $\alpha_v$ contains more than half of the $(i-1)$-cells in $X_v$ containing $\tau$. This implies that $||\alpha_v + \delta_{i-2}(\tau)|| < ||\alpha_v||$ in contradiction to the minimality of $\alpha_v$, i.e., in contradiction to the local minimality of $\alpha$.
\end{proof}

\remove{
\subsection{Geometric and topological overlapping}
\tnote{remove this subsection?}
We end this section by defining the geometric and topological overlapping constants of $X$ and relate them to the above expansion constants.

\begin{definition}
Let $X$ be a finite $d$-dimensional simplicial complex.
\begin{enumerate}
\item Define $c_{geom}(X)$ as the largest real number satisfying: For every map $f:X(0) \rightarrow \R^d$, there exists a point $z \in \R^d$ which is covered by $c_{geom}(X)|X(d)|$ of the images of the $d$-cells of $X$, under $\tilde{f}$, where $\tilde{f}:X \rightarrow \R^d$ is the unique affine extension of $f$.
\item Define $c_{top}(X)$ similarly. This time $\tilde{f}$ is any continuous extension of $f$.
\item A family of $d$-dimensional simplicial complexes $\{X_a\}_{a \in A}$ form a family of {\em geometric} (resp. {\em topological}) expanders if there exists $\epsilon> 0$ such that $c_{geom}(X_a) > \epsilon$ (resp.  $c_{top}(X_a) > \epsilon$) for every $a \in A$.
\end{enumerate}

\end{definition}

The invariant $c_{geom}(X)$  can be evaluated by the eigenvalues of the higher dimensional Laplacians of $X$~\cite{Parzanchevsky}, or by eigenvalues of various bipartite graphs associated to $X$ if $X$ has type function (see~\cite{FGLNP},~\cite{EvraGLubotzky}).
A deep result of Gromov~\cite{Gromov} gives a way to bound $c_{top}(X)$ by means of $\epsilon_i(X)$ ($i=0,\cdots, d-1$). In particular, it says  that coboundary expanders are topological expanders. We actually need a stronger version of that result (proved in~\cite{KaufmanWagner}), for which we need another definition.

{\bf Definition~\ref{def-systole}}  {\em For a finite $d$-dimensional simplicial complex $X$, let the {\em $i$-cohomological systole} of $X$,$\mbox{syst}^i(X)$ be:
$$\mbox{syst}^i(X) = \mbox{min}\{||\alpha|| \mbox{ } | \alpha \in Z^i(X,\F_2)\setminus B^i(X,\F_2) \}.$$
(Write $\mbox{syst}^i(X)=\infty$ if $H^i(X,\F_2)=0$.) }

{\bf Theorem~\ref{thm-Gromov-criteria-for-top-exp}}
{\em Let $X$ be a finite $d$-dimensional simplicial complex of dimension $d$ and $0 < \mu, \eta \in \R$. Assume
\begin{enumerate}
\item\label{item:thm:Gromov-systolic:first} For every $0 \leq i \leq d-1$, $\mu_i(X) \leq \mu$.
\item\label{item:thm:Gromov-systolic:second} For every $0 \leq i \leq d-1$, $\mbox{syst}^i(X) \geq \eta$.
\end{enumerate}
Then there exists $c=c(d,\mu,\eta)>0$ so that $c_{top}(X) \geq c$.}

In particular, Theorem~\ref{thm-Gromov-criteria-for-top-exp} says that cocycle expanders with large systole are topological expanders.

The reader is referred to~\cite{KaufmanWagner} for a more precise formulation and for an evaluation of $c$. In fact, $\R^d$ can be replaced by any $\Z_2$ manifold, see there for details.

For example, for $\Delta_{n,d}$ the complete $d$ dimensional complex on $n$ points, then for $i < d$ $H^i=0$, $\epsilon_i \geq 1 - o(1)$~\cite{Gromov,LinialMeshulam}, and hence $\mu_i \leq 1+o(1)$. Thus, these complexes are topological expanders. Similarly, finite spherical buildings of dimension $d$ are coboundary expanders have the topological overlapping property~\cite{Gromov,LubotzkyMeshulamMozes}, and hence are also topological expanders.
}

\section{Buildings and Ramanujan complexes}
\label{section-buildings}

In this section we review some notations and results on buildings and their quotients and prepare some technical results to be used later. We start with spherical buildings.

\subsection{Spherical buildings}\label{subsection-spherical-buildings}
Let $W=\F_q^r$ be an $r$-dimensional vector space over the finite field of order $q$. Denote by $S(r,q)$ the spherical building associated with $PGL_r(\F_q)$, i.e., the flag complex of $\F_q^r$. This is the simplicial complex whose vertices are all the non-zero proper subspaces of $W$, and $i+1$ such subspaces $u_0, \cdots, u_i$ form an $i$-cell if $u_0 < u_1< \cdots < u_i$. This is a finite simplicial complex of dimension $r-2$, which is known to be homotopic to a bouquet of $(r-2)$-dimensional spheres. In particular, $H^i(S(r,q),\F_2)=0$ for every $i=1,\cdots, r-3$. It was shown by Gromov that these complexes have the overlapping properties~\cite[p. 457]{Gromov}, showing along the way that they are coboundary expanders.
%
%
%
%
%

\begin{thm}\label{prop-S(4,q)} There exists some constant $\epsilon(r) > 0$ such that $\epsilon_i(S(r,q)) \geq \epsilon(r)$ for every $i=0,\cdots,r-1$ and every prime power $q$.
\end{thm}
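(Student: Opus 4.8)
The plan is to argue by induction on $r$, expressing the coboundary expansion of $S(r,q)$ in terms of that of its links, which are simplicial joins of strictly smaller spherical buildings; the only genuine difficulty is to keep every constant independent of $q$. As a first, $q$-free, observation I would invoke the Solomon--Tits theorem: $S(r,q)$ is homotopy equivalent to a wedge of $(r-2)$-spheres, so $H^i(S(r,q),\F_2)=0$ (equivalently $B^i=Z^i$) for all $0\le i\le r-3$; hence, $S(r,q)$ being finite, $\epsilon_i(S(r,q))>0$ for each such $i$ and each $q$, and $\mathrm{syst}^i(S(r,q))=\infty$ in this range. The content of the theorem is therefore that $\epsilon_i(S(r,q))$ does not tend to $0$ as $q\to\infty$.

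I would dispose of the degree $i=0$ case --- which is also the base $r=3$ of the induction, where it is the only nonvacuous degree --- by spectral means. Up to the bounded factor coming from the weight function, $\epsilon_0(S(r,q))$ is the normalized Cheeger constant of the $1$-skeleton of $S(r,q)$, namely the graph whose vertices are the nonzero proper subspaces of $\F_q^r$ and whose edges join comparable subspaces (connected for $r\ge 3$). This graph has degree $\Theta_r(q)$ and, being governed by the incidence algebra of $PGL_r(\F_q)$, all of its nontrivial eigenvalues are $O_r(\sqrt q)$; decomposing it along the type function into bipartite biregular pieces and applying Proposition~\ref{prop-cheeger} and Proposition~\ref{prop-mixing-for-bipartite} gives $\epsilon_0(S(r,q))\ge c_0(r)>0$ with $c_0(r)$ independent of $q$.

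For $i\ge 1$ (nonvacuous only once $r\ge 4$) I would use the link structure. The link in $S(r,q)$ of a $k$-dimensional subspace $u$ ($1\le k\le r-1$) is the join $S(k,q)*S(r-k,q)$: the flags inside $u$, joined freely with the flags of $\F_q^r/u$. By the inductive hypothesis both factors are coboundary expanders with a uniform-in-$q$ positive constant, and by the standard lemma that a join of coboundary expanders is again a coboundary expander with a constant depending only on those of the factors (see \cite{MeshulamWallach,LubotzkyMeshulamMozes}; by the K\"{u}nneth formula for joins the $\F_2$-cohomology of $S(k,q)*S(r-k,q)$ sits only in its top degree $r-3$, so nothing goes wrong in the degrees that matter, and only the extreme values of $k$, where a factor is $0$-dimensional, need to be checked slightly by hand), every vertex link of $S(r,q)$ is a coboundary expander with a uniform positive constant in all degrees $0\le j\le r-4$. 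Feeding this, together with the spectral expansion of the skeleton graphs from the previous step, into a local-to-global (localization / Garland-type) argument then upgrades it to $\epsilon_i(S(r,q))\ge\epsilon(r)>0$ for all $1\le i\le r-3$, uniformly in $q$; here the step that is normally delicate over $\F_2$, bounding the cohomological systole of the global complex, is free since $\mathrm{syst}^i(S(r,q))=\infty$ in this range.

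The step I expect to be the main obstacle is precisely this $q$-uniformity: for a single $q$ the complex is finite and $\epsilon_i>0$ is automatic, so what must be engineered is that none of the three constants --- the spectral gap of the flag graphs, the join constant, and the loss in the local-to-global descent --- deteriorates as $q\to\infty$. The spectral input is classical, the join lemma is purely combinatorial and $q$-independent, and the vanishing systole removes the usual obstruction in the descent, so the final constant can be taken to depend on $r$ only. As an alternative to the descent one can, following Gromov~\cite{Gromov}, construct explicit cone/filling operators directly on the subspace lattice of $\F_q^r$ (viewed as a geometric lattice); there the same difficulty resurfaces as the need to make the resulting filling estimates uniform, which is handled by averaging over the action of $PGL_r(\F_q)$, transitive on full flags.
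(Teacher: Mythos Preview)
The paper does not prove this theorem; it defers to~\cite{LubotzkyMeshulamMozes}. The approach there is essentially your \emph{alternative}: one constructs explicit contracting (cone/filling) operators on the order complex of the subspace lattice and makes the resulting bounds $q$-uniform by averaging over the transitive action of $PGL_r(\F_q)$ on maximal flags. So the part of your proposal that matches the cited proof is your last paragraph, not your main inductive local-to-global argument.

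That main argument has a genuine gap at the localization step. Calling it ``Garland-type'' is misleading: Garland's method is spectral and works over $\R$; it yields vanishing of $H^i(\,\cdot\,,\R)$ from link spectral gaps, but does not produce $\F_2$-coboundary expansion. What you actually need is a theorem of the form ``good $\F_2$-coboundary expansion of all vertex links, plus a large enough spectral gap of the skeleton graphs, implies $\epsilon_i \ge \epsilon$ for the global complex,'' and the versions of this that exist typically give only \emph{cocycle} expansion, which you then upgrade via Solomon--Tits. You correctly observe that the upgrade is free here, but you have not supplied the local-to-global theorem itself, nor checked that its spectral threshold is met uniformly in $q$. This is not a standard black box at the level of this paper's references --- indeed, Sections~\ref{section-two-dim-proof}--\ref{section-three-dim-coboundary-exp} of the present paper are precisely a hands-on, case-by-case execution of such a local-to-global passage for the Ramanujan complexes. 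If you want the inductive route to be rigorous you must either cite a concrete theorem and verify its hypotheses uniformly in $q$, or reproduce an argument of that type directly for $S(r,q)$; at that point the direct cone-operator proof of~\cite{LubotzkyMeshulamMozes} is both shorter and gives cleaner constants.
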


For a proof of Theorem~\ref{prop-S(4,q)} see~\cite{LubotzkyMeshulamMozes}. We will need only the case $r=4$ and $i=1$. But, we will need few more facts on $S(r,q)$ for small values of $r$.
Let $S(r,q)^{(1)}$ be the $1$-skeleton of $S(r,q)$, i.e., the graph whose vertices are the non-zero proper subspaces of $\F_q^r$ with two such subspaces are incidence iff one is contained in the other. For $r=3$ this is the well studied "points versus lines graph" of the projective plane. This is a $(q+1)$-regular graph whose eigenvalues (of the adjacency matrix) are $\pm(q+1)$ and $\pm\sqrt{q}$, the later with high multiplicity. In particular, these are Ramanujan graphs (of unbounded degree). For $r > 3$, $S(r,q)^{(1)}$ is not regular anymore. Let us look closely at the case $r=4$.


Let $Z=S(4,q)^{(1)}$ be the $1$-skeleton of the spherical building of $\F_q^4$, i.e., $Z$ is the graph whose set of vertices is $M=M_1 \cup M_2 \cup M_3$ where $M_i$ is the set of subspaces of $\F_q^4$ of dimension $i$. Note that $|M_1| = |M_3|=\frac{q^4-1}{q-1} = q^3+q^2+q+1$ while $|M_2|=\frac{\frac{q^4-1}{q-1} \cdot \frac{q^3-1}{q-1} }{q+1}\sim q^4$. Two vertices are connected by an edge if one subspace is contained in the other. One easily checks that
every vertex in $M_1$ (resp. $M_3$) is connected with $q^2+q+1$ vertices in $M_2$ and with $q^2+q+1$ vertices in $M_3$ (resp. $M_1$), so its degree is $2(q^2+q+1)$. On the other hand, the degree of a vertex in $M_2$ is $2(q+1)$, half of them go to $M_1$ and half to $M_3$.

The following technical lemma will be needed in Section~\ref{section-three-dim-coboundary-exp}.
\begin{lemma}~\label{lemma-neighborhood-of-thin-vertex-defs} Let $T=T_1 \cup T_2 \cup T_3 \subseteq M$ be a subset of the vertices of $Z$ with $T_i \subseteq M_i$. Assume that with every $t \in T$, a set of edges $E(t)$, coming from $t$, is given and let $\tilde{E}=\bigcup_{t \in T}E(t)$. Assume also
\begin{itemize}
\item $|T_1|, |T_3| \leq q^{2.75}$ and $|T_2| \leq q^{3.7}$.
\item for $t \in T_1 \cup T_3$, $|E(t)| > q^{1.8}$ and for $t \in T_2$, $|E(t)| > q^{0.9}$.
\end{itemize}

Then,
\begin{eqnarray}~\label{eqnarray-lemma-neighborhood-of-thin-vertex-defs}
\frac{|E(T,T)|}{|\tilde{E}|} = o_q(1) .
\end{eqnarray}
I.e, there exists $\epsilon(q)$ with $\epsilon(q) \rightarrow 0$ when $q \rightarrow \infty$, s.t. for every choice of $T$ and $\{E(t) | t \in T \}$ as above, $\frac{|E(T,T)|}{|\tilde{E}|} \leq \epsilon(q)$.
\end{lemma}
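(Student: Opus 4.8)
The plan is to split the edge set $E(T,T)$ along the tripartition $M=M_1\cup M_2\cup M_3$ and estimate each bipartite piece with the Mixing Lemma for bi-regular graphs (Proposition~\ref{prop-mixing-for-bipartite}), while bounding $|\tilde E|$ from below by a one-line double count. Since $Z=S(4,q)^{(1)}$ has edges only between subspaces of different dimensions, $E(T,T)=E(T_1,T_2)\cup E(T_2,T_3)\cup E(T_1,T_3)$ (a disjoint union). The relevant bipartite graphs are: $(M_1,M_2)$, which is $(q^2+q+1,\,q+1)$-bi-regular on classes of sizes $|M_1|=(q+1)(q^2+1)$ and $|M_2|=(q^2+1)(q^2+q+1)$; $(M_2,M_3)$, isomorphic to $(M_1,M_2)$ via $v\mapsto v^{\perp}$; and $(M_1,M_3)$, which is $(q^2+q+1)$-bi-regular on two classes of size $(q+1)(q^2+1)$. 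The one nontrivial input I need is that each of these has second adjacency eigenvalue $\lambda=O(q)$; this follows from a short computation, since for the bipartite adjacency operator $N$ the matrix $NN^{*}$ (read on the $M_1$-, resp.\ $M_3$-, side) has the form $aI+bJ$ — for two distinct subspaces of the same dimension the number of incident subspaces of the neighbouring dimension depends only on the dimension of their span — with $a=q^2$ for $(M_1,M_3)$ and $a=q^2+q$ for $(M_1,M_2)$, so the nonzero singular values of $N$ other than $\sqrt{k'k''}$ all equal $\sqrt a=O(q)$.

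Next I would bound $|\tilde E|$ from below. Because every edge of $Z$ has exactly two endpoints and $e\in E(t)$ forces $t$ to be an endpoint of $e$ lying in $T$, each $e\in\tilde E$ lies in at most two of the sets $E(t)$, so $\sum_{t\in T}|E(t)|\le 2|\tilde E|$. Invoking the hypotheses $|E(t)|>q^{1.8}$ for $t\in T_1\cup T_3$ and $|E(t)|>q^{0.9}$ for $t\in T_2$ gives $|\tilde E|\ge \tfrac12 S$, where $S:=q^{1.8}\big(|T_1|+|T_3|\big)+q^{0.9}|T_2|$.

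Now I would apply Proposition~\ref{prop-mixing-for-bipartite} to each pair, the goal being to show each of the three contributions is $o_q(1)\cdot S$ with an $o_q(1)$ depending on $q$ only. For $(T_1,T_2)$ the lemma yields $|E(T_1,T_2)|\le \frac{|T_1||T_2|}{q^2+1}+\lambda\sqrt{|T_1||T_2|}$; the main term is at most $q^{1.7}|T_1|=q^{-0.1}\cdot q^{1.8}|T_1|$ since $|T_2|\le q^{3.7}$, and for the error term weighted AM--GM, $\sqrt{|T_1||T_2|}\le\tfrac12\big(q^{1/2}|T_1|+q^{-1/2}|T_2|\big)$, gives $\lambda\sqrt{|T_1||T_2|}=O\big(q^{3/2}|T_1|+q^{1/2}|T_2|\big)=o_q(1)\cdot S$. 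The pair $(T_2,T_3)$ is identical. For $(T_1,T_3)$ the main term is at most $2q^{1.75}|T_1|=2q^{-0.05}\cdot q^{1.8}|T_1|$ since $|T_3|\le q^{2.75}$, and the error term is $\lambda\sqrt{|T_1||T_3|}\le\tfrac{\lambda}{2}(|T_1|+|T_3|)=O\big(q(|T_1|+|T_3|)\big)=o_q(1)\cdot S$. Summing the three estimates, $|E(T,T)|\le\epsilon(q)\,S\le 2\epsilon(q)\,|\tilde E|$ with $\epsilon(q)=O(q^{-0.05})\to 0$ and all implied constants absolute, which proves~(\ref{eqnarray-lemma-neighborhood-of-thin-vertex-defs}).

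The step I expect to be the main obstacle is pinning down the spectral gap $\lambda=O(q)$ for the three incidence graphs — i.e.\ verifying the $aI+bJ$ form of $NN^{*}$ and reading off its eigenvalues — together with the bookkeeping that confirms the prescribed exponents $2.75,\ 3.7,\ 1.8,\ 0.9$ leave the needed slack in every one of the main and error terms simultaneously; once those are in hand, the application of the mixing lemma and the AM--GM splitting are routine.
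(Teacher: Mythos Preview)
Your proposal is correct and follows essentially the same approach as the paper: decompose $E(T,T)$ along the tripartition, compute the second eigenvalue of each bipartite incidence graph via the $aI+bJ$ structure of $NN^*$ (the paper does exactly this in Lemma~\ref{lemma-one-spherical-eigenvalues-calc}, getting $\lambda=q$ for $Z_{1,3}$ and $\lambda=\sqrt{q^2+q}$ for $Z_{1,2}$), and then apply the bipartite mixing lemma together with the lower bound $|\tilde E|\ge\tfrac12\sum_t|E(t)|$. The only cosmetic difference is that you handle the $\lambda\sqrt{|A||B|}$ term by weighted AM--GM whereas the paper splits into the two cases $|A|\lessgtr|B|$ (for $Z_{1,3}$) or $q^{1.8}|A|\lessgtr q^{0.9}|B|$ (for $Z_{1,2}$); both arrive at the same $o_q(1)$ bound with comparable exponents.
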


\begin{proof}
As $Z$ is a $3$-partite graph, $E(T,T)=E(T_1,T_2) \cup E(T_2,T_3) \cup E(T_1,T_3)$. It suffices to prove (\ref{eqnarray-lemma-neighborhood-of-thin-vertex-defs}) for each $E(T_i,T_j)$ separately. We can therefore consider the graphs $Z_{i,j}$, $1 \leq i < j \leq 3$ where $Z_{i,j}$ is the bipartite graph whose vertices are $M_i \cup M_j$ and the adjacency relation is as in $Z$. Note that $Z_{1,2}$ and $Z_{2,3}$ are isomorphic and to prove the result for one is like to prove the result for the other. So, we will prove it only for $Z_{1,2}$ and $Z_{1,3}$.

\begin{lemma}~\label{lemma-one-spherical-eigenvalues-calc}
\begin{enumerate}
\item~\label{item-one-lemma-one-spherical-eigenvalues-calc} Let $A$ be the adjacency matrix of the graph $Z_{1,3}$. Then its eigenvalues are $\pm(q^2+q+1)$, each with multiplicity $1$, and $\pm q$ with high multiplicity.
\item~\label{item-two-lemma-one-spherical-eigenvalues-calc} Let $A$ be the adjacency matrix of the graph $Z_{1,2}$. Then, its largest eigenvalue is $\sqrt{(q+1)(q^2+q+1)}$ and the other eigenvalues are either $\pm\sqrt{q^2+q}$ or $0$.
\end{enumerate}
\end{lemma}
\begin{proof}
The matrix $A$ has a block form $A=\left(
                                   \begin{array}{cc}
                                     0 & B \\
                                     B^t & 0 \\
                                   \end{array}
                                 \right)$ and hence $A^2 =\left(
                                   \begin{array}{cc}
                                     BB^t & 0 \\
                                     0 & B^tB \\
                                   \end{array}
                                 \right)$. The eigenvalues of $B^tB$ and $BB^t$ are the same up to multiplicities of zeros. It suffices therefore to analyze $B^tB$. This is the adjacency matrix of the graph $Y$ with vertex set $M_1$ and two subspaces $u$ and $w$ in $M_1$ are connected by $t$ edges if in the original graph there are $t$ paths of length $2$ from $u$ to $w$. Let us now consider separately the two cases.

(\ref{item-one-lemma-one-spherical-eigenvalues-calc}) In $Z_{1,3}$, a subspace $u$ goes to itself in $q^2+q+1$ $2$-paths according to its degree in $Z_{1,3}$. While if $u \neq w$, then $u$ and $w$ are contained in $q+1$ subspaces of dimension $3$. Hence,
$BB^t = (q^2+q+1)I + (q+1)(I-J) = q^2 I + (q+1)J$, where $J$ is the all $1$'s matrix. Now $J$ acts as the zero matrix on $L_0^2(M_1) = \{f:M_1 \rightarrow \R | \sum_{u \in M_1} f(u)=0 \}$ and as $|M_1|I$ on the constant functions. Thus, the eigenvalues of $B^tB$ are $q^2+(q+1)(q^3+q^2+q+1) = (q^2+q+1)^2$ and $q^2$ as claimed, and the same for $BB^t$.

(\ref{item-two-lemma-one-spherical-eigenvalues-calc}) This time $B$ and $B^t$ are not square matrices but the argument is similar. In $Z_{1,2}$ a subspace $u$ in $M_1$ is connected to itself in $q^2+q+1$ $2$-paths. Two different $1$-dimensional subspaces are inside a unique two dimensional subspace and hence $BB^t = (q^2+q+1)I + (J-I) = (q^2+q)I + J$. Arguing as in part one we deduce that the eigenvalues of $BB^t$ are $(q^2+q+1)(q+1)$ and $(q^2+q)$. Thus, the eigenvalues of $A$ are either $\pm \sqrt{(q^2+q+1)(q+1)}$, $\pm\sqrt{q^2+q}$ or $0$.

\end{proof}

%

We are ready now to apply Proposition~\ref{prop-mixing-for-bipartite} for the graphs $Z_{1,3}$ and $Z_{1,2}$. Let us start with $G=Z_{1,3}$. I.e., $A=T_1$, $B=T_3$ and by Lemma~\ref{lemma-one-spherical-eigenvalues-calc} (\ref{item-one-lemma-one-spherical-eigenvalues-calc}), $\lambda(G)=q$. Note, $k'=k''=q^2+q+1 \approx q^2$ and $V'=V'' \approx q^3$. By Proposition~\ref{prop-mixing-for-bipartite},
$$E(A,B) \leq \frac{q^2|A||B|}{q^3} + q \sqrt{|A||B|}  = \frac{|A||B|}{q} + q \sqrt{|A||B|}.$$
On the other hand, up to a factor of $2$, we have
$$|\tilde{E}|\simeq \sum_{t \in T_1 \cup T_3}|E(t)| \geq q^{1.8}|A| + q^{1.8}|B|.$$
Let us separate into two cases: $|A| < |B|$ and $|A| \geq |B|$.
In the first case,
$$ \frac{|E(A,B)|}{|\tilde{E}|} \leq \frac{\frac{|B^2|}{q} + q \sqrt{|B||B|}}{q^{1.8}|B|} =\frac{|B|}{q^{2.8}} + \frac{1}{q^{0.8}}.$$
As $|B|=|T_3|$ was assumed to be less than $q^{2.75}$, the ratio goes to $0$ with $q \rightarrow \infty$ as needed.
The second case, i.e., $|A| \geq |B|$ is symmetric.

Let us now consider the second graph $G=Z_{1.2}$, $A=T_1$, $B=T_2$, $k'=q^2+q+1$, $k''=q+1$, $V' \approx q^3$, $V'' \approx q^4$ and by Lemma~\ref{lemma-one-spherical-eigenvalues-calc}(\ref{item-two-lemma-one-spherical-eigenvalues-calc}), $\lambda(G) \leq 2q$. Thus,
$$E(A,B) \leq \frac{\sqrt{q^3}|A||B|}{\sqrt{q^7}} + 2q \sqrt{|A||B|}  = \frac{|A||B|}{q^2} + 2q \sqrt{|A||B|}.$$
while
$$|\tilde{E}| \simeq \sum_{t \in T_1 \cup T_2}|E(t)| \geq q^{1.8}|A| + q^{0.9}|B|.$$
Again, we separate the evaluation to two cases:
$q^{1.8}|A| <  q^{0.9}|B|$ and $q^{1.8}|A| \geq  q^{0.9}|B|$. In the first case $|A| < q^{-0.9}|B|$, Thus:
$$ \frac{|E(A,B)|}{|\tilde{E}|} \leq \frac{\frac{q^{-0.9} |B|^2 }{q^2} + 2q \sqrt{q^{-0.9}|B||B|}}{q^{0.9}|B|} =
\frac{|B|}{q^{3.8}} + \frac{2q}{q^{1.35}}.$$
As $|B|< q^{3.7}$, this goes to $0$ when $q \rightarrow \infty$. The second case we consider is when $q^{1.8}|A| \geq  q^{0.9}|B|$, so $|B| \leq q^{0.9}|A|$. Thus,
$$ \frac{|E(A,B)|}{|\tilde{E}|} \leq \frac{\frac{q^{0.9} |A|^2 }{q^2} + 2q \sqrt{q^{0.9}|A||A|}}{q^{1.8}|A|} =
\frac{|A|}{q^{2.9}} + \frac{2q^{1.45}}{q^{1.8}}.$$
As $|A|< q^{2.75}$, this goes to $0$ when $q \rightarrow \infty$. Lemma~\ref{lemma-neighborhood-of-thin-vertex-defs} is now proven.
\end{proof}

%

\subsection{Bruhat-Tits buildings and Ramanujan complexes}\label{subsection-Bruhat-Tits buildings}
Let us move now to the Bruhat-Tits buildings. Let $F$ be a non-archimedean local field, i.e., $F$ is either a finite extension of $\Q_p$ or $F=\F_q((t))$, $\calO$ its valuation ring, $m$ the unique maximal ideal in $\calO$, $\pi$ - a generator of $m$ ("uniformaizer"), so $m=\pi \calO$ and $\calO/m=\F_q$.
The Bruhat-Tits building $B=\tilde{A}_d(F)$ is an infinite simplicial complex defined as follows. An $\calO$-lattice $L$ of $V=F^{d+1}$ is a finitely generated $\calO$-submodule of $V$ which spans $V$. Two such lattices $L_1$ and $L_2$ are equivalent if there exists $0 \neq t \in F$ such that $t L_1= L_2$. The vertices of $\tilde{A}_d(F)$ are the equivalence classes of these lattices and $[L_0],[L_1], \cdots, [L_i]$ form an $i$-cell if there exist representatives $L'_i  \in [L_i]$ s.t. $\pi L'_0 < L'_i < \cdots < L'_2< L'_1 < L'_0$. This is a contractible simplicial complex of dimension $d$, upon which the group $G=PGL_{d+1}(F)$ acts and the action is transitive on the vertices. The $1$-skeleton $B^{(1)}$ of $B$ is a $k$-regular graph where $k$ equals the number of non-zero proper subspaces of $\F_q^{d+1}$ (so for $d=1$, $B$ is the ($q+1$)-regular tree and for general $d$, $k=\sum_{i=1}^{d}{d+1 \choose i}_q\approx q^{\frac{(d+1)^2}{4}}$). In fact, the link of every vertex $v$ of $B$ is isomorphic to the spherical building $S(d+1,q)$, which is a finite simplicial complex of dimension $d-1$. The local properties of $B$ can be read, therefore, from $S(d+1,q)$. For example, every $(d-1)$-cell in $B$ is contained in exactly $(q+1)$ $d$-cells of $B$.

The vertices of $B$ come with a coloring $\tau_B$ in $\Z/(d+1)\Z$, defined as follows. Take an $\calO$-basis $\calB$ for a representative $L'$ of $[L]$ and denote $\tau([L])=\mbox{val}(\mbox{det}\calB)(\mbox{mod}(d+1))$. This is well defined and no adjacent vertices have the same color. This coloring is preserved by the action of $G_0 = PSL_{d+1}(F) \cdot  PGL_{d+1}(\calO)$, which is a normal subgroup of index $d+1$ in $G$, but not by that of $G$.
Still, $\tau$ induces a coloring on the oriented edges of $B$: $\tau([L_1],[L_2])= \tau([L_1])-\tau([L_2])(\mbox{mod}(d+1))$, and this coloring of the edges is preserved by $G$. The coloring of the (oriented) edges defines $d$ "Hecke operators" $A_1, \cdots, A_d$ as follows:
For $f \in L^2(B(0))$,
$$A_i(f)(x) = \sum\{f(y) | (x,y) \in B(1), \tau((x,y))=i\}.$$
The operators $A_i$ are normal (though not self adjoint) and commute with each other, hence can be diagonalized simultaneously.

Every cocompact discrete subgroup $\Gamma$ of $G$ acts on $B$ and $X=\Gamma\backslash B$ is a finite complex. For simplicity we will assume that for every vertex $x$ of $B$ and every $1\neq\gamma \in \Gamma$, $\mbox{dist}(\gamma x, x) > 2$. This ensures that there are no ramifications and $\Gamma \backslash B$ is indeed a simplicial complex. This can always be achieved by replacing $\Gamma$ by a finite index subgroup (and by a congruence one if $\Gamma$ is arithmetic).

Since $G$ (and hence $\Gamma$) preserves the coloring of the oriented edges, the operator $A_i$ is well defined also on $L^2(X(0))$. In~\cite{LSV1}, the finite complex $X$ is called Ramanujan if the "non trivial spectrum" of ($A_1, \cdots, A_d$) on $L^2(X(0))$ (which is a subset of $\C^d$) is contained in the spectrum of ($A_1, \cdots, A_d$) acting on $L^2(B(0))$ - see there for exact definitions. The trivial spectrum consists, in general, of at most $d$ eigenvalues. More precisely, if $\Gamma G_0$ is of index $r$ in $G$ then $\Gamma \backslash B$ has $r$ "trivial eigenvalues" (see~\cite[Section 2.3 and Proposition 6.7]{LSV1}. For example, for $d=1$, it has either two trivial eigenvalues, if $\Gamma \backslash B$ is a bipartite graph, or just one, if it is not. Similarly, if $\Gamma \leq G_0$, there are $d+1$ trivial eigenvalues or just one if $\Gamma G_0 = G$. To avoid the trouble of handling the trivial eigenvalues, we will work all the time with "non-partite Ramanujan complexes", i.e., those obtained by lattices $\Gamma$ with $\Gamma G_0 = G$. By~\cite[Theorem 7.1]{LSV1} there are infinitely many such finite quotients $X= \Gamma \backslash B$.

What is important for us here is the following:
$A_1+\cdots +A_d$ is acting on $L^2(X(0))$ exactly as the adjacency matrix of the graph $X^{(1)}$ which is a $k$-regular graph with $k\sim q^{\frac{(d+1)^2}{4}}$.
From the definition of Ramanujan complexes we deduce~\cite{LSV1}:

\begin{cor}~\label{cor-bounding-lambda-in-quotient-Ram-complex}
If $X$ is a non-partite Ramanujan complex, a quotient of $B=\tilde{A}_d(F)$, $d \geq 1$, as above, then the second largest eigenvalue of the adjacency matrix of $X^{(1)}$ is bounded from above by ${d+1 \choose  \lfloor\frac{d+1}{2}\rfloor}\sqrt{k} \leq (d+1)^{d+1}q^{\frac{(d+1)^2}{8}}$ and thus $\lambda_1(X^{(1)}) \geq k-(d+1)^{d+1}\sqrt{k}$, (see Proposition~\ref{prop-cheeger}) so as graphs, for $q$ large w.r.t. $d$, $X^{(1)}$ is almost a Ramanujan graph. If $d=2$ an improved bound is known: $\lambda_1(X^{(1)}) \geq k-6\sqrt{k}$.
\end{cor}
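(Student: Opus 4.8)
The plan is to combine the definition of a Ramanujan complex with the (known) spectral description of the Hecke operators on the building, and finish with an elementary estimate. First I would record what the operator in question is: as explained just above the corollary, the adjacency operator of $X^{(1)}$ acting on $L^2(X(0))$ is $A_1+\cdots+A_d$, and $X^{(1)}$ is $k$-regular with $k=\sum_{i=1}^d\binom{d+1}{i}_q$. Because $X$ is non-partite (i.e. $\Gamma G_0=G$), the operators $A_1,\dots,A_d$ have a single trivial joint eigenvalue on $L^2(X(0))$, carried by the constants, which for $A=\sum_i A_i$ is $k$ itself; hence the second largest eigenvalue $\lambda^{(2)}$ of $X^{(1)}$ equals the largest modulus of $\sum_{i=1}^d\mu_i$ as $(\mu_1,\dots,\mu_d)$ ranges over the non-trivial joint eigenvalues.

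Next I would invoke the Ramanujan property: by definition the non-trivial joint spectrum of $(A_1,\dots,A_d)$ on $L^2(X(0))$ lies inside the joint spectrum of $(A_1,\dots,A_d)$ on $L^2(B(0))$. The latter is controlled by the spherical representation theory of $G=PGL_{d+1}(F)$ as set up in~\cite{LSV1}: it is parametrized by the Satake parameters $z=(z_1,\dots,z_{d+1})$ with $\prod_j z_j=1$ of the (tempered) spherical representations occurring in $L^2(B(0))$, for which $|z_j|=1$, and via the Satake isomorphism / Macdonald's formula the eigenvalue of $A_i$ at $z$ is $q^{i(d+1-i)/2}\,e_i(z_1,\dots,z_{d+1})$, where $e_i$ is the $i$-th elementary symmetric polynomial (this is exactly where the $\sqrt q$-powers enter). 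Since $e_i(z)$ is a sum of $\binom{d+1}{i}$ terms of modulus $1$, every non-trivial eigenvalue of $A_i$ on $X$ has modulus at most $\binom{d+1}{i}q^{i(d+1-i)/2}$.

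Then I would assemble and estimate. Summing over $i$ gives $|\lambda^{(2)}|\le \sum_{i=1}^d\binom{d+1}{i}q^{i(d+1-i)/2}$. Here the exponents $i(d+1-i)$ peak at $i_0=\lfloor(d+1)/2\rfloor$ with $\binom{d+1}{i}\le\binom{d+1}{i_0}$ and $q^{i(d+1-i)/2}\le q^{(d+1)^2/8}$; since $\binom{d+1}{i}_q\ge q^{i(d+1-i)}$ one has $\sqrt k\ge q^{(d+1)^2/8}$ up to lower order, and for $q$ large the sum is dominated by its central term, which yields the clean form $\binom{d+1}{\lfloor(d+1)/2\rfloor}\sqrt k$, while crudely $\sum_i\binom{d+1}{i}\le 2^{d+1}-2\le(d+1)^{d+1}$ gives $(d+1)^{d+1}q^{(d+1)^2/8}$. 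Finally, as $X^{(1)}$ is $k$-regular its Laplacian is $\Delta=kI-A$, so the smallest positive eigenvalue satisfies $\lambda_1(X^{(1)})=k-\lambda^{(2)}\ge k-(d+1)^{d+1}\sqrt k$, which fed into Proposition~\ref{prop-cheeger} gives the Cheeger-type statements. For $d=2$ one has $\binom{3}{1}=\binom{3}{2}=3$ and $i(d+1-i)=2$ for both $i=1,2$, so the per-operator bound already gives $|\lambda^{(2)}|\le 3q+3q=6q\le 6\sqrt k$ with no loss — this is the asserted improved bound, sharper than the crude $27q^{9/8}$.

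The real content is the input used in the second paragraph — temperedness of the spherical spectrum of $PGL_{d+1}(F)$ on $L^2(B(0))$ together with Macdonald's explicit Hecke-eigenvalue formula — which is part of the machinery of~\cite{LSV1}, so within the present paper this step is essentially a matter of quoting and bookkeeping rather than a genuine obstacle. The only slightly delicate point is producing the tidy constant $\binom{d+1}{\lfloor(d+1)/2\rfloor}$ instead of one inflated by a factor $\sim d$: this relies on the observation that for $q$ large relative to $d$ the sum $\sum_i\binom{d+1}{i}q^{i(d+1-i)/2}$ is dominated (up to $1+o(1)$) by its central term and that $\sqrt k$ has the matching order $q^{(d+1)^2/8}$. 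If one is content with the qualitative conclusion — $\lambda^{(2)}=O_d(\sqrt k)$, so $X^{(1)}$ is an almost-Ramanujan $k$-regular graph — even this refinement is unnecessary.
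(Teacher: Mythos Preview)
Your proposal is correct and is essentially the argument the paper has in mind: the paper does not spell out a proof but simply writes ``From the definition of Ramanujan complexes we deduce~\cite{LSV1}'', and what you have written is precisely that deduction --- identifying $A_{X^{(1)}}=\sum_i A_i$, invoking the Ramanujan bound $|\mu_i|\le\binom{d+1}{i}q^{i(d+1-i)/2}$ from the Satake/Macdonald description in~\cite{LSV1}, and summing. Your observation that the clean constant $\binom{d+1}{\lfloor(d+1)/2\rfloor}$ (rather than a factor $\sim d$ larger) requires $q$ large relative to $d$ matches the paper's own caveat ``for $q$ large w.r.t.\ $d$'', and your $d=2$ computation $3q+3q=6q\le 6\sqrt{k}$ is exactly the improved bound used later in Section~\ref{section-two-dim-proof}.
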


From spectral point of view, Ramanujan complexes are excellent high dimensional expanders, but they are not necessarily "coboundary expanders" in the sense of Definition~\ref{def-coboundary-cocycle-exp}. Indeed, if $\epsilon_i(X) > 0$ then $H^i(X,\F_2) = 0$ but we have the following.

\begin{proposition}~\label{prop-non-trivial-first-cohomology} For every $d \geq 1$ and every prime power $q$, there are infinitely many Ramanujan complexes $X$, quotients of $\tilde{A}_d(\F_q((t)))$, with $H^1(X,\F_2) \neq 0$.
\end{proposition}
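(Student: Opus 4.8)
The plan is to construct the required Ramanujan complexes as congruence quotients $X = \Gamma \backslash B$ of $B = \tilde{A}_d(\F_q((t)))$, and to exhibit a nonzero class in $H^1(X,\F_2)$ by producing a surjection from $\Gamma$ onto $\Z/2\Z$ (equivalently, a subgroup of index $2$) that is compatible with the simplicial structure. Recall that for a $K(\Gamma,1)$-type situation one has $H^1(X,\F_2) \cong H^1(\Gamma,\F_2) \cong \mathrm{Hom}(\Gamma, \F_2)$ (the building $B$ is contractible, so $X$ is a $K(\Gamma,1)$ once $\Gamma$ acts freely enough, which we have arranged by passing to a subgroup with $\mathrm{dist}(\gamma x, x) > 2$). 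Thus it suffices to find, among the lattices $\Gamma$ yielding non-partite Ramanujan complexes, infinitely many with $\Gamma^{\mathrm{ab}} \otimes \F_2 \neq 0$, i.e.\ with a nontrivial homomorphism to $\Z/2\Z$.

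First I would recall the explicit arithmetic construction of the Ramanujan lattices from~\cite{LSV2,LSV1}: one takes a division algebra (or $PGL_{d+1}$) over the global field $k = \F_q(y)$, an $S$-arithmetic lattice $\Gamma_0 \leq PGL_{d+1}(\F_q((t)))$, and then the congruence subgroups $\Gamma(I)$ for ideals $I$ of the relevant ring of $S$-integers. By~\cite[Theorem 7.1]{LSV1} infinitely many of these (those with $\Gamma G_0 = G$) give non-partite Ramanujan complexes. Second, among these I want infinitely many congruence subgroups $\Gamma$ for which $H^1(\Gamma,\F_2) \neq 0$. The natural source is reduction maps: for a suitable ideal $I$, the quotient $\Gamma(I)/\Gamma(I')$ for $I \subsetneq I'$ is a finite group of Lie type over a residue field, and when $q$ is odd such groups of Lie type are perfect, which is the \emph{wrong} direction. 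So instead I would use the characteristic-$2$ feature: when $\F_q$ has characteristic $2$, or more robustly by choosing the ideal $I$ so that the congruence quotient has a nontrivial abelianization mod $2$ — for instance, arranging the level structure so that the principal congruence quotient surjects onto an abelian $2$-group (a unipotent piece, or $\mathrm{SL}_2$ over $\F_2 = \Z/2$ which has abelianization $\Z/2$ via the $\mathfrak{S}_3 \to \Z/2$, or simply a torus). Concretely: choose $I$ so that $\Gamma_0/\Gamma$ contains, as a quotient, a group with nontrivial $2$-part of its abelianization; then the corresponding further congruence subgroup $\Gamma' \leq \Gamma$ of index a power of $2$ has $\Gamma/\Gamma'$ abelian $2$-torsion, hence $\mathrm{Hom}(\Gamma',\F_2) \supseteq \mathrm{Hom}(\Gamma/\Gamma' , \F_2) \neq 0$ provided the transfer/restriction is nonzero — but more directly $\Gamma'$ itself then fails to surject onto nothing, so I would instead just take $\Gamma'$ normal in $\Gamma$ with $\Gamma/\Gamma' \cong \Z/2\Z$; such $\Gamma'$ is itself a congruence subgroup (still arithmetic) and hence by~\cite[Theorem 7.1]{LSV1} its quotient is again a non-partite Ramanujan complex, and $0 \neq \mathrm{Hom}(\Gamma',\F_2) = H^1(\Gamma \backslash B \text{ covered by } \Gamma' \backslash B) \subseteq H^1(\Gamma' \backslash B, \F_2)$ — wait, I need the class on $\Gamma'\backslash B$, which is simply the pullback; since $\Gamma'$ is the actual fundamental group, $H^1(\Gamma'\backslash B,\F_2) = \mathrm{Hom}(\Gamma',\F_2)$, and I have just ensured this is nonzero. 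Infinitude follows by choosing infinitely many coprime levels $I$ and the associated $\Gamma'$, all distinct.

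The main obstacle I anticipate is verifying the non-partite condition $\Gamma' G_0 = G$ simultaneously with $\mathrm{Hom}(\Gamma',\F_2) \neq 0$: these could conflict if the only index-$2$ phenomena available come precisely from the $\Z/(d+1)\Z$ coloring obstruction (the map $G \to G/G_0$), which is exactly the "partite" direction we must avoid. To sidestep this I would be careful to produce the $\Z/2\Z$ quotient from a congruence condition \emph{away from} the coloring — e.g.\ from a place $v$ of $k$ where the local reduction of $\Gamma$ lands in a group whose abelianization has even order independently of the determinant/coloring map — and then check that $\Gamma' G_0 = G$ still holds because $G_0$ already surjects onto that local quotient. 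A secondary technical point is confirming $\Gamma'$ acts on $B$ with $\mathrm{dist}(\gamma x, x) > 2$ for all $1 \neq \gamma \in \Gamma'$ so that $\Gamma'\backslash B$ is a genuine simplicial complex and a $K(\Gamma',1)$; this is automatic since $\Gamma' \leq \Gamma$ and $\Gamma$ was already assumed to have this property. Finally I would remark that the same argument, using a level of the form $I^2$ or an auxiliary place, likewise gives $H^2 \neq 0$ for $d \geq 2$ (this is the content of Proposition~\ref{prop-non-trivial-first-second-cohomology}), via the analogous statement about $H^2(\Gamma',\F_2)$ — but for the present Proposition only the $H^1$ statement, i.e.\ the elementary abelianization computation above, is needed.
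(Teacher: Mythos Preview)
Your framework is correct: since $B$ is contractible and $\Gamma$ acts freely (under the $\mathrm{dist}(\gamma x,x)>2$ hypothesis), $X=\Gamma\backslash B$ is a $K(\Gamma,1)$ and $H^1(X,\F_2)=\mathrm{Hom}(\Gamma,\F_2)$, so the task is exactly to produce infinitely many Ramanujan lattices $\Gamma$ admitting a surjection onto $\Z/2\Z$.

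The gap is in actually producing that surjection. You correctly observe that the congruence quotients $\Gamma_0/\Gamma_i$ are finite groups of Lie type and hence (essentially) perfect, so they do not directly supply a $\Z/2\Z$ quotient. But your workarounds do not close the gap. The suggestion ``take $\Gamma'$ normal in $\Gamma$ with $\Gamma/\Gamma'\cong\Z/2\Z$'' presupposes precisely what must be shown --- that some Ramanujan lattice $\Gamma$ has an index-$2$ subgroup --- and once you have such a $\Gamma'$ it is $\Gamma$, not $\Gamma'$, whose quotient witnesses $H^1\neq 0$; your text switches between the two. The other suggestions (unipotent pieces, $SL_2(\F_2)$, tori) remain vague and do not give a concrete lattice. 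Finally, the non-partite worry is a red herring here: the proposition as stated does not require non-partiteness.

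The paper's missing idea is a one-line Sylow trick. Fix a congruence subgroup $\Gamma_i\lhd\Gamma_0$ with $\Gamma_0/\Gamma_i\cong PSL_{d+1}(q^{s_i})$ and $\Gamma_i\backslash B$ Ramanujan. Let $S_2$ be a $2$-Sylow subgroup of $\Gamma_0/\Gamma_i$ and let $\tilde\Gamma_i$ be its preimage in $\Gamma_0$. Then $\tilde\Gamma_i$ surjects onto the nontrivial $2$-group $S_2$, hence onto $S_2/[S_2,S_2]S_2^2\neq 0$, so $H^1(\tilde\Gamma_i,\F_2)\neq 0$. And $\tilde\Gamma_i\backslash B$ is Ramanujan because it is an intermediate quotient covered by the Ramanujan complex $\Gamma_i\backslash B$ (its spectrum is contained in that of the cover). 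Varying $i$ gives infinitely many examples. This bypasses entirely the perfectness of the Lie-type quotients that was blocking your approach.
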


\begin{proof} As shown in~\cite{LSV1}, for $F=\F_q((t))$, and for every fixed $d$, there is an arithmetic lattice $\Gamma_0 < PGL_{d+1}(F)$ with infinitely many congruence normal subgroups $\Gamma_i \lhd \Gamma_0$ such that $\Gamma_0/\Gamma_i \simeq PSL_{d+1}(q^{s_i})$ with $s_i \rightarrow \infty$, and $\Gamma_i \backslash \tilde{A}_d(F)$ is a Ramanujan complex.

Let $S_2$ be the $2$-Sylow subgroup of $PSL_{d+1}(q^{s_i})$ and $\tilde{\Gamma}_i$ its preimage in $\Gamma_0$. Then, $X=\tilde{\Gamma}_i \backslash B$, being a quotient of a Ramanujan complex, is also Ramanujan. But,
$$\tilde{\Gamma}_i/([\tilde{\Gamma}_i,\tilde{\Gamma}_i]\tilde{\Gamma}_i^2) \twoheadrightarrow S_2/([S_2,S_2]S_2^2) \neq \{0\}.$$

As $B$ is contractible,
$$H^1(X,\F_2)=H^1(\tilde{\Gamma}_i \backslash B, \F_2) = H^1(\tilde{\Gamma}_i, \F_2) = \tilde{\Gamma}_i/([\tilde{\Gamma}_i,\tilde{\Gamma}_i]\tilde{\Gamma}_i^2) \neq \{0\},$$ and the proposition is proved.
\end{proof}

A similar result hold also for the second cohomology group.

\begin{proposition}\label{prop-non-trivial-second-cohomology} For every $d \geq 2$ and every prime power $q$, there exist Ramanujan complexes $X$, quotients of $\tilde{A}_d(\F_q((t)))$ with $H^2(X,\F_2) \neq 0$.
\end{proposition}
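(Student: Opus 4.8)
The plan is to mimic the proof of Proposition~\ref{prop-non-trivial-first-cohomology}, but replacing the group-cohomological input by a Poincaré-duality (or direct dimension-count) argument that forces $H^2$ to be nonzero once $H^1$ is large enough. First I would recall the setup from~\cite{LSV1}: for $F=\F_q((t))$ and fixed $d\ge 2$ there is an arithmetic lattice $\Gamma_0<PGL_{d+1}(F)$ with infinitely many congruence normal subgroups $\Gamma_i\lhd\Gamma_0$, $\Gamma_0/\Gamma_i\cong PSL_{d+1}(q^{s_i})$, $s_i\to\infty$, with $\Gamma_i\backslash\tilde A_d(F)$ Ramanujan; and (as in the previous proof) passing to the preimage $\tilde\Gamma_i$ of a suitable subgroup keeps the quotient Ramanujan. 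The point is to arrange that the finite complex $X=\tilde\Gamma_i\backslash B$ has $H^2(X,\F_2)\ne 0$.

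The cleanest route is via the Euler characteristic together with Garland's vanishing over $\R$ and control of $H^1$. Since $B=\tilde A_d(F)$ is contractible and $\tilde\Gamma_i$ acts freely on it (we may assume this after passing to a congruence subgroup, as noted in Section~\ref{subsection-Bruhat-Tits buildings}), $X$ is a finite $K(\tilde\Gamma_i,1)$, so $H^j(X,\F_2)=H^j(\tilde\Gamma_i,\F_2)$ and the Euler characteristic $\chi(X)=\sum_{j=0}^d(-1)^j\dim_{\F_2}H^j(X,\F_2)$ equals the (scaled) Euler characteristic of the lattice, which is negative (or at any rate nonzero with a sign that, combined with the bounds below, forces a middle cohomology to appear) and, crucially, grows in absolute value like $[\Gamma_0:\tilde\Gamma_i]$. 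For $d=2$ this already suffices: $H^0(X,\F_2)=\F_2$, and if $H^2(X,\F_2)=0$ then $\chi(X)=1-\dim H^1(X,\F_2)$, which can be made to match the arithmetic Euler characteristic only if $\dim H^1$ is prescribed exactly — but we have freedom to choose $\tilde\Gamma_i$ so that this prescribed value is impossible, e.g. by choosing the subgroup so that $\dim_{\F_2}H^1(X,\F_2)=\dim_{\F_2}\tilde\Gamma_i/[\tilde\Gamma_i,\tilde\Gamma_i]\tilde\Gamma_i^2$ is controlled (bounded, say, by taking $\tilde\Gamma_i$ with small abelianization mod $2$) while $|\chi(X)|\to\infty$. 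Then $\dim H^2(X,\F_2)\ge |\chi(X)|-1-\dim H^1(X,\F_2)>0$ for $i$ large. For general $d\ge 2$ one runs the same Euler-characteristic bookkeeping but must rule out that the "missing" Euler characteristic hides in $H^j$ for $2<j\le d$; here one invokes that $H^d(X,\F_2)\cong\F_2$ (orientability mod $2$ of the quotient of the building, i.e. the top cohomology of a closed pseudomanifold), or more simply just observes that the statement only claims $H^2\ne 0$, and after grouping terms $\sum_{j\ne 0,1}(-1)^j\dim H^j$ has bounded-from-below absolute value, which in particular forces some $H^j\ne 0$ with $j\ge 2$; an extra argument (Poincaré–Lefschetz duality $H^j\cong H_{d-j}$, plus the fact that homological systoles are only logarithmic so low-degree homology is nonzero, cf.\ the remarks after Corollary~\ref{cor-systolic-two-dim}) pins it to $j=2$.

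Alternatively, and perhaps more robustly, I would argue directly: take $\tilde\Gamma_i$ so that $H^1(X,\F_2)\ne 0$ as in Proposition~\ref{prop-non-trivial-first-cohomology}, pick $0\ne\alpha\in H^1(X,\F_2)$, and consider the cup product $\alpha\smile\alpha\in H^2(X,\F_2)$ (or $\alpha\smile\beta$ for a second independent class). If one can arrange a nonzero cup product — e.g. by choosing the $2$-Sylow preimage large enough that the $\F_2$-cohomology ring of $\tilde\Gamma_i$ contains that of a rank-$\ge 2$ elementary abelian $2$-subgroup of $PSL_{d+1}(q^{s_i})$, whose cohomology ring is a polynomial/exterior algebra with nonzero products in degree $2$ — then $H^2(X,\F_2)\ne 0$ follows. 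Concretely: $PSL_{d+1}(q^{s})$ for $d\ge 2$ contains $(\Z/2)^2$ (already in $PSL_3$), its preimage $\tilde\Gamma$ in $\Gamma_0$ surjects onto $(\Z/2)^2$, and the induced map $H^*((\Z/2)^2,\F_2)\to H^*(\tilde\Gamma,\F_2)=H^*(X,\F_2)$ is injective in degrees $0$ and $1$ and one checks it stays injective in degree $2$ on at least one class (the product of the two degree-one generators), because the degree-$1$ map is an iso onto a $2$-dimensional space and the Bockstein/squaring structure is preserved. The main obstacle is exactly this last point — verifying that \emph{some} degree-$2$ product survives, rather than mapping to zero; this requires knowing that the surjection $\tilde\Gamma_i\twoheadrightarrow(\Z/2)^2$ does not kill the relevant part of $H^2$, which is where one must either use naturality of cup products together with a splitting, or fall back on the Euler-characteristic argument above, which sidesteps the ring structure entirely. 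I expect the final write-up to use the Euler-characteristic route for uniformity across all $d\ge 2$, citing Garland~\cite{Garland} for $H^i(X,\R)=0$ ($i<d$) to identify $\chi(X)$ with $1+(-1)^d\dim H^d(X,\R)$ and hence control its growth, and citing~\cite{LSV1} for the existence of the congruence tower.
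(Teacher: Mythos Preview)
Your Euler-characteristic route is genuinely correct for $d=2$, and in fact simpler than what the paper does: for a $2$-dimensional Ramanujan complex $X$ one computes directly that
\[
\chi(X)=|X(0)|\Bigl(1+(q^2+q+1)\frac{q-2}{3}\Bigr)\ge |X(0)|\ge 2,
\]
so $\dim_{\F_2}H^2(X,\F_2)=\chi(X)-1+\dim_{\F_2}H^1(X,\F_2)\ge 1$ with no need to control $H^1$ at all. (You over-complicated this part; no choice of subgroup is needed.)

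For $d\ge 3$, however, both of your routes have real gaps. The Euler-characteristic argument cannot isolate $H^2$ from $H^3,\dots,H^d$: your claim that $H^d(X,\F_2)\cong\F_2$ via ``orientability of a closed pseudomanifold'' is false, since in the building every $(d-1)$-cell lies in $q+1$ top cells, not two, so $X$ is \emph{not} a pseudomanifold and Poincar\'e--Lefschetz duality does not apply. Garland's theorem only controls cohomology over $\R$, which lets you compute $\chi$ but tells you nothing about how the $\F_2$-Betti numbers distribute among degrees $2,\dots,d$. Your cup-product route you yourself flag as incomplete, and rightly so: a surjection $\tilde\Gamma\twoheadrightarrow(\Z/2)^2$ induces an injection on $H^1$ but there is no reason any degree-$2$ class survives in $H^2(\tilde\Gamma,\F_2)$ without further input.

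The paper's proof is entirely different and works uniformly in $d\ge 2$. It first proves a general lemma (Proposition~\ref{prop-H2-group}): if the pro-$p$ completion $\Gamma_{\hat p}$ has $H^2(\Gamma_{\hat p},\F_p)\ne 0$, then $H^2(\Gamma,\F_p)\ne 0$ (via pulling back non-split central extensions). Then, taking $\Gamma=\tilde\Gamma_i$ as in Proposition~\ref{prop-non-trivial-first-cohomology}, one knows $\Gamma_{\hat 2}$ is nontrivial; but $\Gamma$ has property~(T) (since $d+1\ge 3$), so $\Gamma/[\Gamma,\Gamma]$ is finite, which forces $\Gamma_{\hat 2}$ to be a non-free pro-$2$ group, hence $H^2(\Gamma_{\hat 2},\F_2)\ne 0$. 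The key ingredient you are missing is precisely this use of property~(T) together with the cohomological characterization of free pro-$p$ groups.
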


We will prove first a purely group theoretic result which may be of independent interest.

\begin{proposition}\label{prop-H2-group} Let $\Gamma$ be a discrete group, $\hat{\Gamma}$ its profinite completion and $\Gamma_{\hat{p}}$ its pro-$p$ completion. (We do not assume that $\Gamma$ is residually finite nor residually-$p$, so $\Gamma$ may not inject into $\hat{\Gamma}$ or $\Gamma_{\hat{p}}$). Then
\begin{enumerate}
\item\label{item-prop-H2-first} If $H^2(\hat{\Gamma}, \F_p) \neq 0$ then $H^2(\Gamma, \F_p) \neq 0$.
\item\label{item-prop-H2-second} If $H^2(\Gamma_{\hat{p}}, \F_p) \neq 0$ then $H^2(\Gamma, \F_p) \neq 0$.
\end{enumerate}
\end{proposition}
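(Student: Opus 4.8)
The plan is to deduce both statements from a single fact: the canonical homomorphism $\iota\colon\Gamma\to\hat{\Gamma}$ (resp.\ $\Gamma\to\Gamma_{\hat{p}}$) induces an \emph{injective} restriction map on second cohomology,
$$\iota^*\colon H^2(\hat{\Gamma},\F_p)\hookrightarrow H^2(\Gamma,\F_p)\qquad\bigl(\text{resp. }H^2(\Gamma_{\hat{p}},\F_p)\hookrightarrow H^2(\Gamma,\F_p)\bigr),$$
where on the completion we take continuous cohomology. If $\iota^*$ is injective and its source is non-zero, then so is its target, which is exactly what is claimed. To prove injectivity I would use the classical description of $H^2$ with trivial coefficients $\F_p$ by central extensions: for the profinite group $\hat{\Gamma}$, $H^2(\hat{\Gamma},\F_p)$ classifies central extensions $1\to\F_p\to E\xrightarrow{\pi}\hat{\Gamma}\to1$ in the category of profinite groups, the zero class corresponding to those admitting a continuous section, and $\iota^*$ corresponds to pulling an extension back along $\iota$.

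So let $c\in H^2(\hat{\Gamma},\F_p)$ with $\iota^*(c)=0$, represented by $1\to\F_p\to E\xrightarrow{\pi}\hat{\Gamma}\to1$. Since $\hat{\Gamma}$ is profinite and $\F_p$ is finite, $E$ is profinite. The pullback $E'=E\times_{\hat{\Gamma}}\Gamma$ sits in an extension $1\to\F_p\to E'\xrightarrow{\pi'}\Gamma\to1$ representing $\iota^*(c)=0$, hence there is a homomorphism $s\colon\Gamma\to E'$ with $\pi's=\mathrm{id}_\Gamma$; composing with the projection $E'\to E$ gives $\varphi\colon\Gamma\to E$ with $\pi\varphi=\iota$. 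Now the key step: $E$ is profinite, so $\varphi$ extends uniquely to a continuous homomorphism $\hat{\varphi}\colon\hat{\Gamma}\to E$ by the universal property of the profinite completion. Then $\pi\hat{\varphi}\colon\hat{\Gamma}\to\hat{\Gamma}$ is continuous and agrees with $\iota=\pi\varphi$ on $\iota(\Gamma)$, which is dense, so $\pi\hat{\varphi}=\mathrm{id}_{\hat{\Gamma}}$, i.e.\ $\hat{\varphi}$ is a continuous splitting of $\pi$. This forces $c=0$, proving the first statement. (Only density of $\iota(\Gamma)$ is used, so it is irrelevant that $\Gamma$ may fail to inject into $\hat{\Gamma}$.)

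The second statement follows by the same argument with $\hat{\Gamma}$ replaced by $\Gamma_{\hat{p}}$ and "profinite" replaced by "pro-$p$"; the only additional observation needed is that an extension $E$ of the pro-$p$ group $\Gamma_{\hat{p}}$ by the $p$-group $\F_p$ is again pro-$p$ (every finite continuous quotient of $E$ has order a power of $p$, since the image of $\F_p$ in it is $0$ or $\F_p$ and in either case the quotient is assembled from finite $p$-groups), so that $\varphi\colon\Gamma\to E$ extends along $\Gamma\to\Gamma_{\hat{p}}$ by the universal property of the pro-$p$ completion. I do not expect a genuine obstacle here: the content is entirely formal, and the only points requiring care are (i) checking that $E$ lies in the correct category (profinite, resp.\ pro-$p$) so that the relevant universal property applies, and (ii) matching the continuous-cocycle definition of $H^2$ of a completion with the central-extension description — both standard.
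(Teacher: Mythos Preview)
Your argument is correct and is essentially the same as the paper's: both identify $H^2$ with central extensions, pull back an extension of $\hat{\Gamma}$ (resp.\ $\Gamma_{\hat p}$) along $\iota$, and show that a splitting of the pulled-back extension extends, via the universal property of the completion, to a continuous splitting of the original. Your presentation is in fact slightly cleaner in that you extend $\varphi\colon\Gamma\to E$ directly rather than passing through the isomorphism $\widehat{E'}\simeq E$, and you make the density argument explicit.
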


\begin{proof} As it is well known, for every discrete or profinite group $G$, $H^2(G,\F_p)$ classifies equivalent classes of central (continuous) extensions $E$ of $G$ by $\F_p$ (~\cite[Theorem 6.8.4]{RilvesZalesskii})
\begin{equation}\label{eqn-H2-first}
1 \rightarrow  \F_p \rightarrow E \rightarrow G \rightarrow 1.
\end{equation}
Now, $H^2(G,\F_p)=0$ means that every central extension as (\ref{eqn-H2-first}) splits.

Assume there is a non-splitting extension
\begin{equation}\label{eqn-H2-second}
1 \rightarrow  \F_p \rightarrow E  \xrightarrow{\eta} \hat{\Gamma} \rightarrow 1.
\end{equation}
Let $E_0 = \{(a,b) \in \Gamma \times E | \mbox{ } i(a) = \eta(b)\}$ where $i: \Gamma \rightarrow \hat{\Gamma}$ is the natural map from $\Gamma$ to its profinite completion. This gives rise to an extension
\begin{equation}\label{eqn-H2-third}
1 \rightarrow  \F_p \rightarrow E_0  \xrightarrow{\pi} \Gamma \rightarrow 1.
\end{equation}
where $\pi(a,b)=a$ for $(a,b) \in E_0$. Indeed, $\pi$ is an epimorphism as for every $a \in \Gamma$, there exists $b \in E$ with $\eta(b) = i(a)$ since $\eta$ is an epimorphism from $E$ onto $\hat{\Gamma}$. Moreover, $\mbox{ker}(\pi)=\{(a,b) \in E_0 | \mbox{ } a=e\} = \{(e,b) | \mbox { } \eta(b) = e_{\hat{\Gamma}}\}\simeq \F_p$.
We claim that (\ref{eqn-H2-third}) is not a splitting sequence. Otherwise, there exists $\pi':\Gamma \rightarrow E_0$ with $\pi \circ \pi' = \mbox{id}_{\Gamma}$.
Thus, there exists $\widehat{\pi'}:\hat{\Gamma} \rightarrow \hat{E_0}$. But, it it easy to see that $\hat{E_0}\simeq E$ and such $\widehat{\pi'}$ would split (\ref{eqn-H2-second}), a contradiction. This proves (\ref{item-prop-H2-first}). The proof of (\ref{item-prop-H2-second}) is similar, replacing profinite completion by pro-$p$ completion.
\end{proof}

We can now prove Proposition~\ref{prop-non-trivial-second-cohomology}:
\begin{proof}
Let $\Gamma=\tilde{\Gamma_i}$ be as in the proof of Proposition~\ref{prop-non-trivial-first-cohomology}. As shown there $\Gamma$ has a non-trivial finite quotient of $2$-power order. Thus, its pro-$2$ completion is not the trivial group. It is also not a free pro-$p$ group since $\Gamma$ has property(T) (note $d+1 \geq 3$) and hence $\Gamma/[\Gamma,\Gamma]$ is finite. Thus a minimal presentation of the finitely generated pro-$2$ group $\Gamma_{\hat{2}}$ requires at least one relation and hence by~\cite[Theorem 7.8.3]{RilvesZalesskii} $H^2(\Gamma_{\hat{2}},\F_2) \neq 0$. We can apply now Proposition~\ref{prop-H2-group} to deduce that $H^2(\Gamma,\F_2) \neq 0$. As in the proof of Proposition~\ref{prop-non-trivial-first-cohomology}, we can conclude that $H^2(X,\F_2) \neq 0$.
\end{proof}

We formulate Proposition~\ref{prop-non-trivial-first-cohomology} and Proposition~\ref{prop-non-trivial-second-cohomology} in the way which is most interesting for us, i.e., showing that Ramanujan complexes are not necessarily coboundary expanders. But, in fact, the proofs show that
for every cocompact lattice $\Gamma$ in $PGL_{d+1}(F)$, $d\geq 2$, has a finite index subgroup $\Gamma'$ with $H^1(\Gamma',\F_2) \neq 0$ and $H^2(\Gamma',\F_2) \neq 0$. We do not know if analogues results are valid for $H^i$, for $i \geq 3$ (and $d \geq i$). Our proofs of Proposition~\ref{prop-non-trivial-first-cohomology} and Proposition~\ref{prop-non-trivial-second-cohomology} use the explicit group theoretic interpretation of the first and second cohomology groups. No such explicit interpretation is known for $H^i$, $i \geq 3$.

%

\section{From Isoperimetric inequalities to topological expanders}
\label{section-main-proof}
In this section we show that the isoperimetric inequalities of Theorem~\ref{thm-isoperimetric -inequalities} imply Theorem~\ref{thm-main}. The connection is via (an extended version of) Gromov's Theorem, Theorem~\ref{thm-Gromov-criteria-for-top-exp}.

So, we fix now a very large prime power $q$ and write $F=\F_q((t))$, $B=A_3(F)$ the $3$-dimensional Bruhat-Tits building associated with $PGL_4(F)$, $X$ a non-partite Ramanujan quotient of $B$ and $Y=X^{(2)}$, the $2$-skeleton of $X$. In $\cite{LSV2}$, it was shown that there are infinitely many such $X$'s with $|X| \rightarrow \infty$. Our goal is to show that the $2$-dimensional simplicial complex $Y$ has the $\epsilon$-topological overlapping property for some $\epsilon> 0$, depending maybe on $q$, but not on $X$ or $Y$. This will prove Theorem~\ref{thm-main} and answers Gromov's question in the affirmative as every vertex of $Y$ is contained in at most $O(q^5)$ $2$-cells.

To this end, we should show now that $Y$ satisfies the assumption of Theorem~\ref{thm-Gromov-criteria-for-top-exp}. Here $d=2$ and we have to show that $\mu_i(Y)$, $i=0,1$ are bounded from above and $syst^i(X)$, $i=0,1$ are bounded from below.

Let us start with the systole. As $Y^{(1)}=X^{(1)}$ is connected, $H^0(Y,\F_2) = 0$ and so $syst^0(Y)=\infty$ and this case is trivial. The argument for $syst^1(Y)$ is more involved. Here, it is possible that $H^1(Y,\F_2)=H^1(X,\F_2)$ is non zero (see Proposition~\ref{prop-non-trivial-first-cohomology}). So, let $\alpha \in Z^1(Y,\F_2) \setminus B^1(Y,\F_2)$. If $\alpha$ is not locally minimal, then by Proposition~\ref{prop-loc-min-properties}(\ref{item-one-prop-loc-min-properties}) we can replace it by a locally minimal $\alpha'$ with $||\alpha'|| \leq ||\alpha||$ and $\alpha' \equiv \alpha (\mbox{mod }B^i)$, so $\alpha'$ is also in $Z^1(Y,\F_2) \setminus B^1(Y,\F_2)$. Thus, to prove the lower bound on $syst^1(Y)$, we can assume $\alpha$ is locally minimal and we claim now that $||\alpha|| > \eta_1$, for the $\eta_1$ of Theorem~\ref{thm-isoperimetric -inequalities}. If not, then by that theorem, $||\delta_1(\alpha)|| \geq \epsilon_1||\alpha||$. But, $\alpha \in Z^1$, so $\delta_1(\alpha) =0$ and hence $\alpha =0$, in contradiction to the assumption that $\alpha \notin B^1$.

We now turn to prove upper bounds on the filling norms $\mu_0$ and $\mu_1$ of $Y$. Let $\beta \in B^{i+1}(Y,\F_2)$, $i=0$ or $i=1$, so $\beta=\delta_i(\alpha)$ for some $\alpha \in C^i(X,\F_2)$. We claim that one can choose such $\alpha$ with
\begin{equation}\label{equation-proof-main-thm}
||\alpha|| \leq \mu_i||\beta||, \mbox{  } \mu_i = \mbox{max}(\frac{1}{\eta_{i+1}},\frac{2-i}{i+2}m(i) )
\end{equation}
where $\eta_{i+1}$ is the one from Theorem~\ref{thm-isoperimetric -inequalities} and $m(i)$ is the one from Proposition~\ref{prop-loc-min-properties}, i.e., the number of $i$-cells containing a vertex. To see this, assume first $||\beta|| > \eta_{i+1}$. As we always have $||\alpha|| \leq 1$, (\ref{equation-proof-main-thm}) clearly holds, so assume $||\beta|| \leq \eta_{i+1}$. Apply Proposition~\ref{prop-loc-min-properties}(\ref{item-one-prop-loc-min-properties}) for $Y$ whose dimension is $2$ and for $i+1$: we can replace $\beta$ by a locally minimal $\beta'$ with $\beta' \equiv \beta (\mbox{ mod } B^{i+1})$, so $\beta'$ is also a coboundary, $||\beta'|| \leq ||\beta||$, so $||\beta'|| \leq \eta_{i+1}$ and furthermore $\beta' = \beta  +\delta_i(\gamma)$ where $\gamma \in C^{i}(X, \F_2)$ with $||\gamma|| \leq c_i||\beta|| $ when $c_i = \frac{2-i}{i+2}m(i)$. Here $m(i)$ is $1$ when $i=0$ and $O(q^4)$ for $i=1$.

As $\beta'$ is locally minimal in $C^{i+1}(Y,\F_2) =  C^{i+1}(X,\F_2)$ \footnote{Note that the norms in $C^{i+1}(Y,\F_2)$ and $C^{i+1}(X,\F_2)$ are the same since every $2$-cells of $X$ is contained in exactly $(q+1)$ $3$-cells of $X$ } and $||\beta'|| \leq \eta_{i+1}$, Theorem~\ref{thm-isoperimetric -inequalities} implies that $||\delta_{i+1}(\beta')|| \geq \epsilon_{i+1} ||\beta'||$. But, $\beta' \in B^{i+1}(Y,\F_2) = B^{i+1}(X,\F_2) \subseteq Z^{i+1}(X,\F_2)$ so $\delta_{i+1}(\beta') = 0$ and hence $\beta'=0$. Thus, $\beta= \delta_i(\gamma)$ and again (\ref{equation-proof-main-thm}) is valid and Theorem~\ref{thm-main} is proved.

\begin{remark} The reader should note that in order to prove that $\mu_1$ is bounded from above, we have used $\delta_2: C^2(X,\F_2) \rightarrow C^3(X,\F_2)$, i.e., we have used the $3$-dimensional complex $X$ even though our result is for the $2$-dimensional complex $Y$. This is the crucial point which enables us to prove Theorem~\ref{thm-main} for $Y$, while we do not know the topological overlapping property for $2$-dimensional Ramanujan complexes.
\end{remark}

We finally note that the method of proof gives also a systolic inequity for $X$ as above:
\begin{cor}
Let $X$ be a non-partite Ramanujan complex of dimension $3$ as above. Then for $i=0,1,2$, $syst^i(X) \geq \nu_i$ for some constants $\nu_i > 0$.
\end{cor}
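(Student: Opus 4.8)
The plan is to establish this as an essentially immediate corollary of Theorem~\ref{thm-isoperimetric -inequalities}, running for the full $3$-dimensional complex $X$ exactly the argument used above to bound $\mathrm{syst}^1(Y)$ from below, only now invoking the isoperimetric inequalities in all three degrees $i=0,1,2$ rather than just $i=0,1$. For $i=0$ there is nothing to do: $X^{(1)}$ is connected, so $H^0(X,\F_2)=0$, hence $\mathrm{syst}^0(X)=\infty$ and any $\nu_0>0$ works.

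So fix $i\in\{1,2\}$; if $H^i(X,\F_2)=0$ then $\mathrm{syst}^i(X)=\infty$ and again there is nothing to prove, so assume otherwise and let $\alpha\in Z^i(X,\F_2)\setminus B^i(X,\F_2)$ be any representative of a nonzero cohomology class. If $\alpha$ is not already locally minimal, I would replace it, via Proposition~\ref{prop-loc-min-properties}(\ref{item-one-prop-loc-min-properties}), by a locally minimal $\alpha'$ with $\alpha'\equiv\alpha\ (\mbox{mod }B^i)$ and $\|\alpha'\|\le\|\alpha\|$. Since $B^i\subseteq Z^i$ is a subspace, adding a coboundary changes neither cocycle-ness nor membership in $B^i$, so $\alpha'$ still lies in $Z^i\setminus B^i$; and it is locally minimal in the sense of Definition~\ref{def-loc-minimal}, which is precisely the hypothesis of Theorem~\ref{thm-isoperimetric -inequalities}.

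Now I claim $\|\alpha'\|>\eta_i$, where $\eta_i$ is the constant from Theorem~\ref{thm-isoperimetric -inequalities}: if instead $\|\alpha'\|\le\eta_i$, the theorem gives $\|\delta_i(\alpha')\|\ge\epsilon_i\|\alpha'\|$, but $\alpha'\in Z^i$ forces $\delta_i(\alpha')=0$, hence $\alpha'=0$, contradicting $\alpha'\notin B^i$. Therefore $\mathrm{syst}^i(X)\ge\eta_i$, and one may take $\nu_i:=\eta_i$ for $i=1,2$ (with $\nu_0$ arbitrary). All of the real content sits inside Theorem~\ref{thm-isoperimetric -inequalities}, and in particular inside its case $i=2$ (proved in Section~\ref{section-three-dim-coboundary-exp}), which genuinely uses the nonzero coboundary map $\delta_2\colon C^2(X,\F_2)\to C^3(X,\F_2)$ on the $3$-dimensional complex $X$ — a map that vanishes on $C^2(Y,\F_2)$, which is exactly why $\mathrm{syst}^2$ never entered the proof of Theorem~\ref{thm-main}; beyond citing that theorem I expect no further obstacle here.
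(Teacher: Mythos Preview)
Your proposal is correct and follows essentially the same approach as the paper: for $i=0$ the claim is vacuous, and for $i=1,2$ you replace a nontrivial cocycle by a locally minimal representative and apply Theorem~\ref{thm-isoperimetric -inequalities} to force $\|\alpha'\|>\eta_i$. The paper's proof is terser (it refers back to the $\mathrm{syst}^1(Y)$ argument for $i=1$ and says ``argue similarly'' for $i=2$), but the content is identical.
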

\begin{proof}
For $i=0$, $H^0(X) =0$ and there is nothing to prove and for $i=1$, $syst^1(X)=syst^1(Y)$ where $Y = X^{(2)}$ and this was proved above. For $i=2$, we can argue in a similar way as before: if $\alpha \in Z^2 \setminus B^2$ (such $\alpha$ can exist - see Proposition~\ref{prop-non-trivial-second-cohomology}) we can replace it by a locally minimal one $\alpha'$ and argue as before to deduce that $||\alpha'|| \geq \eta_2$.
\end{proof}

\section{Expansion of $1$-cochains in $2$-dimensional Ramanujan complexes}
\label{section-two-dim-proof}
In this section we prove Theorem~\ref{thm-two-dim-coboundary-exp}. We note that in this case every vertex (edge) is in a constant number of triangles so the norm (on vertices or on edges) is the normalized counting norm. It will be easier therefore to work here simply with the counting norm $|\alpha|$ and in the end of the proof "to translate" the result to $||\alpha||$.

\subsection{Proof of Theorem~\ref{thm-two-dim-coboundary-exp}}
So $X$ is a Ramanujan complex of dimension $2$. Every vertex $v$ has degree $Q=2(q^2+q+1)$ and the link $X_v$ at any vertex $v$ is the "lines versus points" graph of the projective plane $\P^2(\F_q)$, which is a $(q+1)$-regular bipartite graph on $2(q^2+q+1)$ points. The cochain $\alpha$ can be thought of as a set of edges of $X$ such that $|\alpha_v| \leq  \frac{Q}{2}$ for every $v$, since $\alpha$ is locally minimal (see Proposition~\ref{prop-loc-min-properties} (\ref{item-two-prop-loc-min-properties})).

\begin{lemma}~\label{lemma-traingles-counting-in-dim-two}
For $i=0,1,2,3$ denote by $t_i$, the number of triangles of $X$ which contain exactly $i$ edges from $\alpha$. Then,
\begin{enumerate}
\item~\label{item-one-lemma-traingles-counting-in-dim-two} $t_1+2t_2+3t_3=(q+1)|\alpha|$.
\item~\label{item-two-lemma-traingles-counting-in-dim-two} $|\delta_1(\alpha)| = t_1+t_3$.
\item~\label{item-three-lemma-traingles-counting-in-dim-two} $\sum_{v \in X(0)}|E_{X_v}(\alpha_v, \overline{\alpha_v})|=2t_1+2t_2$.
\end{enumerate}
Here we consider $\alpha_v$, which is the set of edges of $\alpha$ touching $v$, as a set of vertices of the link $X_v$. By $\overline{\alpha_v}$ we denote its complement there and $E_{X_v}(\alpha_v, \overline{\alpha_v})$ the set of edges from $\alpha_v$ to $\overline{\alpha_v}$.
\end{lemma}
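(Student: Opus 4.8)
All three identities are double-counting arguments organized by the \emph{type} of a triangle: for $\sigma \in X(2)$, call $\mathrm{type}(\sigma)=i$ if exactly $i$ of the three edges of $\sigma$ lie in $\alpha$, so that $t_i=|\{\sigma \in X(2) : \mathrm{type}(\sigma)=i\}|$. Recall that in a $2$-dimensional Ramanujan complex every edge is contained in exactly $q+1$ triangles (the link of a vertex is $S(3,q)$, and every $1$-cell of $\tilde A_2(F)$, hence of $X$, lies in $q+1$ $2$-cells).

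For part (\ref{item-one-lemma-traingles-counting-in-dim-two}) I would count the set of pairs $\{(e,\sigma) : e\in\alpha,\ \sigma\in X(2),\ e\subset\sigma\}$ in two ways. Summing over $e\in\alpha$ first and using that each such edge lies in $q+1$ triangles gives $(q+1)|\alpha|$. Summing over $\sigma\in X(2)$ first, a triangle of type $i$ contributes exactly $i$ pairs, so the total is $0\cdot t_0+1\cdot t_1+2t_2+3t_3$. Equating the two gives $t_1+2t_2+3t_3=(q+1)|\alpha|$. For part (\ref{item-two-lemma-traingles-counting-in-dim-two}), note that by definition $\delta_1(\alpha)(\sigma)=\sum_{e\subset\sigma,\ \dim e=1}\alpha(e)$, which over $\F_2$ is the parity of $\mathrm{type}(\sigma)$; hence $\sigma$ lies in the support of $\delta_1(\alpha)$ iff $\mathrm{type}(\sigma)\in\{1,3\}$, giving $|\delta_1(\alpha)|=t_1+t_3$.

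For part (\ref{item-three-lemma-traingles-counting-in-dim-two}) I would switch the order of summation, contributing triangle by triangle. Fix $\sigma=\{x,y,z\}\in X(2)$ and a vertex, say $x$, of $\sigma$. In the link $X_x$ the vertices correspond to edges of $X$ through $x$ and the edge of $X_x$ coming from $\sigma$ joins the vertices corresponding to $\{x,y\}$ and $\{x,z\}$; this link-edge lies in $E_{X_x}(\alpha_x,\overline{\alpha_x})$ iff exactly one of the edges $\{x,y\},\{x,z\}$ belongs to $\alpha$. Running through the four possibilities for $\mathrm{type}(\sigma)$: if $\mathrm{type}(\sigma)\in\{0,3\}$ then at every vertex both incident edges of $\sigma$ are in $\alpha$ or both are not, so $\sigma$ contributes $0$; if $\mathrm{type}(\sigma)=1$, the condition holds exactly at the two endpoints of the unique $\alpha$-edge of $\sigma$, contributing $2$; if $\mathrm{type}(\sigma)=2$, it holds exactly at the two endpoints of the unique non-$\alpha$-edge, again contributing $2$. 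Summing over all triangles gives $\sum_{v\in X(0)}|E_{X_v}(\alpha_v,\overline{\alpha_v})|=2t_1+2t_2$.

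I do not expect a genuine obstacle here; the only point requiring care is the bookkeeping in part (\ref{item-three-lemma-traingles-counting-in-dim-two}) — correctly matching vertices of $X_v$ with edges through $v$ and edges of $X_v$ with triangles through $v$, and verifying that in the type-$1$ and type-$2$ cases the "exactly one incident $\alpha$-edge" condition holds at precisely the two vertices claimed. (Implicitly one also uses, as in the opening paragraph of this section, that since every edge lies in the same number $q+1$ of triangles, these counting identities are the literal statements for the un-normalized counting norm.)
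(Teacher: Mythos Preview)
Your proof is correct and follows essentially the same approach as the paper: both parts (\ref{item-one-lemma-traingles-counting-in-dim-two}) and (\ref{item-two-lemma-traingles-counting-in-dim-two}) are handled identically, and for part (\ref{item-three-lemma-traingles-counting-in-dim-two}) the paper carries out exactly your triangle-by-triangle case analysis on $\mathrm{type}(\sigma)$. Your write-up is in fact slightly more explicit about which two vertices of $\sigma$ contribute in the type-$1$ and type-$2$ cases, but the argument is the same.
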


\begin{proof} For (\ref{item-one-lemma-traingles-counting-in-dim-two}) we recall that every edge lies on $q+1$ triangles and a triangle which contributes to $t_i$ contains $i$ edges from $\alpha$. Part (\ref{item-two-lemma-traingles-counting-in-dim-two}) is simply the definition of $\delta_1(\alpha)$, which is the set of all triangles containing an odd number of edges from $\alpha$. For (\ref{item-three-lemma-traingles-counting-in-dim-two}) we argue as follows.

If $\triangle =\{v_0,v_1,v_2\}$ is a triangle of $X$, then it contributes an edge at $X_{v_k}$ ($\{ v_k\} = \{v_i,v_j,v_k\}\backslash \{v_i,v_j\} )$. This is the edge between $e_{i,k}=(v_i,v_k)$ and $e_{j,k}=(v_j,v_k)$ when we consider $e_{i,k}$ and $e_{j,k}$ as vertices of $X_{v_k}$. This edge will be in $E_{X_{v_k}}(\alpha_{v_k}, \overline{\alpha_{v_k}})$ if and only if exactly one of $\{e_{i,k},e_{j,k}\}$ is in $\alpha$. A case by case analysis of the four possibilities shows that if $\triangle$ has either $0$ or $3$ edges from $\alpha$ then $\triangle$ does not contribute anything to the left hand sum. On the other hand, if it has either $1$ or $2$ edges, it contributes $2$ to the sum. This proves the lemma.
\end{proof}

Fix now a small $\epsilon>0$ to be determined later and define:

\begin{definition} A vertex $v$ of $X$ is called {\em thin} w.r.t. $\alpha$ if $|\alpha_v| < (1-\epsilon)\frac{Q}{2}$ and {\em thick} otherwise (recall that by our local minimality assumption, $|\alpha_v| \leq \frac{Q}{2}$ for every $v$).

Denote
\begin{itemize}
\item $W=\{v \in V=X(0) | \mbox{ } \exists e \in \alpha \mbox{ with } v \in e\}$.
\item $R = \{v \in W | \mbox{ } v \mbox{ thin} \}$.
\item $S = \{v \in W | \mbox{ } v \mbox{ thick} \} = W \setminus R $.
\end{itemize}

Let $r=\sum_{v \in R}|\alpha_v|$ and $s=\sum_{v \in S}|\alpha_v|$.
\end{definition}

\begin{lemma}~\label{lemma-$r+s$}
$r+s = 2|\alpha|$
\end{lemma}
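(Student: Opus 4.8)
The statement $r+s = 2|\alpha|$ is really just a double-counting identity, so the plan is to compute $\sum_{v\in W}|\alpha_v|$ in two ways. First I would observe that $R$ and $S$ partition $W$ (every vertex of $W$ is either thin or thick, and not both), so by definition $r+s = \sum_{v\in R}|\alpha_v| + \sum_{v\in S}|\alpha_v| = \sum_{v\in W}|\alpha_v|$. Next I would note that for a vertex $v \notin W$ we have $\alpha_v = \emptyset$ and hence $|\alpha_v| = 0$, so in fact $\sum_{v\in W}|\alpha_v| = \sum_{v\in X(0)}|\alpha_v|$.

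It remains to evaluate $\sum_{v\in X(0)}|\alpha_v|$. Here $\alpha_v$ is the set of edges of $\alpha$ incident to $v$ (viewed as vertices of the link $X_v$), so $|\alpha_v|$ counts the edges of $\alpha$ containing $v$. Summing over all $v$, each edge $e \in \alpha$ has exactly two endpoints and is therefore counted exactly twice, giving $\sum_{v\in X(0)}|\alpha_v| = 2|\alpha|$. Combining the two displays yields $r+s = 2|\alpha|$.

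There is essentially no obstacle here: the only things to be slightly careful about are (i) that the norm in play is the plain counting norm $|\cdot|$ rather than the weighted norm $\|\cdot\|$ (which is legitimate in dimension $2$ since every vertex lies in the same number of triangles, as noted at the start of Section~\ref{section-two-dim-proof}), and (ii) that the partition $W = R \sqcup S$ is genuinely disjoint, which is immediate from the definitions of thin and thick. This lemma is the degree-two analogue, restricted to the support of $\alpha$, of Lemma~\ref{lemma-loc-min} specialized to $i=1$, and it will be used to split the subsequent counting arguments into the contributions of thin versus thick vertices.
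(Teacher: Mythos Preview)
Your proof is correct and takes essentially the same approach as the paper: the paper's proof is the single sentence ``Every edge in $\alpha$ contributes $2$ to the left hand side,'' which is exactly the double-counting you spell out in more detail.
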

\begin{proof} Every edge in $\alpha$ contributes $2$ to the left hand side.
\end{proof}

\begin{lemma}~\label{lemma-edges-exiting-alpha-v}
\begin{enumerate}
\item~\label{item-one-lemma-edges-exiting-alpha-v} For every $v \in V$, $|E_{X_v}(\alpha_v, \overline{\alpha_v})| \geq \frac{1}{2}(q+1-\sqrt{q})|\alpha_v|$.
\item~\label{item-two-lemma-edges-exiting-alpha-v} If $v$ is thin, then $|E_{X_v}(\alpha_v, \overline{\alpha_v})| \geq \frac{(1+\epsilon)}{2}(q+1-\sqrt{q})|\alpha_v|$.
\end{enumerate}
\end{lemma}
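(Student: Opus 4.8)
The plan is to read off both inequalities directly from the spectral gap of the link $X_v$. Recall that $X_v$ is the ``points versus lines'' incidence graph of the projective plane $\P^2(\F_q)$: a connected $(q+1)$-regular bipartite graph on $Q = 2(q^2+q+1)$ vertices whose adjacency eigenvalues are $\pm(q+1)$ and $\pm\sqrt q$ (the latter with high multiplicity). Hence its Laplacian $\Delta = (q+1)I - A$ has spectrum consisting of $0$, $2(q+1)$, $(q+1)-\sqrt q$ and $(q+1)+\sqrt q$ (with multiplicities), so its smallest positive eigenvalue is $\lambda_1(X_v) = q+1-\sqrt q$. The eigenvalue $-(q+1)$ forced by bipartiteness only contributes the top Laplacian eigenvalue $2(q+1)$ and does not affect $\lambda_1$.

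First I would apply Proposition~\ref{prop-cheeger}(1) to the graph $X_v$ with the vertex subset $W = \alpha_v$ (viewing $\alpha_v$, the set of edges of $\alpha$ touching $v$, as a set of vertices of $X_v$), so that $|W| = |\alpha_v|$ and $|\overline{\alpha_v}| = Q - |\alpha_v|$. This yields at once
$$|E_{X_v}(\alpha_v,\overline{\alpha_v})| \ \ge\ \frac{|\alpha_v|\,(Q-|\alpha_v|)}{Q}\,(q+1-\sqrt q)\ =\ |\alpha_v|\Bigl(1-\tfrac{|\alpha_v|}{Q}\Bigr)(q+1-\sqrt q).$$

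For part (\ref{item-one-lemma-edges-exiting-alpha-v}), local minimality of $\alpha$ forces $|\alpha_v| \le Q/2$ by Proposition~\ref{prop-loc-min-properties}(\ref{item-two-prop-loc-min-properties}), hence $1 - |\alpha_v|/Q \ge 1/2$ and the first bound follows. For part (\ref{item-two-lemma-edges-exiting-alpha-v}), if $v$ is \emph{thin} then $|\alpha_v| < (1-\epsilon)Q/2$, so $1 - |\alpha_v|/Q > 1 - (1-\epsilon)/2 = (1+\epsilon)/2$, and the same display gives the improved estimate $|E_{X_v}(\alpha_v,\overline{\alpha_v})| \ge \tfrac{1+\epsilon}{2}(q+1-\sqrt q)|\alpha_v|$.

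There is no real obstacle here: the statement is essentially a one-line consequence of the Alon--Milman inequality once the link is identified as a Ramanujan graph. The only points requiring care are (i) correctly recording the adjacency eigenvalues of the incidence graph of $\P^2(\F_q)$ and the resulting value $\lambda_1(X_v) = q+1-\sqrt q$, and (ii) applying Proposition~\ref{prop-cheeger} with the correct vertex count $|V(X_v)| = Q = 2(q^2+q+1)$ (and not $q^2+q+1$).
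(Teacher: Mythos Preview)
Your proof is correct and is exactly the approach taken in the paper: identify $X_v$ as the incidence graph of $\P^2(\F_q)$ with adjacency eigenvalues $\pm(q+1),\pm\sqrt q$, read off $\lambda_1(X_v)=q+1-\sqrt q$, and apply Proposition~\ref{prop-cheeger}(1) together with the bound $|\alpha_v|\le Q/2$ (resp.\ $|\alpha_v|<(1-\epsilon)Q/2$). The paper's proof is essentially the one-line version of what you wrote.
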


\begin{proof} As mentioned in Section~\ref{subsection-spherical-buildings}, the link $X_v$ is the "line versus points" graph of the projective plane. It is a $(q+1)$-regular graph whose eigenvalues are $\pm(q+1)$ and $\pm \sqrt{q}$. Hence,
$\lambda_1(X_v) = (q+1)-\sqrt{q}$. Part~\ref{item-one-lemma-edges-exiting-alpha-v} now follows from Proposition~\ref{prop-cheeger}, and similarly
part~\ref{item-two-lemma-edges-exiting-alpha-v}.
\end{proof}

We can deduce

\begin{lemma}~\label{lemma-2t-1+2t-2}
$2t_1 + 2t_2 \geq (q+1-\sqrt{q})|\alpha| + \frac{\epsilon}{2} (q+1-\sqrt{q}) r$.
\end{lemma}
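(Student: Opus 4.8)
The plan is to combine the three preceding lemmas in the section. Recall from Lemma~\ref{lemma-traingles-counting-in-dim-two}(\ref{item-three-lemma-traingles-counting-in-dim-two}) that
$$\sum_{v \in X(0)}|E_{X_v}(\alpha_v, \overline{\alpha_v})| = 2t_1 + 2t_2,$$
so it suffices to bound the left-hand sum from below. First I would split the sum over $X(0)$ according to whether a vertex is thin, thick, or not touched by $\alpha$ at all: vertices outside $W$ contribute $0$ (there $\alpha_v = \emptyset$), so
$$2t_1 + 2t_2 = \sum_{v \in R}|E_{X_v}(\alpha_v, \overline{\alpha_v})| + \sum_{v \in S}|E_{X_v}(\alpha_v, \overline{\alpha_v})|.$$

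Next I would apply Lemma~\ref{lemma-edges-exiting-alpha-v}: for the thick vertices $v \in S$ use part~(\ref{item-one-lemma-edges-exiting-alpha-v}), giving a contribution of at least $\frac{1}{2}(q+1-\sqrt q)|\alpha_v|$, and for the thin vertices $v \in R$ use part~(\ref{item-two-lemma-edges-exiting-alpha-v}), giving at least $\frac{1+\epsilon}{2}(q+1-\sqrt q)|\alpha_v|$. Summing and writing $r = \sum_{v\in R}|\alpha_v|$, $s=\sum_{v\in S}|\alpha_v|$, this yields
$$2t_1 + 2t_2 \geq \tfrac{1}{2}(q+1-\sqrt q)\,s + \tfrac{1+\epsilon}{2}(q+1-\sqrt q)\,r = \tfrac{1}{2}(q+1-\sqrt q)(r+s) + \tfrac{\epsilon}{2}(q+1-\sqrt q)\,r.$$
Finally, invoking Lemma~\ref{lemma-$r+s$}, $r+s = 2|\alpha|$, the first term becomes $(q+1-\sqrt q)|\alpha|$, which gives exactly the claimed inequality
$$2t_1 + 2t_2 \geq (q+1-\sqrt q)|\alpha| + \tfrac{\epsilon}{2}(q+1-\sqrt q)\,r.$$

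There is no real obstacle here — this is a bookkeeping step that packages the previous three lemmas. The only thing to be slightly careful about is the decomposition of the global sum: one must note that $E_{X_v}(\alpha_v,\overline{\alpha_v}) = \emptyset$ when $v \notin W$ (so that these vertices may be dropped), and that $W = R \sqcup S$ is a genuine disjoint union, so no vertex is double counted. Everything else is a direct substitution of the bounds from Lemma~\ref{lemma-edges-exiting-alpha-v} and the identity from Lemma~\ref{lemma-$r+s$}.
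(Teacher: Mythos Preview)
Your proposal is correct and follows exactly the same approach as the paper's own proof: split the sum $2t_1+2t_2=\sum_{v}E_{X_v}(\alpha_v,\overline{\alpha_v})$ over $R$ and $S$, apply the two parts of Lemma~\ref{lemma-edges-exiting-alpha-v}, and then substitute $r+s=2|\alpha|$ from Lemma~\ref{lemma-$r+s$}. Your added remark that vertices outside $W$ contribute nothing (so the sum over $X(0)$ reduces to one over $W=R\sqcup S$) makes explicit a point the paper leaves implicit.
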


\begin{proof}
\begin{eqnarray}
2t_1 + 2t_2 = \sum_{v \in W}E_{X_v}(\alpha_v, \overline{\alpha_v})
& =    &  \sum_{v \in R}E_{X_v}(\alpha_v, \overline{\alpha_v}) + \sum_{v \in S}E_{X_v}(\alpha_v, \overline{\alpha_v}) \\
& \geq &  \sum_{v \in R}\frac{(1+\epsilon)}{2}(q+1-\sqrt{q})|\alpha_v| + \sum_{v \in S}\frac{1}{2}(q+1-\sqrt{q})|\alpha_v| \\
& =    &  \frac{(1+\epsilon)}{2}(q+1-\sqrt{q})r + \frac{1}{2}(q+1-\sqrt{q})s \\
& =    &  \frac{1}{2}(q+1-\sqrt{q})(r+s)+\frac{\epsilon}{2} (q+1-\sqrt{q})r \\
& =    &(q+1-\sqrt{q})|\alpha| + \frac{\epsilon}{2}(q+1-\sqrt{q})r
\end{eqnarray}
In the first equation we have used Lemma~\ref{lemma-traingles-counting-in-dim-two}, part (\ref{item-three-lemma-traingles-counting-in-dim-two}) and in the last one Lemma~\ref{lemma-$r+s$}. The inequality follows from Lemma~\ref{lemma-edges-exiting-alpha-v}.
\end{proof}

\begin{lemma}\label{lemma-t-1-minus-3t-3}
$t_1 - 3t_3 \geq \frac{\epsilon}{2} (q+1-\sqrt{q})r - \sqrt{q}|\alpha|$.
\end{lemma}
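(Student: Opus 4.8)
The plan is to obtain the inequality purely by combining the three identities of Lemma~\ref{lemma-traingles-counting-in-dim-two} with the lower bound of Lemma~\ref{lemma-2t-1+2t-2}; no new geometric input is needed at this stage, since the essential work has already been pushed into those statements (ultimately into the spectral gap $q+1-\sqrt q$ of the link $X_v$, used via Proposition~\ref{prop-cheeger} in Lemma~\ref{lemma-edges-exiting-alpha-v}).

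First I would eliminate $t_2$. From Lemma~\ref{lemma-traingles-counting-in-dim-two}(\ref{item-one-lemma-traingles-counting-in-dim-two}) we have $2t_2 = (q+1)|\alpha| - t_1 - 3t_3$, while Lemma~\ref{lemma-traingles-counting-in-dim-two}(\ref{item-three-lemma-traingles-counting-in-dim-two}) gives $2t_1 + 2t_2 = \sum_{v \in X(0)} |E_{X_v}(\alpha_v, \overline{\alpha_v})|$. Substituting the first relation into the second yields
\[
t_1 - 3t_3 \;=\; \Big(\sum_{v \in X(0)} |E_{X_v}(\alpha_v, \overline{\alpha_v})|\Big) - (q+1)|\alpha| \;=\; (2t_1+2t_2) - (q+1)|\alpha|.
\]
Then I would apply Lemma~\ref{lemma-2t-1+2t-2}, namely $2t_1+2t_2 \geq (q+1-\sqrt{q})|\alpha| + \frac{\epsilon}{2}(q+1-\sqrt{q})\, r$, to get
\[
t_1 - 3t_3 \;\geq\; (q+1-\sqrt{q})|\alpha| + \frac{\epsilon}{2}(q+1-\sqrt{q})\, r - (q+1)|\alpha| \;=\; \frac{\epsilon}{2}(q+1-\sqrt{q})\, r - \sqrt{q}\,|\alpha|,
\]
which is exactly the assertion of the lemma.

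I do not expect any real obstacle here: the step is bookkeeping. The only points to verify are that the combination $2t_1+2t_2-(q+1)|\alpha|$ is indeed $t_1-3t_3$ (immediate from the $t_2$-elimination above, watching the signs), and that $(q+1-\sqrt q)|\alpha| - (q+1)|\alpha| = -\sqrt q\,|\alpha|$. The genuine content sits upstream: the gain term $\frac{\epsilon}{2}(q+1-\sqrt q)\, r$ encodes the extra edge-expansion contributed by the thin vertices through the Cheeger-type estimate for the $(q+1)$-regular Ramanujan link $X_v$, so this lemma is simply repackaging Lemma~\ref{lemma-2t-1+2t-2} into a form ($t_1-3t_3$) convenient for later comparison with $|\delta_1(\alpha)| = t_1 + t_3$.
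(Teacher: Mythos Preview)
Your argument is correct and is exactly the paper's approach: the paper simply subtracts the identity $t_1+2t_2+3t_3=(q+1)|\alpha|$ from the inequality of Lemma~\ref{lemma-2t-1+2t-2}, which is precisely your $t_2$-elimination step. Nothing further is needed.
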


\begin{proof} Subtract equation (\ref{item-one-lemma-traingles-counting-in-dim-two}) in Lemma~\ref{lemma-traingles-counting-in-dim-two} from the equation obtained in Lemma~\ref{lemma-2t-1+2t-2}.
\end{proof}

Our goal now is to show that $r$, the contribution of the thin edges is at least some fixed fraction of $|\alpha|$. This will prove that for $q$ large enough $t_1 \geq cq|\alpha|$ and this will give the theorem. Up to now we have used only the local structure of $X$, the links. Now we will use the global structure, the fact that its $1$-skeleton is almost a Ramanujan graph.

\begin{lemma}\label{lemma-bound-on-number-of-edges-within-thick-vertices} The total number of edges in $X^{(1)}$ between the thick vertices is bounded as follows:
$$|E_{X^{(1)}}(S)| \leq |\alpha|(\frac{1}{(1-\epsilon)^2(1+\epsilon_0)} + \frac{12q}{(1-\epsilon)Q}). $$
\end{lemma}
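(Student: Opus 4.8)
The quantity to bound is $|E_{X^{(1)}}(S)|$, the number of edges of the $1$-skeleton both of whose endpoints are \emph{thick} vertices. The plan is to combine two facts: first, a lower bound on $|S|$ forces $S$ to contain many edges of $\alpha$ (since thick vertices carry $\alpha_v$ of size close to $Q/2$), while conversely an \emph{upper} bound on the number of $\alpha$-edges inside $S$ will come from the hypothesis $|\alpha| < \tfrac{Qn}{8(1+\epsilon_0)}$; second, the near-Ramanujan property of $X^{(1)}$ (Corollary~\ref{cor-bounding-lambda-in-quotient-Ram-complex} with $d=2$: $\lambda_1(X^{(1)}) \geq Q - 6\sqrt{q}$, using $Q \approx 2q^2$ so $\sqrt{k} \approx \sqrt{2}\,q$) controls how many edges can live inside any vertex subset via Proposition~\ref{prop-cheeger}(3).

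\textbf{Step 1: bound $|S|$ from above.} Each thick vertex $v$ has $|\alpha_v| \geq (1-\epsilon)\tfrac{Q}{2}$ by definition. Summing over $v \in S$ and using that $\sum_{v}|\alpha_v| = 2|\alpha|$ (each edge of $\alpha$ counted at its two endpoints), we get
\[
(1-\epsilon)\tfrac{Q}{2}\,|S| \;\leq\; \sum_{v \in S}|\alpha_v| \;\leq\; 2|\alpha|,
\]
so $|S| \leq \dfrac{4|\alpha|}{(1-\epsilon)Q}$. Since $|\alpha| < \tfrac{Qn}{8(1+\epsilon_0)}$, this gives $|S| \leq \dfrac{n}{2(1-\epsilon)(1+\epsilon_0)}$, i.e. $|S|$ is bounded away from $n$ (for $\epsilon$ small and $\epsilon_0$ fixed), so that $\tfrac{|\bar S|}{n}$ in Proposition~\ref{prop-cheeger}(3) is bounded below.

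\textbf{Step 2: apply the Cheeger/expander-mixing bound to $S$.} The $1$-skeleton $X^{(1)}$ is $Q$-regular with $\lambda_1(X^{(1)}) \geq Q - 6\sqrt{q}$. Proposition~\ref{prop-cheeger}(3) gives
\[
|E_{X^{(1)}}(S)| \;\leq\; \tfrac{1}{2}\Bigl(Q - \tfrac{|\bar S|}{n}\lambda_1\Bigr)|S|
\;\leq\; \tfrac{1}{2}\Bigl(Q - \tfrac{|\bar S|}{n}(Q - 6\sqrt q)\Bigr)|S|
\;=\; \tfrac{1}{2}\Bigl(\tfrac{|S|}{n}Q + \tfrac{|\bar S|}{n}\,6\sqrt q\Bigr)|S|.
\]
Now substitute $|S| \leq \dfrac{4|\alpha|}{(1-\epsilon)Q}$ in the outer factor, bound $\tfrac{|S|}{n} \leq \dfrac{1}{2(1-\epsilon)(1+\epsilon_0)}$ from Step~1 in the first inner term, and bound $\tfrac{|\bar S|}{n} \leq 1$ in the second. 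This yields
\[
|E_{X^{(1)}}(S)| \;\leq\; \tfrac{1}{2}\Bigl(\tfrac{Q}{2(1-\epsilon)(1+\epsilon_0)} + 6\sqrt q\Bigr)\cdot\frac{4|\alpha|}{(1-\epsilon)Q}
\;=\; |\alpha|\Bigl(\frac{1}{(1-\epsilon)^2(1+\epsilon_0)} + \frac{12\sqrt q}{(1-\epsilon)Q}\Bigr),
\]
which is the claimed inequality up to the cosmetic point that $\sqrt q$ versus $q$ in the last term (the paper writes $12q/((1-\epsilon)Q)$, an even weaker—hence still valid—bound since $\sqrt q \le q$; one simply relaxes the estimate at the end).

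\textbf{Main obstacle.} The only real subtlety is organizing the two roles of the hypothesis $|\alpha| < \tfrac{Qn}{8(1+\epsilon_0)}$: it is needed both to make $|S|/n$ small (so the coefficient in front of $Q$ in Proposition~\ref{prop-cheeger}(3) is genuinely less than $1$) and it reappears when converting $|S|$ back to a multiple of $|\alpha|$. One must be careful that the two uses are consistent and that the $\epsilon$ (thinness threshold) and $\epsilon_0$ (density slack) are kept as independent free parameters, so that later, when $\epsilon$ is finally chosen small, the coefficient $\tfrac{1}{(1-\epsilon)^2(1+\epsilon_0)}$ stays strictly below $1$ — this is what ultimately forces a positive proportion of $\alpha$'s mass onto thin vertices and drives the rest of the argument. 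Everything else is a direct substitution into Proposition~\ref{prop-cheeger}(3) and Corollary~\ref{cor-bounding-lambda-in-quotient-Ram-complex}.
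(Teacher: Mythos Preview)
Your approach is essentially identical to the paper's: bound $|S|$ via the thickness condition and $\sum_v|\alpha_v|=2|\alpha|$, then apply Proposition~\ref{prop-cheeger}(3) together with the spectral gap from Corollary~\ref{cor-bounding-lambda-in-quotient-Ram-complex}, and finally insert the hypothesis $|\alpha|<\tfrac{Qn}{8(1+\epsilon_0)}$. There is, however, a slip in your spectral bound: for $d=2$ the corollary gives $\lambda_1(X^{(1)})\geq k-6\sqrt{k}$ with $k=Q\approx 2q^2$, so the error term is of order $q$ (you yourself write $\sqrt{k}\approx\sqrt{2}\,q$ in the parenthetical), not of order $\sqrt{q}$; the paper uses the bound $6q$ on the second adjacency eigenvalue. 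With the correct input your $12\sqrt{q}$ becomes $12q$, matching the lemma exactly---so your closing remark that the paper merely ``relaxed'' a sharper estimate is backwards: the $q$ is what the argument actually produces.
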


\begin{proof}
Recall, that by Corollary~\ref{cor-bounding-lambda-in-quotient-Ram-complex}, the second largest eigenvalue of the adjacency matrix of $X^{(1)}$ is bounded from above by $6q$.
So $\lambda_1(X^{(1)}) \geq Q-6q=2{q^2}-4q+1$. Note now that every vertex in $S$ touches at least $(1-\epsilon)\frac{Q}{2}$ edges of $\alpha$, hence $|S| \leq \frac{2|\alpha|}{(1-\epsilon)\frac{Q}{2}}=\frac{4|\alpha|}{(1-\epsilon)Q}$. Proposition~\ref{prop-cheeger} implies therefore (when $|X(0)|=n$)

\begin{eqnarray}
|E(S)| & \leq & \frac{1}{2}(Q-\frac{|\overline{S}|}{n}\lambda_1(X^{(1)}))|S| \\
     & \leq & \frac{1}{2}(Q-\frac{|\overline{S}|}{n}(Q-6q))|S| =  \frac{1}{2}(Q(1-\frac{|\overline{S}|}{n})+6q \frac{|\overline{S}|}{n})|S|\\
     & \leq & \frac{1}{2}(Q \frac{|S|}{n}+6q)|S| \leq \frac{1}{2}(\frac{4|\alpha|}{(1-\epsilon)n}+6q)|S|
\end{eqnarray}

Note now that the assumption $||\alpha|| \leq \frac{1}{4(1+\epsilon_0)}$ means $|\alpha| \leq \frac{Qn}{8(1+\epsilon_0)}$ and hence,
$$|E(S)| \leq (\frac{2Q}{8(1-\epsilon)(1+\epsilon_0)}+3q)\frac{4|\alpha|}{(1-\epsilon)Q}=|\alpha|(\frac{1}{(1-\epsilon)^2(1+\epsilon_0)} + \frac{12q}{(1-\epsilon)Q}). $$
\end{proof}

\begin{proof}(of Theorem~\ref{thm-two-dim-coboundary-exp})
We can now finish the proof of Theorem~\ref{thm-two-dim-coboundary-exp}. Choose $\epsilon > 0$ such that $\frac{1}{(1-\epsilon)^2(1+\epsilon_0)} < 1$
and then assume that $q$ is sufficiently large such that $\frac{1}{(1-\epsilon)^2(1+\epsilon_0)} + \frac{12q}{(1-\epsilon)Q} < 1-\xi < 1$, for some $\xi > 0$. This now means by Lemma~\ref{lemma-bound-on-number-of-edges-within-thick-vertices} that at most $(1-\xi)$ of the edges in $\alpha$ are between two thick vertices, namely, for at least $\xi|\alpha|$ edges, one of their endpoints is thin. This implies that $r \geq \xi|\alpha|$. Plugging this in Lemma~\ref{lemma-t-1-minus-3t-3}, we get $t_1  \geq  (\frac{\epsilon}{2}(q+1-\sqrt{q})\xi - \sqrt{q})|\alpha|$. Again, if $q$ is large enough this means that $|\delta_1(\alpha)| \geq t_1 \geq \epsilon_1 q |\alpha|$.

Now for $\beta \in C^2(X,\F_2)$, $||\beta|| =\frac{|\beta|}{|X(2)|}$. For $\alpha \in C^1(X,\F_2)$, $||\alpha||= \frac{(q+1)|\alpha|}{2|X(2)|}$.
Thus, $||\delta_1(\alpha)||=\frac{\delta_1(\alpha)}{|X(2)|} \geq \frac{\epsilon_1 q |\alpha|}{|X(2)|} =  \frac{\epsilon_1 q \cdot 3|X(2)| \cdot ||\alpha||}{(q+1)|X(2)|} \geq \epsilon_2||\alpha||$ for $\epsilon_2 = 3\frac{q}{q+1} \epsilon_1 \geq 2\epsilon_1$.
Theorem~\ref{thm-two-dim-coboundary-exp} is now proved.
\end{proof}

The proof is effective. One can estimate $\epsilon_2$ and how large should be $q$, in term of $\epsilon_0$. It is independent of $q$ provided $q$ is large enough.

Let us mention that along the way we have proved two facts which are worth formulating separately.

\begin{cor}\label{cor-from-2-dim-result} In the notations and assumptions as above. For every $\epsilon > 0$, if $q \geq q(\epsilon) \gg 0$, then we have:
\begin{enumerate}
\item If $\alpha \in B^1(X,\F_2)$ is a locally minimal coboundary with $||\alpha|| < \frac{1}{4(1+\epsilon)}$ then $\alpha = 0$.
\item If $\alpha \in Z^1(X,\F_2) \setminus B^1(X,\F_2)$, then $||\alpha|| > \frac{1}{4(1+\epsilon)}$, In particular, every representative of a non-trivial cohomology class has linear size support.
\end{enumerate}
\end{cor}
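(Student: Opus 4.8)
The plan is to deduce both items directly from Theorem~\ref{thm-two-dim-coboundary-exp}, using nothing more than the identity $\delta_1\circ\delta_0=0$ (so every coboundary, like every cocycle, is annihilated by $\delta_1$) together with Proposition~\ref{prop-loc-min-properties}(\ref{item-one-prop-loc-min-properties}) to pass, inside a fixed coset modulo $B^1$, to a locally minimal representative of no larger norm. In effect, the corollary just records the contrapositive of the theorem once one observes that for the cochains in question the coboundary $\delta_1\alpha$ already vanishes.

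For the first item: given a locally minimal $\alpha\in B^1(X,\F_2)$ with $\|\alpha\|<\tfrac1{4(1+\epsilon)}$, I would apply Theorem~\ref{thm-two-dim-coboundary-exp} with input parameter $\epsilon_0:=\epsilon$, obtaining (for $q\ge q(\epsilon)$) a constant $\epsilon'>0$ with $\|\delta_1(\alpha)\|\ge\epsilon'\|\alpha\|$. Since $\alpha\in B^1\subseteq Z^1$ we have $\delta_1(\alpha)=0$, so $\epsilon'\|\alpha\|\le 0$ and hence $\alpha=0$.

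For the second item: starting from $\alpha\in Z^1(X,\F_2)\setminus B^1(X,\F_2)$, first replace $\alpha$ by the locally minimal $\alpha'$ of Proposition~\ref{prop-loc-min-properties}(\ref{item-one-prop-loc-min-properties}); since $\alpha'\equiv\alpha\pmod{B^1}$ and $B^1\subseteq Z^1$, we still have $\alpha'\in Z^1\setminus B^1$, and $\|\alpha'\|\le\|\alpha\|$. Running the argument of the first item on $\alpha'$: were $\|\alpha'\|<\tfrac1{4(1+\epsilon)}$, then (as $\delta_1(\alpha')=0$) the theorem would force $\alpha'=0$, contradicting $\alpha'\notin B^1$; hence $\|\alpha\|\ge\|\alpha'\|\ge\tfrac1{4(1+\epsilon)}$. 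To get the strict inequality one just chooses a slightly smaller parameter $\epsilon_0<\epsilon$ and sets $q(\epsilon):=q(\epsilon_0)$, so the above gives $\|\alpha\|\ge\tfrac1{4(1+\epsilon_0)}>\tfrac1{4(1+\epsilon)}$. Finally, for the ``linear size'' remark: in dimension $2$ every edge lies on exactly $q+1$ triangles, so by Proposition~\ref{prop-loc-min-properties}(\ref{item-two-prop-loc-min-properties}) the norm on $C^1(X,\F_2)$ is the normalized counting norm $\|\alpha\|=|\alpha|/|X(1)|$, whence $|\alpha|>|X(1)|/(4(1+\epsilon))$; and $|X(1)|=Q\,|X(0)|/2$ since $X^{(1)}$ is $Q$-regular, so $|\alpha|$ is linear in $|X(0)|$.

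There is no real obstacle here --- all the substance is in Theorem~\ref{thm-two-dim-coboundary-exp}, and this corollary is bookkeeping. The only points that need a moment's care are checking that passing to a locally minimal representative preserves membership in $Z^1\setminus B^1$ (immediate from $B^1\subseteq Z^1$), and the small $\epsilon$-adjustment needed to convert the theorem's strict hypothesis $\|\alpha\|<\tfrac1{4(1+\epsilon_0)}$ into the strict conclusion $\|\alpha\|>\tfrac1{4(1+\epsilon)}$ stated here.
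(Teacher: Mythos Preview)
Your proposal is correct and matches the paper's approach: the paper presents this corollary as an immediate byproduct of Theorem~\ref{thm-two-dim-coboundary-exp} (``along the way we have proved two facts which are worth formulating separately''), and the explicit argument it gives for the analogous 3-dimensional corollary (Corollary~\ref{cor-from-3-dim-result}) is exactly yours---apply the isoperimetric theorem to a locally minimal representative and use $\delta_1\alpha=0$. Your extra care with the strict inequality and the ``linear size'' remark is more than the paper spells out.
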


This is the systolic inequality promised in Corollary~\ref{cor-systolic-two-dim} of the introduction. Note that by Proposition~\ref{prop-non-trivial-first-cohomology}, there are indeed cases with $H^1(X,\F_2) \neq \{0\}$ so the second item of Corollary~\ref{cor-from-2-dim-result} is a non-vacuous systolic result. Such results are of potential interest also for quantum error-correcting codes (see \cite{LubotzkyGuth},\cite{Zemor} and the references therein).

\section{Expansion of $i$-cochains in $3$-dimensional Ramanujan complexes}
\label{section-one-cochain-three-dim-proof}
In this section we prove Theorem~\ref{thm-isoperimetric -inequalities} for the cases $i=0$ and $i=1$. Let us start with the easier case - vertex expansion.
\subsection{Proof of Theorem~\ref{thm-isoperimetric -inequalities} for the case $i=0$}
The case $i=0$ of Theorem~\ref{thm-isoperimetric -inequalities} is nothing more than the standard result asserting that $X^{(1)}$ - the $1$-skeleton of $X$ is an expander graph. But, some care is needed here since the edges of $X^{(1)}$, when considered as edges of a graph get equal weights, but when considered as edges of the $3$-dimensional complex $X$, have different weights. In fact, a black edge, i.e., one corresponding to the $1$ or $3$ dimensional subspace in $\F_q^4$, when we look at the links of its vertices, has $\Theta$-times the weight of a white edge (an edge which corresponds to a $2$-dimensional subspace of $\F_q^4$) when $\Theta = \frac{c_1^b}{c_1^w}=\frac{{3 \choose 1}_q \cdot {2 \choose 1}_q}{{2 \choose 1}_q \cdot {2 \choose 1}_q} =\frac{q^2+q+1}{q+1} \approx q$ since a $1$ or $3$ dimensional subspace is contained in $c_1^b = {3 \choose 1}_q \cdot {2 \choose 1}_q$ maximal flags in $\F_q^4$, while a $2$-dimensional subspace only in $c_1^w = {2 \choose 1}_q \cdot {2 \choose 1}_q$ such flags.

Let $\alpha \in C^0(X,\F_2)$ be a locally minimal $0$-cochain of $X$, i.e., a minimal cochain (see Section~\ref{subsection-notions-of-minimality}). So, $\alpha$ is a subset of the $X(0)$ - the set of vertices of $X$ - containing at most half of the vertices (since all the vertices have the same weight). By Corollary~\ref{cor-bounding-lambda-in-quotient-Ram-complex}, $\lambda_1(X^{(1)}) \geq k - 4^4\sqrt{k}$ where $k$ is the degree of the $k$-regular graph $X^{(1)}$, so $k \approx q^4$. Now, Proposition~\ref{prop-cheeger} implies that

\begin{eqnarray}\label{eqn:case-i-0-first}
|\delta_0(\alpha)| = |E(\alpha, \bar{\alpha})| \geq \frac{|\alpha||\bar{\alpha}|}{|X^{(0)}|}(q^4 - 4^4q^2) \geq \frac{1}{2}(q^4 - 4^4q^2)|\alpha|.
\end{eqnarray}

In terms of norms:

\begin{eqnarray}\label{eqn:case-i-0-second}
||\alpha|| = |\alpha| \cdot \frac{c_0}{{4 \choose 1}|X(3)|}
\end{eqnarray}

where $c_0$ is the number of $3$-cells of $X$ containing a vertex $v$. This number is independent of $v$, equal to the number of maximal flags in $\F_q^4$ and it is approximately $q^6$. On the other hand, if $\beta:=\delta_0(\alpha)$, $\beta = \beta^b + \beta^w$ where $\beta^b$ (resp. $\beta^w$) is the set of black (resp. white) edges of $\beta$, then

\begin{eqnarray}\label{eqn:case-i-0-third}
||\delta_0(\alpha)|| = ||\beta|| = \frac{1}{{4 \choose 2}|X(3)|}(c_1^b|\beta^b| + c_1^w|\beta^w|) = \frac{(q+1)^2}{{4 \choose 2}|X(3)|}(\Theta|\beta^b| + |\beta^w|) \geq \frac{(q+1)^2}{{4 \choose 2}|X(3)|} |\beta|=\frac{(q+1)^2}{{4 \choose 2}|X(3)|} |\delta_0(\alpha)|
\end{eqnarray}


Combining, (\ref{eqn:case-i-0-third}), (\ref{eqn:case-i-0-first}) and (\ref{eqn:case-i-0-second}) we deduce:

\begin{eqnarray}\label{eqn:case-i-0-forth}
||\delta_0(\alpha)|| \geq  \frac{(q+1)^2}{{4 \choose 2}|X(3)|} |\delta_0(\alpha)| \geq \frac{(q+1)^2}{{4 \choose 2}|X(3)|} \frac{1}{2} (q^4 - 4^4q^2)|\alpha| =  \frac{(q+1)^2}{{4 \choose 2}|X(3)|} \frac{1}{2} (q^4 - 4^4q^2) \frac{{4 \choose 1} |X(3)|}{c_0}||\alpha||\geq \epsilon_0 ||\alpha||
\end{eqnarray}

Case $i=0$ of Theorem~\ref{thm-isoperimetric -inequalities} is proven, with $\eta_0=1$ and $\epsilon_0$ independent of $q$, since $c_0 \approx q^6$.

\subsection{Proof of Theorem~\ref{thm-isoperimetric -inequalities} for the case $i=1$}
The main idea of the proof is similar to the one that was shown in Section~\ref{section-two-dim-proof} for $1$-cochains in Ramanujan complexes of dimension $2$, but here edges have different weights, so more care is needed. Let $\alpha \in C^1(X,\F_2)$ be a $1$-cochain of a $3$-dimensional non-partite Ramanujan Complex $X$. The cochain $\alpha$ is a collection of edges of two types: black and white as described in Section~\ref{section-buildings}. The black (resp. white) ones, when considered as vertices in the links of their end points, correspond to subspaces of dimension $1$ or $3$ (resp. $2$) in $\F_q^4$ and such an edge is contained in $(q^2+q+1)(q+1)$ (resp. $(q+1)^2$) pyramids. We denote by $\alpha^b$ the set of black edges of $\alpha$ and by $\alpha^w$ the set of white edges of $\alpha$.

The weight of a black (resp. white) edge is $w(e^b) = \frac{(q^2+q+1)(q+1)}{{4 \choose 2}|X(3)|}$ (resp. $w(e^w) = \frac{(q+1)^2}{{4 \choose 2}|X(3)|}$). Denote $\Theta = \frac{w(e^b)}{w(e^w)} = \frac{q^2+q+1}{q+1} \approx q$.
The norm of $\alpha$ is therefore
$$||\alpha|| = \frac{(q+1)^2}{{4 \choose 2}|X(3)|}(\Theta|\alpha^b| + |\alpha^w|).$$

It will be convenient in this section to use also the following norm of $\alpha$:

$$\uparrow \alpha \uparrow = \Theta |\alpha^b| + |\alpha^w|.$$

If $v$ is a vertex of $X$, then as before $\alpha_v$ is the set of edges of $\alpha$ with one endpoint in $v$, and $\alpha_v^b$ (resp. $\alpha_v^w$)
are the black (resp. white) ones. They give a $0$-cochain $\alpha_v \in C^0(X_v, \F_2)$ whose norm is

$$||\alpha_v || = \frac{(q^2+q+1)(q+1)}{{3 \choose 1}|X_v(2)|} |\alpha_v^b| + \frac{(q+1)^2}{{3 \choose 1}|X_v(2)|}|\alpha_v^w| = \frac{(q+1)^2}{{3 \choose 1}|X_v(2)|}(\Theta|\alpha_v^b| + |\alpha_v^w|).$$

Again, we denote

$$\uparrow\alpha_v\uparrow = \Theta |\alpha_v^b| + |\alpha_v^w|.$$

Note that $|X_v(2)|$ depends only on $q$, in fact,  $|X_v(2)| =(q^3+q^2+q+1)(q^2+q+1)(q+1) \approx q^6$.

Since $\alpha$ is locally minimal, for every vertex $v$, $\alpha_v$ is a minimal cochain of $C^0(X_v, \F_2)$, i.e., $\alpha_v$ is minimal in the coset $\alpha_v + B^0(X_v, \F_2)$, i.e., $||\alpha_v|| \leq ||\alpha_v + {\bf 1}_{X_v(0)}||$. This means that
$\Theta |\alpha_v^b| + |\alpha_v^w| \leq \frac{1}{2}(\Theta {\bf 1}^b_{X_v(0)} + {\bf 1}^w_{X_v(0)})$.

The righthand side is easily determined:

$${\bf 1}^b_{X_v(0)} = {4 \choose 1}_q + {4 \choose 3}_q = 2(q^3+q^2+q+1) \approx 2q^3.$$

$${\bf 1}^w_{X_v(0)} = {4 \choose 2}_q= \frac{(q^3+q^2+q+1)(q^2+q+1)}{(q+1)} \approx q^4.$$

Thus,

$\Theta |\alpha_v^b| + |\alpha_v^w| \leq \frac{1}{2}(\Theta 2 {4 \choose 1}_q  + {4 \choose 2}_q) \approx \frac{3}{2}q^4$.

Denote $Q = \Theta 2 {4 \choose 1}_q  + {4 \choose 2}_q \approx 3q^4$, so $\uparrow\alpha_v\uparrow \leq \frac{Q}{2}$.


\begin{lemma}~\label{lemma-traingles-counting-in-dim-3}
For $i=0,1,2,3$ denote by $t_i$, the number of triangles of $X$ which contain exactly $i$ edges from $\alpha$. Then,
\begin{enumerate}
\item~\label{item-one-lemma-traingles-counting-in-dim-3} $t_1+2t_2+3t_3=2(q+1)\uparrow\alpha\uparrow$.
\item~\label{item-two-lemma-traingles-counting-in-dim-3} $|\delta_1(\alpha)| = t_1+t_3$.
\item~\label{item-three-lemma-traingles-counting-in-dim-3} $\sum_{v \in X(0)}|E_{X_v}(\alpha_v, \overline{\alpha_v})|=2t_1+2t_2$.
\end{enumerate}
\end{lemma}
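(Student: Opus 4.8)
All three identities are double-counting arguments and are the exact analogues of Lemma~\ref{lemma-traingles-counting-in-dim-two} from the $2$-dimensional case; the only genuinely new ingredient is that in a $3$-dimensional Ramanujan complex edges fall into two combinatorial types — black (dimension $1$ or $3$ in the link) and white (dimension $2$) — lying on different numbers of $2$-cells, so the first step is to record those two numbers. The plan is then: first read off the number of triangles through a black, resp.\ white, edge from the structure of the link $S(4,q)$; then prove the first identity by counting pairs $(T,e)$ with $T$ a triangle, $e\in\alpha$, $e\subseteq T$; the second identity directly from the definition of $\delta_1$; and the third by a four-case analysis of the contribution of a single triangle.

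For the link count: a $2$-cell of $X$ containing an edge $e$ is the same thing as a vertex of the link $X_e$. Fixing $v\in e$ and regarding $e$ as a vertex $\bar e$ of $X_v\cong S(4,q)$, we have $X_e\cong (S(4,q))_{\bar e}$. If $e$ is black, then $\bar e$ is a subspace of $\F_q^4$ of dimension $1$ or $3$, and $(S(4,q))_{\bar e}$ is the flag complex of a $3$-dimensional $\F_q$-space, i.e.\ $S(3,q)$ — the points-versus-lines incidence graph of $\P^2(\F_q)$ — which has $2(q^2+q+1)$ vertices. If $e$ is white, then $\bar e$ is $2$-dimensional and $(S(4,q))_{\bar e}$ is the complete bipartite graph on the $q+1$ lines contained in $\bar e$ and the $q+1$ hyperplanes containing $\bar e$, which has $2(q+1)$ vertices. (As a consistency check, the numbers of $3$-cells on a black, resp.\ white, edge are the numbers of \emph{edges} of these two graphs, namely $(q^2+q+1)(q+1)$ and $(q+1)^2$, as recorded at the start of Section~\ref{section-one-cochain-three-dim-proof}.)

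Given this, $t_1+2t_2+3t_3$ counts the pairs $(T,e)$ with $T\in X(2)$, $e\in\alpha$, $e\subseteq T$, organized by $T$; organizing instead by $e$ yields $\sum_{e\in\alpha}\#\{T\in X(2)\mid e\subseteq T\}=2(q^2+q+1)\,|\alpha^b|+2(q+1)\,|\alpha^w|$, which equals $2(q+1)\bigl(\Theta|\alpha^b|+|\alpha^w|\bigr)=2(q+1)\uparrow\alpha\uparrow$ because $\Theta=\frac{q^2+q+1}{q+1}$. The second identity is immediate: $\delta_1(\alpha)(T)=\sum_{e\subseteq T}\alpha(e)$ is nonzero in $\F_2$ exactly when $T$ contains an odd number of edges of $\alpha$, i.e.\ $1$ or $3$, so $|\delta_1(\alpha)|=t_1+t_3$. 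For the third: an edge of the link $X_v$ corresponds to a triangle $T\ni v$, with endpoints the two edges of $T$ at $v$, and it lies in $E_{X_v}(\alpha_v,\overline{\alpha_v})$ iff exactly one of those two edges of $T$ lies in $\alpha$. Fixing a triangle $T$ with $j$ of its edges in $\alpha$ and summing its contributions over its three vertices: for $j=0$ or $j=3$ no vertex of $T$ sees exactly one $\alpha$-edge (contribution $0$); for $j=1$ the two endpoints of the $\alpha$-edge each see exactly one and the third vertex none (contribution $2$); for $j=2$ the common vertex of the two $\alpha$-edges sees two, the other two vertices each see exactly one (contribution $2$). Summing over all triangles gives $\sum_{v\in X(0)}|E_{X_v}(\alpha_v,\overline{\alpha_v})|=2t_1+2t_2$.

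The arguments are routine; the one spot requiring care is correctly identifying the link $(S(4,q))_{\bar e}$ in the black versus white case and verifying that the resulting total $2(q^2+q+1)|\alpha^b|+2(q+1)|\alpha^w|$ is \emph{exactly} $2(q+1)\uparrow\alpha\uparrow$ — i.e.\ that the weight ratio $\Theta$ entering the norm $\uparrow\,\cdot\,\uparrow$ coincides with the ratio appearing in the triangle count. No global or spectral property of $X$ is invoked here; everything reduces to the structure of the links together with elementary case analysis, just as in Lemma~\ref{lemma-traingles-counting-in-dim-two}.
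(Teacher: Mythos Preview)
Your proof is correct and follows essentially the same route as the paper: a double count of pairs $(T,e)$ for part~(\ref{item-one-lemma-traingles-counting-in-dim-3}) using the black/white triangle counts $2(q^2+q+1)$ and $2(q+1)$, the definition of $\delta_1$ for part~(\ref{item-two-lemma-traingles-counting-in-dim-3}), and the identical four-case vertex analysis for part~(\ref{item-three-lemma-traingles-counting-in-dim-3}). The only difference is that you derive the triangle counts from the link structure $(S(4,q))_{\bar e}$ explicitly, whereas the paper simply states them as $2{3\choose 1}_q$ and $2(q+1)$; this is extra detail, not a different argument.
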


\begin{proof}
For (\ref{item-one-lemma-traingles-counting-in-dim-3}) we recall that the number of of triangles that contain a certain black edge is $2{3 \choose 1}_q$. The number of triangles that contain a certain white edge is $2(q+1)$. A triangle that contributes to $t_i$ contain $i$-edges from $\alpha$. Thus:
$$t_1+2t_2+3t_3 = 2 {3 \choose 1}_q |\alpha^b| + 2(q+1)|\alpha^w| = 2(q+1)(\frac{{3 \choose 1}_q}{q+1} |\alpha^b| + |\alpha^w|)=2(q+1)(\Theta|\alpha^b| + |\alpha^w|) = 2(q+1)\uparrow \alpha \uparrow$$

Part (\ref{item-two-lemma-traingles-counting-in-dim-3}) follows from the definitions.

For (\ref{item-three-lemma-traingles-counting-in-dim-3}): If $\triangle =\{v_0,v_1,v_2\}$ is a triangle of $X$, then it contributes an edge at $X_{v_k}$ ($\{ v_k\} = \{v_i,v_j,v_k\}\backslash \{v_i,v_j\} )$. This is the edge between $e_{i,k}=(v_i,v_k)$ and $e_{j,k}=(v_j,v_k)$ when we consider $e_{i,k}$ and $e_{j,k}$ as vertices of $X_{v_k}$. This edge will be in $E_{X_{v_k}}(\alpha_{v_k}, \overline{\alpha_{v_k}})$ if and only if exactly one of $\{e_{i,k},e_{j,k}\}$ is in $\alpha$. A case by case analysis of the four possibilities shows that if $\triangle$ has either $0$ or $3$ edges from $\alpha$ then $\triangle$ does not contribute anything to the left hand sum. On the other hand, if it has either $1$ or $2$ edges, it contributes $2$ to the sum. This proves the lemma.
\end{proof}

Fix now a small $\epsilon>0$ to be determined later and define:

\begin{definition} A vertex $v$ of $X$ is called {\em thin} w.r.t. $\alpha$ if $\uparrow \alpha_v \uparrow < (1-\epsilon)\frac{Q}{2}$ and {\em thick} otherwise (recall that by our local minimality assumption, $\uparrow \alpha_v \uparrow \leq \frac{Q}{2}$ for every $v$).

Denote
\begin{itemize}
\item $W=\{v \in V=X(0) | \mbox{ } \exists e \in \alpha \mbox{ with } v \in e\}$.
\item $R = \{v \in W | \mbox{ } v \mbox{ thin} \}$.
\item $S = \{v \in W | \mbox{ } v \mbox{ thick} \} = W \setminus R $.
\end{itemize}

Let $r=\sum_{v \in R}\uparrow \alpha_v \uparrow$ and $s=\sum_{v \in S} \uparrow \alpha_v \uparrow$.
\end{definition}

\begin{lemma}~\label{lemma-$r+s$-dim-3}
$r+s = 2\uparrow \alpha \uparrow$
\end{lemma}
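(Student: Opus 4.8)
The plan is to mirror the one-line double-counting argument used for Lemma~\ref{lemma-$r+s$} in the two-dimensional case, now keeping track of the weight $\Theta$ carried by black edges. First I would observe that, by definition, $R$ and $S$ form a partition of $W$, so $r+s = \sum_{v\in W}\uparrow\alpha_v\uparrow$; and since $\alpha_v=\emptyset$ (hence $\uparrow\alpha_v\uparrow=0$) for every vertex $v\notin W$, this is the same as $\sum_{v\in X(0)}\uparrow\alpha_v\uparrow$.

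Next I would expand $\uparrow\alpha_v\uparrow = \Theta|\alpha_v^b| + |\alpha_v^w|$ and sum over $v$, getting $r+s = \Theta\sum_{v}|\alpha_v^b| + \sum_{v}|\alpha_v^w|$. The only thing to verify is the double-counting identities $\sum_{v}|\alpha_v^b| = 2|\alpha^b|$ and $\sum_{v}|\alpha_v^w| = 2|\alpha^w|$: every edge of $\alpha$ has exactly two endpoints, so a black edge $e$ lies in $\alpha_v^b$ for precisely the two vertices $v\in e$, and similarly for white edges. Substituting gives $r+s = 2\Theta|\alpha^b| + 2|\alpha^w| = 2(\Theta|\alpha^b|+|\alpha^w|) = 2\uparrow\alpha\uparrow$, as claimed.

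There is no real obstacle here — the statement is a bookkeeping identity — so the "hard part" is simply being careful that the weight $\Theta$ is attached to each black edge once at each of its two endpoints (and likewise the weight $1$ for white edges), which is exactly what the definition of $\uparrow\,\cdot\,\uparrow$ records, so the weighted count over vertices is precisely twice the weighted count over $\alpha$.
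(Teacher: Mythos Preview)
Your argument is correct and is essentially the paper's own proof, spelled out in more detail: the paper simply states that every edge in $\alpha^b$ contributes $2\Theta$ and every edge in $\alpha^w$ contributes $2$ to the left-hand side, giving $r+s = 2\Theta|\alpha^b|+2|\alpha^w| = 2\uparrow\alpha\uparrow$. Your explicit unpacking via $\sum_v|\alpha_v^b|=2|\alpha^b|$ and $\sum_v|\alpha_v^w|=2|\alpha^w|$ is exactly the double-counting underlying that sentence.
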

\begin{proof} Every edge in $\alpha^b$ contributes $2\Theta$ to the left hand side and every edge of $\alpha^w$ contributes $2$ to the left hand side. So, $r+s = 2\Theta|\alpha^b| + 2|\alpha^w| = 2\uparrow \alpha \uparrow$.
\end{proof}

\begin{lemma}~\label{lemma-edges-exiting-alpha-v-dim-3}
\begin{enumerate}
\item~\label{item-one-lemma-edges-exiting-alpha-v-dim-3} For every $v \in V$, $|E_{X_v}(\alpha_v, \overline{\alpha_v})| \geq (q+1-\sqrt{12q})\uparrow \alpha_v \uparrow$.
\item~\label{item-two-lemma-edges-exiting-alpha-v-dim-3} If $v$ is thin, then $|E_{X_v}(\alpha_v, \overline{\alpha_v})| \geq (1+\epsilon)(q+1-\sqrt{12q})\uparrow \alpha_v \uparrow$.
\end{enumerate}
\end{lemma}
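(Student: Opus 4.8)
The plan is to reduce both parts to a single Cheeger-type inequality for the link graph $X_v$, which by Section~\ref{subsection-spherical-buildings} is exactly the graph $Z=S(4,q)^{(1)}$ with black vertices $M_1\cup M_3$ of degree $2(q^2+q+1)=2(q+1)\Theta$ and white vertices $M_2$ of degree $2(q+1)$. Writing $\mathrm{vol}(U)=\sum_{u\in U}\deg_Z(u)$ for $U\subseteq M=M_1\cup M_2\cup M_3$, one has $\mathrm{vol}(\alpha_v)=2(q+1)\!\uparrow\!\alpha_v\!\uparrow$ and $\mathrm{vol}(M)=2(q+1)Q$, and the local minimality of $\alpha$ — equivalently, minimality of $\alpha_v$ in $C^0(X_v,\F_2)$, i.e.\ $\uparrow\!\alpha_v\!\uparrow\,\le Q/2$ — says precisely $\mathrm{vol}(\alpha_v)\le\tfrac12\mathrm{vol}(M)$.

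First I would apply the standard spectral lower bound on the edge boundary of a (connected, in general non-regular) graph with respect to its volume measure: for the test vector $g=\mathbf1_{\alpha_v}-\frac{\mathrm{vol}(\alpha_v)}{\mathrm{vol}(M)}\mathbf1$, which is orthogonal to $\mathbf1$ in the degree inner product, one obtains
\[
|E_{X_v}(\alpha_v,\overline{\alpha_v})|\ \ge\ \lambda\,\frac{\mathrm{vol}(\alpha_v)\,\mathrm{vol}(\overline{\alpha_v})}{\mathrm{vol}(M)}\ =\ \lambda\,\mathrm{vol}(\alpha_v)\,\frac{\mathrm{vol}(\overline{\alpha_v})}{\mathrm{vol}(M)},
\]
where $\lambda=1-\mu_2(Z)$ and $\mu_2(Z)$ is the second–largest eigenvalue of the normalized adjacency operator $N=D^{-1/2}A_ZD^{-1/2}$ ($D$ the degree matrix, $A_Z$ the adjacency matrix). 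For part (1), $\mathrm{vol}(\alpha_v)\le\tfrac12\mathrm{vol}(M)$ forces $\mathrm{vol}(\overline{\alpha_v})/\mathrm{vol}(M)\ge\tfrac12$, hence $|E_{X_v}(\alpha_v,\overline{\alpha_v})|\ge\tfrac\lambda2\mathrm{vol}(\alpha_v)=\lambda(q+1)\!\uparrow\!\alpha_v\!\uparrow$; for part (2), a thin vertex has $\uparrow\!\alpha_v\!\uparrow\,<(1-\epsilon)\tfrac Q2$, so $\mathrm{vol}(\overline{\alpha_v})/\mathrm{vol}(M)>\tfrac{1+\epsilon}{2}$ and the same estimate yields the extra factor $(1+\epsilon)$. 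Thus it remains only to show $\mu_2(Z)\le\frac{\sqrt{12q}}{q+1}$, i.e.\ $\lambda\ge\frac{q+1-\sqrt{12q}}{q+1}$.

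The remaining point — and the technical heart — is this spectral-gap estimate for $Z=S(4,q)^{(1)}$, which is non-regular and $3$-partite, so Lemma~\ref{lemma-one-spherical-eigenvalues-calc} (which handles only the bipartite pieces $Z_{1,2},Z_{2,3},Z_{1,3}$) must be assembled. I would split $L^2(Z)=V_0\oplus V_0^\perp$ with $V_0=\mathrm{span}\{\mathbf1_{M_1},\mathbf1_{M_2},\mathbf1_{M_3}\}$: $N$ preserves $V_0$, where it acts as an explicit $3\times3$ matrix of trace $0$ and determinant $\tfrac14$ having $1$ as an eigenvalue, so its $V_0$-eigenvalues are $1,-\tfrac12,-\tfrac12$ and only the bulk matters. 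On $V_0^\perp=L_0^2(M_1)\oplus L_0^2(M_2)\oplus L_0^2(M_3)$ I would bound $\|N^2|_{V_0^\perp}\|$ block by block, using the incidence identities behind Lemma~\ref{lemma-one-spherical-eigenvalues-calc} — $B_{13}B_{13}^t=q^2I+(q+1)J$, $B_{12}B_{12}^t=(q^2+q)I+J$, $B_{23}^tB_{23}=(q^2+q)I+J$, and $B_{12}B_{23}=(q+1)B_{13}$, where $B_{ij}$ is the $M_i$–$M_j$ incidence matrix: on $V_0^\perp$ all the $J$-terms vanish, the three diagonal blocks of $N^2$ become scalars of size $O(1/q)$, and the only off-diagonal block that is not $O(q^{-3/2})$ is the $M_1$–$M_3$ block $\tfrac{1}{4(q^2+q+1)}B_{13}$, of norm $O(1/q)$. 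Hence $\|N^2|_{V_0^\perp}\|=O(1/q)$, comfortably below $\tfrac{12q}{(q+1)^2}$ for $q$ large, so $\mu_2(Z)=\max\!\big(-\tfrac12,\ \lambda_{\max}(N|_{V_0^\perp})\big)\le\frac{\sqrt{12q}}{q+1}$. (Alternatively one can avoid the normalized Laplacian of $Z$ altogether and estimate $E_{X_v}(\alpha_v,\overline{\alpha_v})$ type by type, bounding each $E(\alpha_v\cap M_i,\alpha_v\cap M_j)$ from above by Proposition~\ref{prop-mixing-for-bipartite} with the eigenvalues of $Z_{ij}$ supplied by Lemma~\ref{lemma-one-spherical-eigenvalues-calc}, and then using $\uparrow\!\alpha_v\!\uparrow\,\le Q/2$.) In either form the real work is the weight bookkeeping — keeping the black/white normalization straight and extracting a clean constant — while the $-\tfrac12$ eigenvalue on $V_0$ is a harmless curiosity; the rest is a direct application of the Cheeger inequality together with local minimality.
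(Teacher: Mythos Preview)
Your argument is correct, and your parenthetical ``alternative'' is in fact exactly what the paper does: it bounds $|E_{X_v}(\alpha_v,\alpha_v)|$ from above by applying Proposition~\ref{prop-mixing-for-bipartite} separately to $Z_{1,2}$, $Z_{2,3}$, $Z_{1,3}$ with the eigenvalues of Lemma~\ref{lemma-one-spherical-eigenvalues-calc}, combines the three cross-terms via Maclaurin's inequality $yz+yw+zw\le\tfrac13(y+z+w)^2$ with $y=q|T_1|$, $z=|T_2|$, $w=q|T_3|$ to obtain $|E_{X_v}(\alpha_v,\alpha_v)|\le\tfrac{1}{3q^3}\!\uparrow\!\alpha_v\!\uparrow^2+\sqrt{3q}\!\uparrow\!\alpha_v\!\uparrow$, and then subtracts this from the total weighted degree $2(q+1)\!\uparrow\!\alpha_v\!\uparrow$; plugging in $\uparrow\!\alpha_v\!\uparrow\le Q/2$ (resp.\ $(1-\epsilon)Q/2$) gives the two parts with the explicit constant $\sqrt{12q}$.

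Your primary route --- a single volume-weighted Cheeger bound on the full tripartite link, together with the spectral decomposition $V_0\oplus V_0^\perp$ of the normalized adjacency --- is genuinely different and arguably cleaner conceptually: it isolates exactly one quantity, $\mu_2(N)$, and the $V_0$ computation (eigenvalues $1,-\tfrac12,-\tfrac12$) together with your block bounds on $V_0^\perp$ is correct. What it costs you is the explicit constant: your $O(1/q)$ bound on $\|N^2|_{V_0^\perp}\|$ has an implied constant you do not pin down, so ``comfortably below $12q/(q+1)^2$'' is asserted rather than verified, whereas the paper's hands-on approach produces $\sqrt{12q}$ on the nose. For the downstream application this is immaterial, but if you want the lemma exactly as stated you should either chase the constants in your block estimates (the dominant contributions are the diagonal $(M_1,M_1)$, $(M_3,M_3)$ blocks at $\approx\tfrac1{4q}$ and the $(M_1,M_3)$ block at $\approx\tfrac1{4q}$) or fall back on the bipartite-mixing route.
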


\begin{proof}
Recall (see Section~\ref{subsection-spherical-buildings} and the notations there) that the link graph $X_v$ is a $3$-partite graph with parts $M_1,M_2,M_3$, $\alpha_v = T_1 \cup T_2 \cup T_3$ where $T_i \subseteq M_i$.
We have,
$$|M_1| = |M_3| = {4 \choose 1}_q \approx q^3,$$
while
$$|M_2| = {4 \choose 2}_q \approx q^4.$$
Assume now $|T_i| = w_i|M_i|$.
In the graph  $Z_{1,2}$: $k' = q^2+q+1$, $k''=q+1$, the largest eigenvalue is $\sqrt{(q+1)(q^2+q+1)} \approx q^{\frac{3}{2}}$, the second largest eigenvalue is $\sqrt{q^2+q} \approx q+1$.
In the graph  $Z_{1,3}$: $k' = k'' = q^2+q+1 \approx q^2$, the largest eigenvalue is $q^2+q+1 \approx q^{2}$, the second largest eigenvalue is $q$.
Using now Proposition~\ref{prop-mixing-for-bipartite} we have:
$$E(T_1,T_2) \leq \frac{q^{\frac{3}{2}} |T_1| |T_2|}{q^{\frac{7}{2}}} + (q+1)\sqrt{|T_1||T_2|} = \frac{1}{q^3}(q|T_1| \cdot |T_2|)+ (q+1)\sqrt{|T_1||T_2|}.$$
$$E(T_3,T_2) \leq \frac{q^{\frac{3}{2}} |T_3| |T_2|}{q^{\frac{7}{2}}} + (q+1)\sqrt{|T_3||T_2|}= \frac{1}{q^3}(q|T_3| \cdot |T_2|)+ (q+1)\sqrt{|T_3||T_2|}.$$
$$E(T_1,T_3) \leq \frac{q^{2} |T_1| |T_3|}{q^{3}} + q\sqrt{|T_1||T_3|}= \frac{1}{q^3}(q|T_1| \cdot q|T_3|)+ q\sqrt{|T_1||T_3|}.$$

Thus,

$|E_{X_v}(\alpha_v, \alpha_v)|\leq \frac{1}{q^3}(q|T_1| \cdot |T_2| + q|T_1| \cdot q|T_3| + q|T_3| \cdot |T_2|) + (q+1)(\sqrt{|T_1||T_2|} + \sqrt{|T_3||T_2|} + \sqrt{|T_1||T_3|})$.

Now using the Maclaurin's inequality $(yz+yw+zw) \leq \frac{1}{3}(y+z+w)^2$ we get

\begin{eqnarray}
|E_{X_v}(\alpha_v, \alpha_v)| & \leq & \frac{1}{q^3} \cdot \frac{1}{3}(q|T_1| + |T_2| + q|T_3|)^2 + (q+1)(\sqrt{|T_1||T_2|} + \sqrt{|T_3||T_2|} + \sqrt{|T_1||T_3|}) \\
&  \leq    & \frac{1}{q^3} \cdot \frac{1}{3}\uparrow \alpha_v \uparrow^2 + 3\sqrt{q}(\sqrt{q|T_1|\cdot|T_2| + q|T_3| \cdot|T_2| + q|T_1| \cdot q |T_3|}) \\
& \leq  & \frac{1}{q^3} \cdot \frac{1}{3}\uparrow \alpha_v \uparrow^2 + \sqrt{3q} \uparrow \alpha_v \uparrow.
\end{eqnarray}

Since the degree of a vertex in $M_2$ is $2(q+1)$ while for a vertex in $M_1 \cup M_3$ it is $2(q+1)\Theta$, we obtain

$|E_{X_v}(\alpha_v, \overline{\alpha_v})| \geq 2(q+1)\uparrow \alpha_v \uparrow -\frac{1}{q^3} \cdot \frac{2}{3}\uparrow \alpha_v\uparrow^2 - 2\sqrt{3q}\uparrow \alpha_v \uparrow = (2(q+1) -\frac{1}{q^3} \cdot \frac{2}{3}\uparrow \alpha_v \uparrow - \sqrt{12q})\uparrow \alpha_v \uparrow$.

Moreover, since $\uparrow \alpha_v \uparrow \leq \frac{Q}{2} \approx \frac{3}{2}q^4$ we get:

$|E_{X_v}(\alpha_v, \overline{\alpha_v})| \geq (2(q+1) -\frac{1}{q^3} \cdot \frac{2}{3}\cdot\frac{3}{2}q^4 - \sqrt{12q})\uparrow \alpha_v \uparrow
\geq (q+1 -\sqrt{12q})\uparrow \alpha_v \uparrow$,

and Part~\ref{item-one-lemma-edges-exiting-alpha-v-dim-3} of the lemma is proved.

Now, if $v$ is thin then $\uparrow \alpha_v \uparrow \leq (1-\epsilon) \cdot \frac{Q}{2} = (1-\epsilon) \cdot \frac{3}{2}q^4$; so in this case:

$|E_{X_v}(\alpha_v, \overline{\alpha_v})| \geq (2(q+1) -\frac{1}{q^3} \cdot \frac{2}{3} \cdot (1-\epsilon) \cdot \frac{3}{2}q^4 - \sqrt{12q})\uparrow \alpha_v \uparrow \geq (1+\epsilon)(q+1 -\sqrt{12q})\uparrow \alpha_v \uparrow$,

and Part~\ref{item-two-lemma-edges-exiting-alpha-v-dim-3} of the lemma is also proved.
\end{proof}

We can deduce the following inequality

\begin{lemma}~\label{lemma-2t-1+2t-2-dim-3}
$2t_1 + 2t_2 \geq 2(q+1-\sqrt{12q})\uparrow \alpha \uparrow + \epsilon (q+1-\sqrt{12q}) r$.
\end{lemma}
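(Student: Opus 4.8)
The plan is to repeat, essentially verbatim, the argument used for the two-dimensional analogue Lemma~\ref{lemma-2t-1+2t-2}, with the counting norm $|\alpha_v|$ replaced by the weighted norm $\uparrow\alpha_v\uparrow$ and the spectral input upgraded from the single link graph to the three-partite link estimate. Concretely, I would start from Lemma~\ref{lemma-traingles-counting-in-dim-3}(\ref{item-three-lemma-traingles-counting-in-dim-3}), which gives
$$2t_1+2t_2=\sum_{v\in X(0)}|E_{X_v}(\alpha_v,\overline{\alpha_v})|=\sum_{v\in W}|E_{X_v}(\alpha_v,\overline{\alpha_v})|,$$
the reduction to $W$ being valid because $\alpha_v=\emptyset$ for $v\notin W$. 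Then split $W=R\sqcup S$ into thin and thick vertices; this dichotomy is exhaustive precisely because local minimality forces $\uparrow\alpha_v\uparrow\le Q/2$ for every $v$, which is what makes the notions of thin/thick well posed.

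Next I would bound each summand: for $v\in R$ apply Lemma~\ref{lemma-edges-exiting-alpha-v-dim-3}(\ref{item-two-lemma-edges-exiting-alpha-v-dim-3}) to get $|E_{X_v}(\alpha_v,\overline{\alpha_v})|\ge(1+\epsilon)(q+1-\sqrt{12q})\uparrow\alpha_v\uparrow$, and for $v\in S$ apply Lemma~\ref{lemma-edges-exiting-alpha-v-dim-3}(\ref{item-one-lemma-edges-exiting-alpha-v-dim-3}) to get $|E_{X_v}(\alpha_v,\overline{\alpha_v})|\ge(q+1-\sqrt{12q})\uparrow\alpha_v\uparrow$. Summing over $R$ and $S$ separately and using $r=\sum_{v\in R}\uparrow\alpha_v\uparrow$, $s=\sum_{v\in S}\uparrow\alpha_v\uparrow$ yields
$$2t_1+2t_2\ \ge\ (1+\epsilon)(q+1-\sqrt{12q})\,r+(q+1-\sqrt{12q})\,s\ =\ (q+1-\sqrt{12q})(r+s)+\epsilon(q+1-\sqrt{12q})\,r.$$
Finally, invoke Lemma~\ref{lemma-$r+s$-dim-3}, $r+s=2\uparrow\alpha\uparrow$, to obtain exactly the claimed inequality. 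Note that, in contrast to the $d=2$ case, no factor $\tfrac12$ appears here: it has been absorbed into the statement of Lemma~\ref{lemma-edges-exiting-alpha-v-dim-3}, where the degree bookkeeping for vertices in $M_1\cup M_3$ versus $M_2$ (degrees $2(q+1)\Theta$ versus $2(q+1)$) is already matched against the $\Theta$-weighting built into $\uparrow\cdot\uparrow$.

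I do not expect a genuine obstacle in this step: all the real content—the Maclaurin/mixing estimate for the three-partite link and the triangle–link correspondence—has already been extracted in Lemmas~\ref{lemma-traingles-counting-in-dim-3} and~\ref{lemma-edges-exiting-alpha-v-dim-3}, so what remains is purely the additive bookkeeping above. The only point requiring a line of care is the very first reduction, namely that summing $|E_{X_v}(\alpha_v,\overline{\alpha_v})|$ over all vertices is the same as summing over $W$, together with the observation that the thin/thick split is exhaustive on $W$; both are immediate from local minimality.
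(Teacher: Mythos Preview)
Your proposal is correct and follows essentially the same approach as the paper's proof: start from Lemma~\ref{lemma-traingles-counting-in-dim-3}(\ref{item-three-lemma-traingles-counting-in-dim-3}), split the sum over $W=R\sqcup S$, apply the two parts of Lemma~\ref{lemma-edges-exiting-alpha-v-dim-3} to the thin and thick pieces respectively, and finish with Lemma~\ref{lemma-$r+s$-dim-3}. Your remark about the missing factor $\tfrac12$ (compared to the $d=2$ case) being absorbed into the statement of Lemma~\ref{lemma-edges-exiting-alpha-v-dim-3} is also correct and a useful observation.
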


\begin{proof}
\begin{eqnarray}
2t_1 + 2t_2 = \sum_{v \in W}E_{X_v}(\alpha_v, \overline{\alpha_v})
& =    &  \sum_{v \in R}E_{X_v}(\alpha_v, \overline{\alpha_v}) + \sum_{v \in S}E_{X_v}(\alpha_v, \overline{\alpha_v}) \\
& \geq &  \sum_{v \in R}(1+\epsilon)(q+1-\sqrt{12q})\uparrow \alpha_v \uparrow + \sum_{v \in S}(q+1-\sqrt{12q})\uparrow \alpha_v \uparrow \\
& =    &  (1+\epsilon)(q+1-\sqrt{12q})r + (q+1-\sqrt{12q})s \\
& =    &  (q+1-\sqrt{12q})(r+s)+\epsilon (q+1-\sqrt{12q})r \\
& =    &2(q+1-\sqrt{12q})\uparrow \alpha \uparrow + \epsilon(q+1-\sqrt{12q})r
\end{eqnarray}
In the first equation we have used Lemma~\ref{lemma-traingles-counting-in-dim-3}, part (\ref{item-three-lemma-traingles-counting-in-dim-3}) and in the last one Lemma~\ref{lemma-$r+s$-dim-3}. The inequality follows from Lemma~\ref{lemma-edges-exiting-alpha-v-dim-3}.
\end{proof}

\begin{lemma}\label{lemma-t-1-minus-3t-3-dim-3}
$t_1 - 3t_3 \geq \epsilon (q+1-\sqrt{12q})r - 2\sqrt{12q}\uparrow\alpha\uparrow$.
\end{lemma}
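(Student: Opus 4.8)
The plan is to obtain this bound by a single algebraic step, exactly mirroring the derivation of Lemma~\ref{lemma-t-1-minus-3t-3} in the two-dimensional setting; all of the genuine analytic content has already been absorbed into the two inputs I will use. Specifically, the local expansion estimate of Lemma~\ref{lemma-edges-exiting-alpha-v-dim-3}, summed over vertices, produces the inequality of Lemma~\ref{lemma-2t-1+2t-2-dim-3}, while the triangle--edge incidence count gives the identity of Lemma~\ref{lemma-traingles-counting-in-dim-3}(\ref{item-one-lemma-traingles-counting-in-dim-3}). The lemma to be proved is simply what one gets by taking the difference of these two relations in a way that isolates the quantity $t_1 - 3t_3$.

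Concretely, I would subtract the identity $t_1 + 2t_2 + 3t_3 = 2(q+1)\uparrow\alpha\uparrow$ of Lemma~\ref{lemma-traingles-counting-in-dim-3}(\ref{item-one-lemma-traingles-counting-in-dim-3}) from the inequality $2t_1 + 2t_2 \geq 2(q+1-\sqrt{12q})\uparrow\alpha\uparrow + \epsilon (q+1-\sqrt{12q}) r$ of Lemma~\ref{lemma-2t-1+2t-2-dim-3}. Subtracting an equality from a valid inequality preserves the inequality, so the left-hand side collapses to $(2t_1 + 2t_2) - (t_1 + 2t_2 + 3t_3) = t_1 - 3t_3$, the $2t_2$ contributions cancelling. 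On the right-hand side the term $\epsilon (q+1-\sqrt{12q}) r$ is carried along unchanged, while the two multiples of $\uparrow\alpha\uparrow$ telescope as $2(q+1-\sqrt{12q})\uparrow\alpha\uparrow - 2(q+1)\uparrow\alpha\uparrow = -2\sqrt{12q}\uparrow\alpha\uparrow$. This yields precisely $t_1 - 3t_3 \geq \epsilon (q+1-\sqrt{12q}) r - 2\sqrt{12q}\uparrow\alpha\uparrow$, as claimed.

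There is no real obstacle here: the only points requiring (minimal) care are to keep track of the direction of the inequality when the equality is subtracted, and to verify that the $\uparrow\alpha\uparrow$-coefficients combine correctly. The step is pure bookkeeping; its purpose is to extract a clean lower bound on $t_1 - 3t_3$ (and hence, since $t_3 \geq 0$ and $|\delta_1(\alpha)| = t_1 + t_3$ by Lemma~\ref{lemma-traingles-counting-in-dim-3}(\ref{item-two-lemma-traingles-counting-in-dim-3}), on $|\delta_1(\alpha)|$) once one later shows, using the near-Ramanujan property of $X^{(1)}$, that $r$ is a fixed positive fraction of $\uparrow\alpha\uparrow$.
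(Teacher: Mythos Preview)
Your proposal is correct and is precisely the paper's own argument: subtract the identity of Lemma~\ref{lemma-traingles-counting-in-dim-3}(\ref{item-one-lemma-traingles-counting-in-dim-3}) from the inequality of Lemma~\ref{lemma-2t-1+2t-2-dim-3}, cancelling the $2t_2$ terms and combining the $\uparrow\alpha\uparrow$-coefficients as you did.
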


\begin{proof} Subtract equation (\ref{item-one-lemma-traingles-counting-in-dim-3}) in Lemma~\ref{lemma-traingles-counting-in-dim-3} form the equation obtained in Lemma~\ref{lemma-2t-1+2t-2-dim-3}.
\end{proof}

Our next goal now is to show the existence of $\eta_1 > 0$, such that for every $\alpha$ with $||\alpha|| \leq \eta_1$, the contribution $r$ of the thin edges, is at least some fixed fraction of $\uparrow \alpha \uparrow$. This will prove that for $q$ large enough $t_1 \geq cq\uparrow\alpha\uparrow$, and the case $i=1$ of Theorem~\ref{thm-isoperimetric -inequalities} will follow with a constant $\epsilon_1$, which is independent of $q$ (for all $q \gg 0$). Indeed, $$||\delta_1(\alpha)|| = \frac{(q+1)(t_1(\alpha)+t_3(\alpha))}{{4 \choose 3}|X(3)|}\geq \frac{q \cdot t_1(\alpha)}{4 |X(3)|} \geq \frac{c q^2 \uparrow\alpha\uparrow}{4 |X(3)|} = \frac{1}{4|x(3)|}\cdot \frac{{4 \choose 2} |X(3)|}{(q+1)^2} \cdot c q^2 ||\alpha|| \geq \epsilon_1||\alpha||,$$

for a suitable constant $\epsilon_1 \geq 0$

Up to now we have used only the local structure of $X$. Now we will use the global structure; the fact that its $1$-skeleton is an almost Ramanujan graph.

Recall, that by Corollary~\ref{cor-bounding-lambda-in-quotient-Ram-complex}, the second largest eigenvalue of the adjacency matrix of $X^{(1)}$ is bounded from above by $(d+1)^{d+1}\sqrt{k} \approx 4^4\sqrt{q^4} = 4^4q^2$. Now the degree of a vertex is $k \approx q^4$ so $\lambda_1(X^{(1)}) \geq q^4 - 4^4q^2$. Note now that for every vertex in $S$ we have $\uparrow \alpha_v\uparrow \geq (1-\epsilon)\frac{Q}{2}$.
So, every $v \in S$ either touches at least $\frac{(1-\epsilon)q^4}{2}$ white edges or at least $\frac{(1-\epsilon)q^3}{2}$ black edges.
Let us denote by $S^b$ the vertices in which the first case occurs and by $S^w$ the vertices in which the second case occurs (it could be that both cases occur at $v$). Then $|S^b| \leq \frac{2|\alpha^b|}{(1-\epsilon)q^3/2}$ and $|S^w| \leq \frac{2|\alpha^w|}{(1-\epsilon)q^4/2}$.

Thus,
$|S^b \cup S^w| \leq |S^b| + |S^w| \leq \frac{2q|\alpha^b|}{(1-\epsilon)q \cdot q^3/2} + \frac{2|\alpha^w|}{(1-\epsilon)q^4/2} = \frac{2}{(1-\epsilon)q^4/2}(q|\alpha^b| + |\alpha^w|) \leq  \frac{4}{(1-\epsilon)q^4}\uparrow\alpha\uparrow$.

Hence $|S| \leq \frac{4\uparrow\alpha\uparrow}{(1-\epsilon)q^4}$. Let $n: = |V| = |X(0)|$, we have,

\begin{eqnarray}
|E(S)| & \leq & \frac{1}{2}(q^4-\frac{|\overline{S}|}{n}\lambda_1(X^{(1)}))|S| \\
     & \leq & \frac{1}{2}(q^4-\frac{|\overline{S}|}{n}(q^4-4^4q^2))|S| =  \frac{1}{2}(q^4(1-\frac{|\overline{S}|}{n})+4^4q^2 \frac{|\overline{S}|}{n})|S|\\
     & \leq & \frac{1}{2}(q^4 \frac{|S|}{n}+4^4q^2)|S| \leq \frac{1}{2}(\frac{4\uparrow\alpha\uparrow}{(1-\epsilon)n}+4^4q^2)|S|
\end{eqnarray}

Note that we assumed that $||\alpha|| \leq \eta_1$. As $||\alpha|| = \frac{(q+1)^2}{{4 \choose 2}|X(3)|} \uparrow\alpha\uparrow$ and $|X(3)| \approx \frac{n \cdot q^6}{4}$, we have,

$$ \uparrow\alpha\uparrow \leq \frac{3}{2}\eta_1q^4 n. $$

Hence,

\begin{eqnarray}
|E(S)| & \leq & \frac{1}{2} (\frac{4 \frac{3}{2}\eta_1q^4 n}{(1-\epsilon)n} + 4^4q^2)|S| \\
       & \leq & (\frac{3\eta_1q^4}{(1-\epsilon)} + 128q^2) \frac{4\uparrow\alpha\uparrow}{(1-\epsilon)q^4} \\
       & \leq & (\frac{12\eta_1}{(1-\epsilon)^2} + \frac{512}{(1-\epsilon)q^2}) \uparrow\alpha\uparrow
\end{eqnarray}

Thus, $|E(S)| \leq (\frac{12\eta_1}{(1-\epsilon)^2} + \frac{512}{(1-\epsilon)q^2}) (\Theta |\alpha^b| + |\alpha^w|)$. Thus, for $q \gg 0$, only a small fraction (less than $\mu =  \frac{12}{(1-\epsilon)q^{0.1}}$) of the black edges are between thick vertices and even a smaller fraction of the white ones. Namely, all the rest have at least one thin end vertex. This implies that $r \geq (1-\mu) \Theta |\alpha^b| + (1-\mu)  |\alpha^w| = (1-\mu)\uparrow\alpha\uparrow$. Theorem~\ref{thm-isoperimetric -inequalities} is now proved also for $i=1$.


Let us mention that along the way we have proved two facts which are worth formulating separately.

\begin{cor}\label{cor-from-3-dim-result} In the notations and assumptions as above. If $q \gg 0$, then we have:
\begin{enumerate}
\item If $\alpha \in B^1(X,\F_2)$ is a locally minimal coboundary with $||\alpha|| < \frac{1}{q^{1.1}}$ then $\alpha = 0$.
\item If $\alpha \in Z^1(X,\F_2) \setminus B^1(X,\F_2)$ then $||\alpha|| > \eta_1$. In particular, for a fixed $q$, every representative of a non-trivial cohomology class has linear size support.
\end{enumerate}
\end{cor}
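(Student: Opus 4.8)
The plan is to derive both parts directly from the case $i=1$ of Theorem~\ref{thm-isoperimetric -inequalities} just proved; no new idea is needed beyond combining the relevant definitions.

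For part (1): take a locally minimal $\alpha\in B^1(X,\F_2)$ with $\|\alpha\|<q^{-1.1}$. Since $B^1\subseteq Z^1$ we have $\delta_1(\alpha)=0$, hence $\|\delta_1(\alpha)\|=0$. For $q\gg0$ the bound $q^{-1.1}$ does not exceed the threshold $\eta_1$ of Theorem~\ref{thm-isoperimetric -inequalities} (it is, up to a constant, exactly the $\eta_1$ that the proof of the case $i=1$ produces), so that theorem applies to $\alpha$ and yields $0=\|\delta_1(\alpha)\|\ge\epsilon_1\|\alpha\|$ with $\epsilon_1>0$; since the weights are positive this forces $\|\alpha\|=0$, i.e.\ $\alpha=0$.

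For part (2): let $\alpha\in Z^1(X,\F_2)\setminus B^1(X,\F_2)$, and first replace it by a locally minimal representative of its cohomology class. By Proposition~\ref{prop-loc-min-properties}(\ref{item-one-prop-loc-min-properties}) there is a locally minimal $\alpha'$ with $\alpha'\equiv\alpha\pmod{B^1}$ and $\|\alpha'\|\le\|\alpha\|$; since $\alpha'-\alpha\in B^1\subseteq Z^1$ we keep $\alpha'\in Z^1$, and since $\alpha\notin B^1$ we keep $\alpha'\notin B^1$, so $\alpha'\ne0$. If we had $\|\alpha'\|\le\eta_1$, Theorem~\ref{thm-isoperimetric -inequalities} ($i=1$) would give $\|\delta_1(\alpha')\|\ge\epsilon_1\|\alpha'\|>0$, contradicting $\delta_1(\alpha')=0$; hence $\|\alpha\|\ge\|\alpha'\|>\eta_1$. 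For the ``linear support'' clause I would translate the norm bound into an edge count: writing $\uparrow \alpha \uparrow=\Theta|\alpha^b|+|\alpha^w|$ as in the proof, $\|\alpha\|=\frac{(q+1)^2}{{4 \choose 2}|X(3)|}\uparrow \alpha \uparrow$ and $\uparrow \alpha \uparrow\le\Theta|\alpha|$ since $\Theta\ge1$, so the number of edges in the support satisfies $|\alpha|\ge\frac{{4 \choose 2}|X(3)|}{(q+1)^2\Theta}\|\alpha\|>\frac{{4 \choose 2}|X(3)|}{(q+1)^2\Theta}\,\eta_1$; using $(q+1)^2\Theta=(q+1)(q^2+q+1)\approx q^3$ and $|X(3)|\approx\frac{nq^6}{4}$ this is of order $q^3\eta_1 n$, i.e.\ linear in $n=|X(0)|$ for each fixed $q$.

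I do not expect a real obstacle here: all the substance sits in Theorem~\ref{thm-isoperimetric -inequalities}. The only points that need a little care are making sure the threshold $q^{-1.1}$ of part (1) is compatible with the $\eta_1$ coming out of the proof of the case $i=1$, and the small amount of bookkeeping in the reduction to a locally minimal cocycle in part (2).
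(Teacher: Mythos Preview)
Your proof is correct and follows essentially the same route as the paper: both parts are immediate consequences of the case $i=1$ of Theorem~\ref{thm-isoperimetric -inequalities}, with part (2) requiring only the passage to a locally minimal representative via Proposition~\ref{prop-loc-min-properties}. Your added unpacking of the ``linear support'' clause is a bit more explicit than the paper's, but otherwise the arguments coincide.
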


\begin{proof} The first item follows immediately since $\delta(\alpha)=0$. For the second item we observe that every $\alpha \in Z^1$, can be replaced by a locally minimal representative $\alpha'$ with $||\alpha'|| \leq ||\alpha||$ and $\alpha'  = \alpha (\mbox{mod }B^1)$. Applying now Theorem~\ref{thm-isoperimetric -inequalities} for $\alpha'$, we deduce the result.
\end{proof}

\begin{remark} In our proof of the Theorem $\epsilon_1$ turns out to be independent of $q$ (provided $q \gg 0$), but $\eta_1$ does depend on $q$ (we choose $\eta_1 \approx \frac{1}{q^{1.1}}$). One can improve the proof to make also $\eta_1$ independent of $q$ by considering the "black skeleton" of $X$, i.e., the subgraph of $X^{(1)}$ consisting of the black edges and only them.
\end{remark}

Note that by Proposition~\ref{prop-non-trivial-first-cohomology}, there are indeed cases with $H^1(X,\F_2) \neq \{0\}$, so the second item of Corollary~\ref{cor-from-3-dim-result} is a non-vacuous systolic result. Such results are of potential interest for quantum error-correcting codes (see \cite{LubotzkyGuth},\cite{Zemor} and the references therein).

We move now to the third case, i.e., $i=2$, in which we have to prove $2$-expansion. This case is by far more difficult (and we have to overcome along the way the difficulties of the case $i=1$, but also much more.) This will be the topic of the next section.

\section{Expansion of $2$-cochains in $3$-dimensional Ramanujan complexes}
\label{section-three-dim-coboundary-exp}

In this section we prove the case $i=2$ of Theorem~\ref{thm-isoperimetric -inequalities}. We prove that for $q\gg 0$, exists $\epsilon''>0$ such that if $\alpha \in C^2(X,\F_2)$ is locally minimal with $|\alpha | < q^3 |X(0)|$ then $|\delta (\alpha)| \geq \epsilon'' q|\alpha|$. This indeed will prove the theorem: recall that every $2$-cell is contained in $q+1$ pyramides and altogether there are approximately $q^6|X(0)|$ pyramides.
Thus,
$$||\alpha|| \approx \frac{(q+1)|\alpha|}{{4 \choose 3} q^6 |X(0)|} \approx c' \frac{|\alpha|}{q^5 |X(0)|},$$
and  $||\delta(\alpha)|| = \frac{|\delta(\alpha)|}{q^6 |X(0)|}$. Hence, we can deduce the result with $\mu_3 = \frac{c''}{q^2}$ and $\epsilon_3$ independent of $q$ (provided $q \gg 0$).

First recall again that the link $X_v$ of every vertex $v$ of $X$ is the $2$-dimensional spherical building $S(4,q)$. The vertices of $X_v$ are of two types: the one corresponding to subspaces of dimensions $1$ and $3$ of $\F_q^4$, the {\em black vertices}, and the other type corresponding to subspaces of dimension $2$, which we call the {\em white vertices}. A black vertex is of degree $2(q^2+q+1)$, while a white one is of degree $2(q+1)$. There are approximately $q^3$ black vertices and approximately $q^4$ white ones.

Given a vertex $v$, an edge $e$ of $X$, with $v \in e$, gives a unique vertex in $X_v$, which can be black or white, and we then call $e$ black or white, accordingly.




We start with local considerations. Since $\alpha \in C^2(X,\F_2)$ is locally minimal, for every edge $e$ of $X$, $|\alpha_e| \leq \frac{|X_e(0)|}{2}$. Indeed, $\alpha$ being locally minimal means that for every vertex $v$ of $X$, $\alpha_v \in C^1(X_v, \F_2)$ is minimal and in particular, it is locally minimal as a cochain of $X_v$. Thus, at every vertex $w$ of $X_v$ $\alpha_v$ contains at most half of the edges around $w$. This exactly means that the number of triangles in $\alpha$ containing $e$ is at most half of all the triangles containing $e$.

For $i=0,\cdots,4$ we denote by $t_i$ the number of pyramides ($3$-cells) in $X$ which contain exactly $i$ triangles of $\alpha$.

\begin{lemma}~\label{lemma-t-1+2t-2+3t-3+4t-4}
\begin{enumerate}
\item $\sum_{e \in X(1)}|\alpha_e| = 3|\alpha|$.
\item~\label{item-two-in-lemma-t-1+2t-2+3t-3+4t-4} $t_1+2t_2+3t_3+4t_4=(q+1)|\alpha|$.
\end{enumerate}
\end{lemma}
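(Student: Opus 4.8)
The plan is to prove both identities by elementary double counting of incidences between cells of consecutive dimensions, in the same spirit as Lemma~\ref{lemma-traingles-counting-in-dim-two} and Lemma~\ref{lemma-traingles-counting-in-dim-3}; no spectral or global input is needed here, only the local combinatorics of the building $\calB=\tilde{A}_3(F)$, which the quotient $X$ inherits.

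For part (1) I would count the set $\Sigma$ of incident pairs $(e,\sigma)$ with $e \in X(1)$, $\sigma \in \alpha$ (viewed as a subset of $X(2)$) and $e \subseteq \sigma$. Summing over $e$ first: a triangle $\sigma \supseteq e$ lies in $\alpha$ precisely when the vertex $\sigma \setminus e$ of the link $X_e$ lies in the induced $0$-cochain $\alpha_e \in C^0(X_e,\F_2)$ (see the construction of the cochain induced on a link in Section~\ref{subsection-notions-of-minimality}), so the number of $\sigma$'s paired with a fixed $e$ equals $|\alpha_e|$, whence $|\Sigma| = \sum_{e \in X(1)} |\alpha_e|$. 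Summing over $\sigma$ first: each triangle has exactly three edges (the three $2$-subsets of the $3$-element set $\sigma$), so each $\sigma \in \alpha$ is counted three times and $|\Sigma| = 3|\alpha|$. Equating the two counts gives $\sum_{e \in X(1)} |\alpha_e| = 3|\alpha|$.

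For part (2) I would count the set $\Pi$ of incident pairs $(\sigma,P)$ with $\sigma \in \alpha$, $P \in X(3)$ and $\sigma \subseteq P$. Summing over $P$ first: a pyramid $P$ containing exactly $i$ triangles of $\alpha$ contributes exactly $i$ such pairs, so by the definition of the $t_i$ we obtain $|\Pi| = \sum_{i=0}^{4} i\, t_i = t_1 + 2t_2 + 3t_3 + 4t_4$ (the $i=0$ term drops out). Summing over $\sigma$ first: I use the structural fact, recalled in Section~\ref{section-buildings}, that in $\calB=\tilde{A}_3(F)$ — and hence in its quotient $X$ — every $2$-cell is contained in exactly $q+1$ $3$-cells. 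Thus $|\Pi| = (q+1)|\alpha|$, and equating the two counts gives $t_1+2t_2+3t_3+4t_4 = (q+1)|\alpha|$.

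I do not anticipate any genuine obstacle: both parts are pure bookkeeping, and the only external ingredient — that every triangle lies in $q+1$ pyramids — is part of the standard description of Bruhat--Tits buildings of type $\tilde{A}_d$ already recorded in the excerpt. The one point worth a sentence of care is the identification of $|\alpha_e|$ with the number of triangles of $\alpha$ through $e$, which is immediate once the induced-cochain construction on a link is unwound; this is exactly the fact used a few lines earlier to conclude $|\alpha_e| \leq \tfrac{1}{2}|X_e(0)|$ from local minimality.
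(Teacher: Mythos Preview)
Your proof is correct and follows exactly the same double-counting approach as the paper, which simply notes that the first item follows from every triangle having three edges and the second from every triangle being contained in exactly $q+1$ pyramids. You have just spelled out the incidence counts in more detail.
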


\begin{proof} The first item follows from the fact that every triangle has three edges. The second is because every triangle is contained in exactly $q+1$ pyramides.
\end{proof}

\begin{lemma}\label{lemma-3t-1+4t-2+3t-3} $\sum_{e \in X(1)} E_{X_e}(\alpha_e,\overline{\alpha_e})=3 t_1 +4 t_2 +3 t_3$.
\end{lemma}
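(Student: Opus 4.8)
The plan is to prove this by reversing the order of summation, in the same spirit as Lemma~\ref{lemma-traingles-counting-in-dim-3}(\ref{item-three-lemma-traingles-counting-in-dim-3}) but one dimension higher. First I would recall that the link $X_e$ of an edge $e\in X(1)$ is the graph whose vertices are the vertices $w$ with $e\cup\{w\}\in X(2)$ and whose edges are the $3$-cells of $X$ containing $e$; under this identification $\alpha_e\in C^0(X_e,\F_2)$ is (the indicator of) the set of those $w$ with $e\cup\{w\}\in\alpha$, and a $3$-cell $T\supset e$ contributes to $E_{X_e}(\alpha_e,\overline{\alpha_e})$ precisely when exactly one of the two triangular faces of $T$ containing $e$ lies in $\alpha$. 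Hence
\[
\sum_{e\in X(1)}E_{X_e}(\alpha_e,\overline{\alpha_e})=\sum_{T\in X(3)}\ \#\bigl\{\,e\subset T\ :\ e\in X(1),\ \text{exactly one face of }T\text{ containing }e\text{ is in }\alpha\,\bigr\}.
\]

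Next I would evaluate the inner count for a fixed tetrahedron $T=\{v_0,v_1,v_2,v_3\}$. Its four triangular faces $F_j:=T\setminus\{v_j\}$ and its six edges correspond to one another via $e\mapsto\{F_j,F_k : F_j,F_k\supset e\}$ (equivalently, the edge $\{v_a,v_b\}$ matches the unordered pair of faces indexed by the complementary pair $\{c,d\}=\{0,1,2,3\}\setminus\{a,b\}$), and each edge of $T$ lies in exactly two of its four faces. So the inner count is the number of unordered pairs of faces of $T$ exactly one of which is in $\alpha$; if $i_T$ denotes the number of faces of $T$ lying in $\alpha$, this count equals $i_T(4-i_T)$, i.e.\ $0,3,4,3,0$ for $i_T=0,1,2,3,4$ respectively.

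Finally, grouping the $3$-cells according to the value of $i_T$ — that is, into the classes enumerated by $t_0,t_1,t_2,t_3,t_4$ — yields
\[
\sum_{e\in X(1)}E_{X_e}(\alpha_e,\overline{\alpha_e})=\sum_{T\in X(3)} i_T(4-i_T)=3t_1+4t_2+3t_3,
\]
which is the claimed identity. I do not expect any real obstacle: the only points requiring a moment's care are the identification of the edges of $X_e$ with the $3$-cells of $X$ through $e$ (so that $E_{X_e}(\alpha_e,\overline{\alpha_e})$ is literally a count of such $3$-cells) and the elementary incidence fact that each edge of a tetrahedron sits in exactly two of its faces; everything else is the bookkeeping of the reversed double sum.
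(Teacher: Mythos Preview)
Your proof is correct and follows essentially the same approach as the paper: both reverse the summation and compute, for each pyramid $P\in X(3)$, the number of edges $e\subset P$ for which $P$ contributes to $E_{X_e}(\alpha_e,\overline{\alpha_e})$, doing a case analysis on the number $i_P$ of faces of $P$ lying in $\alpha$. Your formula $i_P(4-i_P)$ is a slightly cleaner packaging of the paper's explicit case-by-case count, but the argument is the same.
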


\begin{proof}
Recall that $\alpha_e$ can be considered as a set of vertices in the $(q+1)$-regular graph $X_e$, which is the link of $X$ at $e$. If $P$ is a pyramid with one triangle from $\alpha$, then for $3$ out of the $6$ edges of $P$, $|\alpha_e|=1$  and for $3$ of them $|\alpha_e|=0$. The first $3$ contributes, each, $1$ to the left hand side and the later have no contribution. If $P$ has $2$ triangles from $\alpha$, then for one edge $|\alpha_e| =2$ but this edge contributes nothing to the LHS, since $P$ represents then an edge of $X_e$ from $\alpha_e$ to $\alpha_e$. For $4$ other edges of $P$, $|\alpha_e|=1$ and each contributes $1$ to the LHS. For the last edge, $|\alpha_e|=0$ and clearly no contribution to the LHS. A similar consideration justifies the claim about the $3t_3$ contribution (or by duality to $t_1$). Pyramids with either $0$ or $4$ triangles from $\alpha$ contributes nothing to the LHS.
\end{proof}

\begin{definition}(Thin/thick edge)~\label{def-thin-thick-edge}
An edge $e \in X(1)$ is called {\em thin} if $|\alpha_e| \leq |X_e(0)|^{0.9}$ and {\em thick} otherwise.

Denote:

$R$ (resp. $S$) - the set of thin (resp. thick) edges.

$r := \sum_{e \in R}|\alpha_e|$.

$s := \sum_{e \in S}|\alpha_e|$.

So, by Lemma~\ref{lemma-t-1+2t-2+3t-3+4t-4}, $r+s = 3|\alpha|$.
\end{definition}

The link graph $X_e$ of every edge $e$ of $X$ is either the "points versus lines graph" of the projective plane $\P_2(q)$ over $\F_q$ or the complete $(q+1)$-bipartite graph on $2(q+1)$ vertices. Indeed, if $e$ is a white edge, $X_e$ is the complete bipartite ($q+1$)-regular graph on $2(q+1)$ vertices. While if $e$ is black, $X_e$ is the "points versus lines" graph of the projective plane of $\F_q$, i.e., ($q+1$)-regular bipartite on $2(q^2+q+1)$ vertices. When $e$ is a thick/thin edge of $X$, we will also consider it as thick/thin vertex of $X_v$, for $v \in e$.
In either case, just as with the $2$-dimensional complexes studied in Section~\ref{section-two-dim-proof}, $\lambda_1(X_e) \geq q+1-\sqrt{q}$.
The next lemma follows from Proposition~\ref{prop-cheeger}.

\begin{lemma}~\label{lemma-edges-out-of-thin-edge}
\begin{enumerate}
\item For every $e \in X(1)$, $E_{X_e} (\alpha_e, \overline{\alpha_e}) \geq \frac{1}{2}(q+1 - \sqrt{q})|\alpha_e|$.
\item If $e$ is thin then $E_{X_e} (\alpha_e, \overline{\alpha_e}) \geq (q+1-q^{0.9})|\alpha_e|$.
\end{enumerate}
\end{lemma}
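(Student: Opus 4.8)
The plan is to apply the Cheeger inequality of Proposition~\ref{prop-cheeger}(1) to the link graph $X_e$, exactly in the spirit of Lemma~\ref{lemma-edges-exiting-alpha-v} for vertex links. The only structural facts needed are already recorded just above the statement: $X_e$ is a $(q+1)$-regular bipartite graph --- either $K_{q+1,q+1}$ when $e$ is white, or the incidence ("points versus lines") graph of $\P^2(\F_q)$ when $e$ is black --- with $\lambda_1(X_e) \geq q+1-\sqrt q$ and $|X_e(0)| \geq 2(q+1)$; moreover local minimality of $\alpha$ forces $|\alpha_e| \leq \tfrac12 |X_e(0)|$, hence $|\overline{\alpha_e}| \geq \tfrac12|X_e(0)|$.

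For part (1), I would take $W=\alpha_e$ in Proposition~\ref{prop-cheeger}(1), which yields
$$E_{X_e}(\alpha_e,\overline{\alpha_e}) \geq \frac{|\alpha_e|\,|\overline{\alpha_e}|}{|X_e(0)|}\,\lambda_1(X_e) = \frac{|\overline{\alpha_e}|}{|X_e(0)|}\,\lambda_1(X_e)\,|\alpha_e| \geq \tfrac12(q+1-\sqrt q)\,|\alpha_e|,$$
using $|\overline{\alpha_e}|/|X_e(0)| \geq \tfrac12$ and $\lambda_1(X_e) \geq q+1-\sqrt q$. For part (2), if $e$ is thin then $|\alpha_e| \leq |X_e(0)|^{0.9}$, so $|\overline{\alpha_e}|/|X_e(0)| \geq 1 - |X_e(0)|^{-0.1} \geq 1 - (2(q+1))^{-0.1}$, and the same inequality now gives $E_{X_e}(\alpha_e,\overline{\alpha_e}) \geq \bigl(1-(2(q+1))^{-0.1}\bigr)(q+1-\sqrt q)\,|\alpha_e|$. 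It then suffices to verify the elementary bound $\bigl(1-(2(q+1))^{-0.1}\bigr)(q+1-\sqrt q) \geq q+1-q^{0.9}$, i.e. that the subtracted quantity $\sqrt q + (2(q+1))^{-0.1}(q+1-\sqrt q)$, which is at most $\sqrt q + (1+o(1))\,2^{-0.1}q^{0.9}$, is at most $q^{0.9}$; this holds for all $q \gg 0$ since $2^{-0.1} < 1$ and $\sqrt q$ is of strictly lower order.

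The argument is routine and I do not expect a genuine obstacle; the only points requiring minor care are that the worst case for the normalizing factor $|X_e(0)|^{-0.1}$ is the white edge (smallest link, on $2(q+1)$ vertices), so one uses $|X_e(0)| \geq 2(q+1)$ uniformly over all $e$, and that the exponent $0.9$ in the definition of thin edge is exactly what leaves enough slack to absorb both the $\sqrt q$ term and the error term $(2(q+1))^{-0.1}(q+1)$ into $q^{0.9}$ once $q$ is large.
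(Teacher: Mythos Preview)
Your proposal is correct and follows essentially the same route as the paper: both apply Proposition~\ref{prop-cheeger}(1) to the $(q+1)$-regular link graph $X_e$, use $\lambda_1(X_e)\geq q+1-\sqrt{q}$, and for part~(2) bound $|\overline{\alpha_e}|/|X_e(0)|$ via the thinness hypothesis, with the white-edge link $|X_e(0)|=2(q+1)$ giving the binding constraint. The paper presents the white case explicitly and dismisses the black case with ``similarly'', whereas you phrase it as a uniform bound $|X_e(0)|\geq 2(q+1)$; this is the same argument.
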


\begin{proof} The first item is deduced directly from Proposition~\ref{prop-cheeger}. For the second item, assume first that $e$ is white. In this case Proposition~\ref{prop-cheeger} gives:
$$E_{X_e} (\alpha_e, \overline{\alpha_e}) \geq \frac{2(q+1)-(2(q+1))^{0.9}}{2(q+1)}(q+1 - \sqrt{q})|\alpha_e| = (1 - \frac{1}{(2(q+1))^{0.1}})(q+1 - \sqrt{q})|\alpha_e| \geq (q+1 - q^{0.9})|\alpha_e|,$$
as $q$ is large. Similarly this is also true for black edges.
\end{proof}

Combining Lemmas~\ref{lemma-3t-1+4t-2+3t-3} and~\ref{lemma-edges-out-of-thin-edge} we get:

\begin{lemma}\label{lemma-3t-1+4t-2+3t_3-large-if-many-thins} $3t_1 + 4t_2 +3t_3 \geq \frac{3}{2}(q+1)|\alpha| + \frac{(q+1)}{2}r -3q^{0.9}|\alpha|$.
\end{lemma}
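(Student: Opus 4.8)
The plan is to derive this inequality directly from the two lemmas just established, Lemma~\ref{lemma-3t-1+4t-2+3t-3} and Lemma~\ref{lemma-edges-out-of-thin-edge}, together with the identity $r+s=3|\alpha|$ recorded in Definition~\ref{def-thin-thick-edge}; it is essentially a bookkeeping step.

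First I would start from the exact identity of Lemma~\ref{lemma-3t-1+4t-2+3t-3},
$$3t_1+4t_2+3t_3=\sum_{e\in X(1)}E_{X_e}(\alpha_e,\overline{\alpha_e}),$$
and split the sum according to whether $e$ is thin ($e\in R$) or thick ($e\in S$). On thin edges I apply the strong bound $E_{X_e}(\alpha_e,\overline{\alpha_e})\geq(q+1-q^{0.9})|\alpha_e|$ from Lemma~\ref{lemma-edges-out-of-thin-edge}(2), and on thick edges the weak bound $E_{X_e}(\alpha_e,\overline{\alpha_e})\geq\tfrac12(q+1-\sqrt q)|\alpha_e|$ from Lemma~\ref{lemma-edges-out-of-thin-edge}(1). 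Summing and recalling $r=\sum_{e\in R}|\alpha_e|$ and $s=\sum_{e\in S}|\alpha_e|$ gives
$$3t_1+4t_2+3t_3\;\geq\;(q+1-q^{0.9})\,r+\tfrac12(q+1-\sqrt q)\,s.$$

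Next I would rearrange the right-hand side so the ``free'' part is governed by $r+s=3|\alpha|$. Factoring out the smaller coefficient $\tfrac12(q+1-\sqrt q)$ from both summands,
$$(q+1-q^{0.9})r+\tfrac12(q+1-\sqrt q)s=\tfrac12(q+1-\sqrt q)(r+s)+\bigl(\tfrac12(q+1)-q^{0.9}+\tfrac12\sqrt q\bigr)r,$$
and since $r+s=3|\alpha|$ the first term is $\tfrac32(q+1-\sqrt q)|\alpha|$. The coefficient of $r$ that comes out, $\tfrac12(q+1)-q^{0.9}+\tfrac12\sqrt q$, is a bit smaller than the target $\tfrac{q+1}{2}$, so I would write it as $\tfrac{q+1}{2}r+\bigl(\tfrac12\sqrt q-q^{0.9}\bigr)r$ and, using $0\leq r\leq 3|\alpha|$ and $\tfrac12\sqrt q-q^{0.9}\leq0$ (valid for $q\gg0$), replace $\bigl(\tfrac12\sqrt q-q^{0.9}\bigr)r$ by $\bigl(\tfrac12\sqrt q-q^{0.9}\bigr)\cdot 3|\alpha|$. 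Collecting the $|\alpha|$-terms, $\tfrac32(q+1)|\alpha|-\tfrac32\sqrt q|\alpha|+\tfrac32\sqrt q|\alpha|-3q^{0.9}|\alpha|$, the $\sqrt q$ contributions cancel exactly and what remains is $\tfrac32(q+1)|\alpha|+\tfrac{q+1}{2}r-3q^{0.9}|\alpha|$, as claimed.

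There is no genuine obstacle here; the one point that needs a moment's care is that the coefficient of $r$ produced by the naive estimate is \emph{strictly} below the $\tfrac{q+1}{2}$ appearing in the statement, so one cannot simply discard error terms — one must trade the surplus against the $|\alpha|$-term, which works precisely because $q^{0.9}$ dominates $\tfrac12\sqrt q$ (and $3q^{0.9}$ dominates $\tfrac32\sqrt q$) for $q$ large. I would also note that the asymmetric use of Lemma~\ref{lemma-edges-out-of-thin-edge} — the strong bound only on thin edges — is exactly what makes the final estimate sensitive to $r$, which is the mechanism that will later let ``many thin edges'' be turned into a lower bound on $t_1$.
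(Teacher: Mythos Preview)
Your proposal is correct and follows essentially the same approach as the paper: start from the identity of Lemma~\ref{lemma-3t-1+4t-2+3t-3}, split over thin and thick edges, apply the two parts of Lemma~\ref{lemma-edges-out-of-thin-edge}, substitute $r+s=3|\alpha|$, and absorb the residual $(\tfrac12\sqrt q - q^{0.9})r$ term into the $-3q^{0.9}|\alpha|$ using $r\le 3|\alpha|$. The paper presents only the three-line chain of inequalities without spelling out the cancellation you describe, but the computation is identical.
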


\begin{proof}
\begin{eqnarray}
3t_1 + 4t_2 +3t_3 & =    & \sum_{e \in S}E_{X_e}(\alpha_e, \overline{\alpha_e}) + \sum_{e \in R}E_{X_e}(\alpha_e, \overline{\alpha_e}) \\
                  & \geq & \frac{1}{2}(q+1-\sqrt{q})s + (q+1-q^{0.9})r \\
                  & \geq & \frac{3}{2}(q+1)|\alpha| + \frac{(q+1)}{2}r -3q^{0.9}|\alpha|.
\end{eqnarray}
\end{proof}

Subtracting twice Lemma~\ref{lemma-t-1+2t-2+3t-3+4t-4}(\ref{item-two-in-lemma-t-1+2t-2+3t-3+4t-4}) from the equation proved in Lemma~\ref{lemma-3t-1+4t-2+3t_3-large-if-many-thins} we get:

\begin{lemma}\label{lemma-t-1-large-if-many-thins} $t_1 - 3t_3  -8t_4 \geq -\frac{(q+1)}{2}|\alpha| + \frac{(q+1)}{2}r -3q^{0.9}|\alpha|$.
\end{lemma}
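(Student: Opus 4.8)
The plan is to deduce this purely by linear combination of two facts already established: the pyramid‑counting identity of Lemma~\ref{lemma-t-1+2t-2+3t-3+4t-4}(\ref{item-two-in-lemma-t-1+2t-2+3t-3+4t-4}), namely
$$t_1+2t_2+3t_3+4t_4=(q+1)|\alpha|,$$
and the lower bound of Lemma~\ref{lemma-3t-1+4t-2+3t_3-large-if-many-thins},
$$3t_1+4t_2+3t_3\geq \tfrac{3}{2}(q+1)|\alpha|+\tfrac{q+1}{2}r-3q^{0.9}|\alpha|.$$
First I would multiply the first (exact) identity by $2$ to get $2t_1+4t_2+6t_3+8t_4=2(q+1)|\alpha|$, and then subtract it from the displayed inequality. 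On the left‑hand side the $4t_2$ terms cancel and $3t_1-2t_1=t_1$, $3t_3-6t_3=-3t_3$, $0-8t_4=-8t_4$, so the left side becomes exactly $t_1-3t_3-8t_4$. On the right‑hand side we get $\tfrac{3}{2}(q+1)|\alpha|-2(q+1)|\alpha|=-\tfrac{q+1}{2}|\alpha|$, while the $\tfrac{q+1}{2}r$ and $-3q^{0.9}|\alpha|$ terms are untouched, yielding precisely
$$t_1-3t_3-8t_4\geq -\tfrac{q+1}{2}|\alpha|+\tfrac{q+1}{2}r-3q^{0.9}|\alpha|,$$
which is the claim. (Subtracting an equality from a valid inequality preserves the inequality, so the direction is correct.)

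There is no real obstacle here: the step is a one‑line algebraic manipulation, and all the genuine content — the edge‑link expansion estimates feeding into Lemma~\ref{lemma-3t-1+4t-2+3t_3-large-if-many-thins}, and the elementary incidence counts in Lemma~\ref{lemma-t-1+2t-2+3t-3+4t-4} — has already been carried out. The only thing to be slightly careful about is bookkeeping the coefficients of $t_1,t_2,t_3,t_4$ when forming the combination, which I have checked above.
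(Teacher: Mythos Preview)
Your proof is correct and is exactly the paper's argument: the paper says to subtract twice Lemma~\ref{lemma-t-1+2t-2+3t-3+4t-4}(\ref{item-two-in-lemma-t-1+2t-2+3t-3+4t-4}) from the inequality of Lemma~\ref{lemma-3t-1+4t-2+3t_3-large-if-many-thins}, which is precisely the linear combination you carry out, with the same cancellation of the $4t_2$ terms and the same resulting coefficients.
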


Thus, in order to prove Theorem~\ref{thm-isoperimetric -inequalities}, it will suffice to prove that $r > (1+\epsilon')|\alpha|$  for some $\epsilon'$ (independent of $q$) when $q$ is large. I.e., more than $\frac{1}{3}$ of the contribution to $\alpha$ comes from thin edges. It is interesting to compare this with the proof in Section~\ref{section-two-dim-proof} for $\mbox{dim }X=2$, where we had only to show that $r \geq \epsilon|\alpha|$. This is what makes the current proof more difficult.

Let us now use a global argument.

\begin{definition}(Thin/thick vertex)~\label{def-thin-thick-vertex}
A vertex $v \in X(0)$ is called a {\em thin vertex} if in its link, $X_v$, there are less than $q^{2.75}$ thick black vertices and less than $q^{3.7}$ thick white vertices. Otherwise it is called a {\em thick vertex}. Let $S_0$ denote the set of thick vertices, and $R_0$ - the thin ones.
\end{definition}

For every $v \in X(0)$, our cochain $\alpha$ defines a $1$-cochain $\alpha_v \in C^1(X_v,\F_2)$.

\begin{lemma}~\label{lemma-thick-vertex}
\begin{enumerate}
\item~\label{item-one-lemma-thick-vertex}  If $v$ is a thick vertex then $|\alpha_v| \geq \frac{q^{4.55}}{2}.$
\item~\label{item-two-lemma-thick-vertex} The number of thick vertices is at most $\frac{6|\alpha|}{q^{4.55}}$, i.e., $|S_0| \leq \frac{6|\alpha|}{q^{4.55}}$.
\item~\label{item-three-lemma-thick-vertex} $|S_0| \leq \frac{6n}{q^{1.55}}$, where $n := |X(0)|$.
\end{enumerate}
\end{lemma}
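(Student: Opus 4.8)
The plan is to derive the three items in order, each reducing to a short counting identity.

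For item (1), I would first record the handshake identity inside the link $X_v$. Viewing $\alpha_v$ as a set of edges of the graph $X_v = S(4,q)$, and recalling that a vertex $w$ of $X_v$ corresponds to an edge $e\ni v$ of $X$ with $|(\alpha_v)_w| = |\alpha_e|$ — where $|X_e(0)| = 2(q^2+q+1)$ if $w$ is black and $|X_e(0)| = 2(q+1)$ if $w$ is white — one has
$$2|\alpha_v| \;=\; \sum_{w\in X_v(0)}|(\alpha_v)_w|.$$
If $v\in S_0$, then by Definition~\ref{def-thin-thick-vertex} either $X_v$ has at least $q^{2.75}$ thick black vertices or at least $q^{3.7}$ thick white vertices. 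I would restrict the sum above to those vertices and insert the thresholds from Definition~\ref{def-thin-thick-edge}: for thick black $w$, $|(\alpha_v)_w| > (2(q^2+q+1))^{0.9} > q^{1.8}$, and for thick white $w$, $|(\alpha_v)_w| > (2(q+1))^{0.9} > q^{0.9}$. This gives $2|\alpha_v| > q^{2.75}\cdot q^{1.8} = q^{4.55}$ in the first case and $2|\alpha_v| > q^{3.7}\cdot q^{0.9} = q^{4.6} > q^{4.55}$ in the second, so $|\alpha_v|\ge q^{4.55}/2$ either way.

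For item (2), I would use the global identity $\sum_{v\in X(0)}|\alpha_v| = 3|\alpha|$ — valid because each $2$-cell $\sigma\in\alpha$ contributes exactly the edge $\sigma\setminus\{v\}$ to $\alpha_v$ for each of its three vertices $v$ — restrict the sum to $S_0$, and apply item (1) to each summand:
$$\tfrac12 q^{4.55}\,|S_0| \;\le\; \sum_{v\in S_0}|\alpha_v| \;\le\; 3|\alpha|,$$
hence $|S_0|\le 6|\alpha|/q^{4.55}$. For item (3), I would feed in the standing hypothesis of the case $i=2$ of Theorem~\ref{thm-isoperimetric -inequalities}, namely $|\alpha| < q^3|X(0)| = q^3 n$, to conclude $|S_0| < 6q^3 n/q^{4.55} = 6n/q^{1.55}$.

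There is no genuine obstacle in this lemma; it is purely bookkeeping. The only things to check are the exponent inequalities $(2(q^2+q+1))^{0.9}\ge q^{1.8}$, $(2(q+1))^{0.9}\ge q^{0.9}$ and $q^{4.6}\ge q^{4.55}$, all of which hold for every $q\ge 1$. The point worth emphasizing is that the numerical exponents $2.75$, $3.7$ and $0.9$ in Definitions~\ref{def-thin-thick-edge} and~\ref{def-thin-thick-vertex} are calibrated so that the exponent $4.55$ produced in item (1) strictly exceeds the exponent $3$ coming from the size hypothesis; this slack is exactly what produces the power saving $q^{-1.55}$ in item (3), the form in which the smallness of $|S_0|$ can be absorbed by the mixing estimate of Lemma~\ref{lemma-neighborhood-of-thin-vertex-defs} later in the section.
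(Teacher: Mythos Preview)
Your proposal is correct and follows essentially the same approach as the paper's proof: both argue item (1) by counting, over the thick edges at $v$, the triangles of $\alpha$ through each such edge (your handshake identity $2|\alpha_v|=\sum_w|(\alpha_v)_w|$ is exactly the paper's remark that ``the same triangle can be counted twice according to its two edges which touch $v$''), and items (2) and (3) by the triple-counting identity $\sum_v|\alpha_v|=3|\alpha|$ together with the hypothesis $|\alpha|<q^3n$. Your write-up is in fact slightly cleaner in making the handshake step explicit.
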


\begin{proof}

(\ref{item-one-lemma-thick-vertex}) If $v$ is a thick vertex than it lies on either at least $q^{2.75}$ black thick edges or on at least $q^{3.7}$ white thick edges. A black (resp. white) thick edge is contained in at least $(2(q^2+q+1))^{0.9}$ (resp. $(2(q+1))^{0.9}$) triangles.
The same triangle can be counted twice according to its two edges which touch $v$, but in any case it means that there are at least $\frac{q^{4.55}}{2}$ edges in $\alpha_v$.

(\ref{item-two-lemma-thick-vertex}) By (\ref{item-one-lemma-thick-vertex}), every thick vertex touches at least $\frac{q^{4.55}}{2}$ triangles from $\alpha$.
A triangle touches $3$ vertices, so it can be counted at most $3$ times. Hence $|S_0| \leq \frac{6|\alpha|}{q^{4.55}}$.

(\ref{item-three-lemma-thick-vertex}) follows from the fact that $|\alpha| \leq q^3n$.
\end{proof}


As $X^{(1)}$ is "almost" a Ramanujan graph we can prove:

\begin{lemma}~\label{lemma-number-of-edges-within-S-0}
For $q \gg 0$, $|E(S_0, S_0)| \leq \frac{19}{q^{2.1}}|\alpha|$.
\end{lemma}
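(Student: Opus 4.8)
The statement is a routine expander--mixing estimate for the near-Ramanujan graph $X^{(1)}$, combined with the two size bounds on $S_0$ already recorded in Lemma~\ref{lemma-thick-vertex}. Recall from Section~\ref{subsection-Bruhat-Tits buildings} that $X^{(1)}$ is a $k$-regular graph with $k = {4 \choose 1}_q + {4 \choose 2}_q + {4 \choose 3}_q = q^4 + 3q^3 + 4q^2 + 3q + 3 \sim q^4$, and that by Corollary~\ref{cor-bounding-lambda-in-quotient-Ram-complex} its second largest adjacency eigenvalue is at most $4^4\sqrt{k}$, so $\lambda_1(X^{(1)}) \geq k - 4^4\sqrt{k}$. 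Write $n := |X(0)|$.

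First I would apply Proposition~\ref{prop-cheeger}(3) to $X^{(1)}$ with $W = S_0$ (and $\overline{S_0}$ its complement in $X(0)$):
$$|E(S_0,S_0)| \leq \frac{1}{2}\Big(k - \frac{|\overline{S_0}|}{n}\lambda_1(X^{(1)})\Big)|S_0| \leq \frac{1}{2}\Big(k\frac{|S_0|}{n} + 4^4\sqrt{k}\Big)|S_0|,$$
where in the last step I used $\lambda_1(X^{(1)}) \geq k - 4^4\sqrt{k}$ together with $|\overline{S_0}|/n \leq 1$ to get $k - \frac{|\overline{S_0}|}{n}\lambda_1 \leq k\frac{|S_0|}{n} + 4^4\sqrt{k}$.

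Next I would feed in the bound $|S_0| \leq \frac{6n}{q^{1.55}}$ from Lemma~\ref{lemma-thick-vertex}(\ref{item-three-lemma-thick-vertex}) to control the parenthesis: then $k\frac{|S_0|}{n} \leq \frac{6k}{q^{1.55}} \sim 6q^{2.45}$, which dominates $4^4\sqrt{k} \sim 256 q^2$. Since $k/q^4 \to 1$ as $q \to \infty$, for $q \gg 0$ the quantity $k\frac{|S_0|}{n} + 4^4\sqrt{k}$ is at most, say, $6.2\,q^{2.45}$. Finally, I would plug in the complementary bound $|S_0| \leq \frac{6|\alpha|}{q^{4.55}}$ from Lemma~\ref{lemma-thick-vertex}(\ref{item-two-lemma-thick-vertex}), obtaining
$$|E(S_0,S_0)| \leq \frac{1}{2}\cdot 6.2\,q^{2.45}\cdot\frac{6|\alpha|}{q^{4.55}} = \frac{18.6\,|\alpha|}{q^{2.1}} \leq \frac{19|\alpha|}{q^{2.1}}$$
for $q$ sufficiently large.

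There is no real obstacle here; the only point requiring a little attention is that \emph{both} estimates on $|S_0|$ from Lemma~\ref{lemma-thick-vertex} are genuinely needed: the bound in terms of $n$ is used to keep the "density" factor $|S_0|/n$ small (and this is where the hypothesis $|\alpha| < q^3 n$ enters, through Lemma~\ref{lemma-thick-vertex}(\ref{item-three-lemma-thick-vertex})), whereas the bound in terms of $|\alpha|$ converts the estimate into the stated form. The numerical constant $19$ simply absorbs the lower-order terms $3q^3 + 4q^2 + 3q + 3$ in $k$ and the term $4^4\sqrt{k}$, all negligible against $q^{2.45}$ once $q$ is large.
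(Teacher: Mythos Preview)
Your proposal is correct and follows essentially the same route as the paper: apply Proposition~\ref{prop-cheeger}(3) to the near-Ramanujan graph $X^{(1)}$ to get $|E(S_0,S_0)| \leq \frac{|S_0|}{2}\big(k\frac{|S_0|}{n} + 4^4\sqrt{k}\big)$, then insert the two bounds on $|S_0|$ from Lemma~\ref{lemma-thick-vertex}---part~(\ref{item-three-lemma-thick-vertex}) inside the parenthesis and part~(\ref{item-two-lemma-thick-vertex}) for the outer factor---arriving at $\frac{3|\alpha|}{q^{4.55}}(6q^{2.45}+4^4q^2) \leq \frac{19|\alpha|}{q^{2.1}}$. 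Your observation that both estimates on $|S_0|$ are genuinely needed is exactly the point.
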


\begin{proof}
The graph $X^{(1)}$ is a $k$-regular graph with $k=\sum_{i=1}^{3}{4 \choose i}_q \approx q^4$ and $\lambda_1(X^{(1)}) \geq k-4^4\sqrt{k}$
(see Corollary~\ref{cor-bounding-lambda-in-quotient-Ram-complex}). Thus, by Proposition~\ref{prop-cheeger}, $|E(S_0,\overline{S_0})| \geq \frac{\overline{S_0}}{n}(k-4^4\sqrt{k})|S_0|$. Hence,
\begin{eqnarray}
|E(S_0,S_0)| & =    & \frac{1}{2}(k|S_0| - |E(S_0,\overline{S_0})|) \\
           & \leq & \frac{1}{2}(k|S_0| - \frac{|\overline{S_0}|}{n}(k-4^4\sqrt{k})|S_0| )\\
           & =    & \frac{|S_0|}{2}(k(1-\frac{|\overline{S_0}|}{n}) + \frac{|\overline{S_0}|}{n} 4^4\sqrt{k}) \\
           & =    & \frac{|S_0|}{2}(k\frac{|S_0|}{n} + \frac{|\overline{S_0}|}{n} 4^4\sqrt{k}) \\
           & \leq & \frac{|S_0|}{2}(k\frac{|S_0|}{n} +  4^4\sqrt{k})
\end{eqnarray}
Now by Lemma~\ref{lemma-thick-vertex},
$|E(S_0,S_0)| \leq \frac{3|\alpha|}{q^{4.55}}(q^4 \frac{6}{q^{1.55}} + 4^4q^2) \leq \frac{19|\alpha|}{q^{2.1}}$.
\end{proof}

Thus, for $q$ large enough, only small proportion of the triangles in $\alpha$ have two (or more) thick vertices. Indeed,  the total number of edges between thick vertices is bounded by $\frac{20}{q^{2.1}}|\alpha|$ and on every edge there are at most $q^2+q+1$ triangles from $\alpha$. So we have the following corollary, from which we conclude that almost every triangle of $\alpha$ has at most one thick vertex.

\begin{cor}\label{cor-almost-every-alpha-triangle-has-at-most-one-thick-vertex}
There are at most $\frac{20}{q^{0.1}}|\alpha|$ triangles with $2$ or $3$ thick vertices.
\end{cor}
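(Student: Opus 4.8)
The plan is to charge every triangle of $\alpha$ having two or more thick vertices to an edge of $X^{(1)}$ whose two endpoints both lie in $S_0$, and then to control the number of such edges via Lemma~\ref{lemma-number-of-edges-within-S-0}.

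First I would record the elementary observation that if a triangle $T=\{v_0,v_1,v_2\}\in\alpha$ has at least two thick vertices, then at least one of its three edges has both endpoints in $S_0$ (any two thick vertices of $T$ span such an edge). Consequently, regarding $E(S_0,S_0)$ as the set of edges internal to $S_0$,
\[
\#\{\,T\in\alpha\cap X(2)\ :\ T\text{ has }\geq 2\text{ thick vertices}\,\}\ \leq\ \sum_{e\in E(S_0,S_0)}|\alpha_e|,
\]
where $\alpha_e$ denotes the set of triangles of $\alpha$ containing $e$; a triangle all of whose vertices are thick is counted three times on the right, which is harmless.

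Next I would invoke local minimality of $\alpha$: as noted before Lemma~\ref{lemma-t-1+2t-2+3t-3+4t-4}, for every edge $e$ one has $|\alpha_e|\leq\frac12|X_e(0)|\leq q^2+q+1$, the bound being attained for black edges where $|X_e(0)|=2(q^2+q+1)$. Hence the right-hand side above is at most $(q^2+q+1)\,|E(S_0,S_0)|$, and plugging in $|E(S_0,S_0)|\leq\frac{19}{q^{2.1}}|\alpha|$ from Lemma~\ref{lemma-number-of-edges-within-S-0} gives the bound $\frac{19(q^2+q+1)}{q^{2.1}}|\alpha|=\frac{19}{q^{0.1}}\bigl(1+\tfrac1q+\tfrac1{q^2}\bigr)|\alpha|$, which is $\leq\frac{20}{q^{0.1}}|\alpha|$ for $q\gg0$, as claimed.

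I do not expect any genuine obstacle: the statement is essentially a one-line consequence of the edge estimate already proved, the only points requiring (routine) care being the over-counting of fully thick triangles, which is in our favour, and the inequality $q^2+q+1\leq\tfrac{20}{19}\,q^{2.1}$, valid for all large $q$ since $q^{2.1}=q^2\cdot q^{0.1}$ with $q^{0.1}\to\infty$.
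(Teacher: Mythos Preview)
Your proposal is correct and is essentially the same argument as the paper's: a triangle of $\alpha$ with two or more thick vertices contains an edge in $E(S_0,S_0)$, each such edge lies in at most $q^2+q+1$ triangles of $\alpha$ by local minimality, and Lemma~\ref{lemma-number-of-edges-within-S-0} bounds $|E(S_0,S_0)|$. The paper's version is terser and slightly looser with the constants (it passes from $19$ to $20$ before multiplying by $q^2+q+1$), but the reasoning is identical.
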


Now we show that almost all the triangles in $\alpha$ have at least one thin edge.

\begin{lemma}~\label{lemma-neighborhood-of-thin-vertex}
\begin{enumerate}
\item~\label{item-one-lemma-neighborhood-of-thin-vertex}  The number of triangles of $\alpha$ with at least one thin edge is at least $(1-o_q(1))|\alpha|$.

\item~\label{item-two-lemma-neighborhood-of-thin-vertex} The fraction of triangles of $\alpha$ with 3 thin vertices and at most one thin edge is $o_q(1)$.
\end{enumerate}
\end{lemma}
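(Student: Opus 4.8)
The plan is to reduce both parts of the lemma to a single estimate obtained by running Lemma~\ref{lemma-neighborhood-of-thin-vertex-defs} inside the link $X_u$ of each thin vertex $u \in R_0$ (recall $X_u \cong S(4,q)$, so its $1$-skeleton is the graph $Z$ of Section~\ref{subsection-spherical-buildings}). For a thin vertex $u$, call a triangle $\triangle \ni u$ of $\alpha$ \emph{bad at $u$} if both edges of $\triangle$ incident to $u$ are thick, and write $B_u$ for the set of triangles bad at $u$. The one substantive claim I would establish is
$$\sum_{u \in R_0} |B_u| \;\leq\; 6\,\epsilon(q)\,|\alpha| \;=\; o_q(1)\,|\alpha|,$$
where $\epsilon(q)\to 0$ is the function furnished by Lemma~\ref{lemma-neighborhood-of-thin-vertex-defs}; both parts then follow by a short combinatorial step.

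To prove the claim I would fix $u \in R_0$ and view $\alpha_u \in C^1(X_u,\F_2)$ as a subgraph of $X_u$: its edges are precisely the triangles of $\alpha$ through $u$, and for a vertex $t$ of $X_u$, i.e.\ an edge $e=\{u,x\}$ of $X$, one has $\deg_{\alpha_u}(t) = |\alpha_e|$, with $t$ black/white and thick/thin exactly when $e$ is. Let $T = T_1 \cup T_2 \cup T_3$ be the set of thick vertices of $X_u$, $T_i = T \cap M_i$ (so $T_1,T_3$ are black, $T_2$ white). Since $u$ is a thin vertex (Definition~\ref{def-thin-thick-vertex}), $|T_1|,|T_3| \leq q^{2.75}$ and $|T_2| \leq q^{3.7}$; and since each $t \in T$ is a thick vertex (Definition~\ref{def-thin-thick-edge}), $\deg_{\alpha_u}(t) = |\alpha_e| > |X_e(0)|^{0.9}$, which exceeds $q^{1.8}$ for black $t$ (as $|X_e(0)| = 2(q^2+q+1)$) and $q^{0.9}$ for white $t$ (as $|X_e(0)| = 2(q+1)$). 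So Lemma~\ref{lemma-neighborhood-of-thin-vertex-defs} applies, with $E(t)$ the set of edges of $\alpha_u$ incident to $t$, and gives $|E_{X_u}(T,T)| \leq \epsilon(q)\,|\tilde E|$ with $|\tilde E| \leq \sum_{t\in T}\deg_{\alpha_u}(t) \leq 2|\alpha_u|$. As every triangle bad at $u$ is an edge of $\alpha_u$, hence of $X_u$, joining two thick vertices, $|B_u| \leq |E_{X_u}(T,T)| \leq 2\epsilon(q)|\alpha_u|$ (and $B_u = \emptyset$ if $T = \emptyset$); summing over $u \in R_0$ and using $\sum_{v \in X(0)}|\alpha_v| = 3|\alpha|$ (each triangle of $\alpha$ has three vertices) yields the displayed bound.

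For part~\ref{item-one-lemma-neighborhood-of-thin-vertex}: a triangle of $\alpha$ all of whose edges are thick either has at least two thick vertices — by Corollary~\ref{cor-almost-every-alpha-triangle-has-at-most-one-thick-vertex} there are at most $\frac{20}{q^{0.1}}|\alpha|$ such — or it has a thin vertex $u$, at which it is necessarily bad, hence lies in $\bigcup_{u\in R_0} B_u$. So the number of triangles of $\alpha$ with all three edges thick is at most $\big(6\epsilon(q)+20q^{-0.1}\big)|\alpha| = o_q(1)|\alpha|$, which is part~\ref{item-one-lemma-neighborhood-of-thin-vertex}. For part~\ref{item-two-lemma-neighborhood-of-thin-vertex}: a triangle of $\alpha$ with three thin vertices and at most one thin edge has at least two thick edges, and any two edges of a triangle meet at a vertex $u$, which here is thin; so the triangle is bad at $u$ and again lies in $\bigcup_{u\in R_0}B_u$, whence there are at most $6\epsilon(q)|\alpha| = o_q(1)|\alpha|$ of them.

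I do not anticipate a real obstacle: the analytic heart of the matter is Lemma~\ref{lemma-neighborhood-of-thin-vertex-defs}, already in hand, and what remains is bookkeeping — in particular checking that the thresholds in Definitions~\ref{def-thin-thick-edge} and~\ref{def-thin-thick-vertex} were selected exactly so that the hypotheses $|T_1|,|T_3|\leq q^{2.75}$, $|T_2|\leq q^{3.7}$, $|E(t)|>q^{1.8}$ (black) and $|E(t)|>q^{0.9}$ (white) of Lemma~\ref{lemma-neighborhood-of-thin-vertex-defs} hold. The one point needing a little care is that two over-counts are harmless: Lemma~\ref{lemma-neighborhood-of-thin-vertex-defs} bounds \emph{all} edges of $X_u$ between thick vertices, not merely those of $\alpha_u$, and a single triangle can be bad at two or three of its vertices — but only upper bounds are needed, so neither matters.
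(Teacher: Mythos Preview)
Your proposal is correct and follows essentially the same approach as the paper: the paper formulates the intermediate Lemma~\ref{lemma-neighborhood-of-thin-vertex-basic} (that $|E(S_v,S_v)|/|\alpha_v| = o_q(1)$ for thin $v$), proves it by invoking Lemma~\ref{lemma-neighborhood-of-thin-vertex-defs} exactly as you do, and then deduces both parts by the same combinatorial steps (Corollary~\ref{cor-almost-every-alpha-triangle-has-at-most-one-thick-vertex} for part~\ref{item-one-lemma-neighborhood-of-thin-vertex}, and summing $|E(S_v,S_v)|$ over thin $v$ for part~\ref{item-two-lemma-neighborhood-of-thin-vertex}). Your ``bad at $u$'' sets $B_u$ are precisely the edges of $\alpha_u$ inside $E(S_u,S_u)$, so the arguments coincide; if anything, your bookkeeping (the explicit bound $|\tilde E|\le 2|\alpha_u|$ and the acknowledgment of the harmless over-counts) is slightly more careful than the paper's.
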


Both parts of the lemma follow from the following result:

\begin{lemma}~\label{lemma-neighborhood-of-thin-vertex-basic}
Let $v$ be a thin vertex of $X$, $X_v$ its link, and $\alpha_v$ the $1$-cochain of $X_v$ induced by $\alpha$.
Let $S_v$ be the subset of $X_v(0)$ of the thick vertices of $X_v$, i.e., the ones corresponding to the thick edges of $X$ coming out of $v$.
Then:
$$\frac{|E(S_v,S_v)|}{|\alpha_v|} = o_q(1). $$
Namely, the probability of an edge of $\alpha_v$ on $X_v^{(1)}$ coming from a vertex in $S_v$ to stay at $S_v$ is going to zero as $q \rightarrow \infty$.
\end{lemma}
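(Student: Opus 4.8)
The plan is to apply the technical Lemma~\ref{lemma-neighborhood-of-thin-vertex-defs} to the link graph $Z = X_v = S(4,q)^{(1)}$ (or rather to its bipartite sub-pieces $Z_{1,2}, Z_{2,3}, Z_{1,3}$), with the sets $T_i$ taken to be the thick vertices of $X_v$ of each of the three types. First I would unpack the definition of a thin vertex $v$: by Definition~\ref{def-thin-thick-vertex}, the set $T_1 \cup T_3$ of thick black vertices of $X_v$ has size at most $q^{2.75}$ and the set $T_2$ of thick white vertices of $X_v$ has size at most $q^{3.7}$. This is exactly the first bulleted hypothesis of Lemma~\ref{lemma-neighborhood-of-thin-vertex-defs} (with $|T_1|, |T_3| \le q^{2.75}$ following from $|T_1| + |T_3| \le q^{2.75}$, or after a harmless adjustment of the exponent if one wants each piece separately).

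Second, I would supply the edge sets $E(t)$ required by Lemma~\ref{lemma-neighborhood-of-thin-vertex-defs}: for a thick vertex $t$ of $X_v$, which corresponds to a thick edge $e$ of $X$ through $v$, let $E(t)$ be the set of edges of $\alpha_v$ incident to $t$ in $X_v^{(1)}$ — equivalently, the triangles of $\alpha$ containing the edge $e$ (viewed as edges of the link $X_v$ at $t$). Since $e$ is thick, Definition~\ref{def-thin-thick-edge} gives $|\alpha_e| > |X_e(0)|^{0.9}$; for a black edge $|X_e(0)| = 2(q^2+q+1)$ so $|E(t)| > (2q^2)^{0.9} > q^{1.8}$, and for a white edge $|X_e(0)| = 2(q+1)$ so $|E(t)| > q^{0.9}$, which is precisely the second bulleted hypothesis. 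Then $\tilde E = \bigcup_{t} E(t)$ is (up to the factor $2$ coming from the two endpoints of each edge of $\alpha_v$ in $X_v$) the set of edges of $\alpha_v$ incident to $S_v$, so $|\tilde E| \le 2|\alpha_v|$, and $E(T,T) = E(S_v, S_v)$ in the notation of Lemma~\ref{lemma-neighborhood-of-thin-vertex-defs}. Applying that lemma yields
\[
\frac{|E(S_v,S_v)|}{|\alpha_v|} \le \frac{2|E(T,T)|}{|\tilde E|} = o_q(1),
\]
as claimed. One should also note that edges of $\alpha_v$ with both endpoints in $S_v$ are double-counted in $\sum_t |E(t)|$, which only helps the estimate.

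The main point to be careful about is bookkeeping between "thin/thick" at the level of $X$ and at the level of the link $X_v$: a thick edge $e \ni v$ of $X$ must be translated into a thick vertex of $X_v$, and the lower bounds on $|\alpha_e|$ must be matched against the exponents $1.8$ and $0.9$ demanded by Lemma~\ref{lemma-neighborhood-of-thin-vertex-defs} — which works only because a black thick edge carries the stronger bound $|\alpha_e| > q^{1.8}$ rather than merely $q^{0.9}$. I do not expect a genuine obstacle here; the real content was already absorbed into the eigenvalue computation of Lemma~\ref{lemma-one-spherical-eigenvalues-calc} and the mixing estimate of Lemma~\ref{lemma-neighborhood-of-thin-vertex-defs}. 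Finally, Lemma~\ref{lemma-neighborhood-of-thin-vertex} follows: part (\ref{item-one-lemma-neighborhood-of-thin-vertex}) by summing the above over all thin vertices $v$ (almost all vertices are thin, by Lemma~\ref{lemma-thick-vertex}(\ref{item-three-lemma-thick-vertex}) together with Corollary~\ref{cor-almost-every-alpha-triangle-has-at-most-one-thick-vertex}, so almost every triangle of $\alpha$ has a thin vertex, and at a thin vertex all but an $o_q(1)$-fraction of the incident $\alpha$-edges in the link land outside $S_v$, i.e. have a thin edge of $X$); and part (\ref{item-two-lemma-neighborhood-of-thin-vertex}) is the contrapositive reading of the same inequality applied at each of the three vertices of such a triangle.
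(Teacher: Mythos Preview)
Your proposal is correct and follows essentially the same approach as the paper: identify $S_v$ with $T = T_1 \cup T_2 \cup T_3$ in the link $X_v \cong S(4,q)$, verify the size hypotheses of Lemma~\ref{lemma-neighborhood-of-thin-vertex-defs} from Definition~\ref{def-thin-thick-vertex} and the edge-multiplicity hypotheses from Definition~\ref{def-thin-thick-edge} (using $|X_e(0)|^{0.9} > q^{1.8}$ for black edges and $> q^{0.9}$ for white ones), and then invoke that lemma together with $|\tilde E| \le |\alpha_v|$. The additional derivation of Lemma~\ref{lemma-neighborhood-of-thin-vertex} you append is also exactly how the paper proceeds.
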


\begin{proof} The link $X_v$ is isomorphic to $S(4,q)$, so $S_v$ can be considered as a subset of $M=M_1 \cup M_2 \cup M_3$ of the subspaces of $\F_q^4$, where $M_i$ is the set of subspaces of dimension $i$. Let $T_i = S_v \cap M_i$. As $v$ is a thin vertex of $X$, Definition~\ref{def-thin-thick-vertex} implies that $|T_2| < q^{3.7}$ and $|T_1 \cup T_3| < q^{2.75}$. On the other hand, every $w \in S_v$ corresponds to a thick edge $e$ of $X$. Hence, by Definition~\ref{def-thin-thick-edge}, $e$ lies on at least $|X_e(0)|^{0.9}$ triangles from $\alpha$, or in other words $\alpha_v$ has at least $(\mbox{deg }(w))^{0.9}$ edges coming out of $w$. Now, if $w \in T_1 \cup T_3$, $\mbox{deg}(w) = 2(q+1)$, while if $w \in T_2$, $\mbox{deg}(w) = 2(q^2+q+1)$, thus all the assumptions of Lemma~\ref{lemma-neighborhood-of-thin-vertex-defs} are satisfied and our lemma follows.
\end{proof}
%

Let us spell out the meaning of the last lemma. Lemma~\ref{lemma-neighborhood-of-thin-vertex-basic} says that if $e$ is a thick edge of $X$ containing a thin vertex $v$, and $\triangle$ is a triangle in $\alpha$ containing $e$, then the second edge of $\triangle$ touching $v$, is most likely thin (with probability $1-o_q(1)$).

We next show that Lemma~\ref{lemma-neighborhood-of-thin-vertex} follows from Lemma~\ref{lemma-neighborhood-of-thin-vertex-basic}.

\begin{proof} (of Lemma~\ref{lemma-neighborhood-of-thin-vertex})
As we saw above in Corollary~\ref{cor-almost-every-alpha-triangle-has-at-most-one-thick-vertex}, almost all the triangles of $\alpha$ have at least one thin vertex and hence, by Lemma~\ref{lemma-neighborhood-of-thin-vertex-basic} and the remark afterwards, at least one of the edges of the triangle adjacent to it is thin (with probability $1-o_q(1)$). This proves Lemma~\ref{lemma-neighborhood-of-thin-vertex}(\ref{item-one-lemma-neighborhood-of-thin-vertex}).

Similarly, consider the triangles of $\alpha$ with three thin vertices and at most one thin edge, namely, those with three thin vertices and at least two thick edges. Each such a triangle $\triangle$ contributes 1 to $|E(S_v,S_v)|$ for a vertex $v$ that is between its two thick edges (using the
notations of Lemma~\ref{lemma-neighborhood-of-thin-vertex}). Thus the total number $T$ of such triangles is bounded by $\sum_{v \in R_0}|E(S_v,S_v)|$, where $R_0$ is the set of thin vertices.
By Lemma~\ref{lemma-neighborhood-of-thin-vertex}, we have that $\frac{|E(S_v,S_v)|}{|\alpha_v|} = o_q(1). $ Thus,
$T \leq \sum_{v \in R_0}|E(S_v,S_v)| \leq \sum_{v \in R_0} o_q(1)|\alpha_v| \leq  o_q(1)\cdot 3|\alpha|$. I.e., the fraction of triangles of $\alpha$ with 3 thin vertices and at least two thick edges is $o_q(1)$. This proves Lemma~\ref{lemma-neighborhood-of-thin-vertex}(\ref{item-two-lemma-neighborhood-of-thin-vertex}).

\end{proof}

Let now state a general observation about any cochain $\beta \in C^2(X,\F_2)$. Such $\beta$ induces two cochains on the link $X_v$ of every vertex $v$: One is $\beta_v^1 = \beta_v \in C^1(X,\F_2)$ that we have used so far, and the other is $\beta_v^2 \in C^2(X_v,\F_2)$ which is defined just by restricting $\beta$ to the triangles of $X_v$, when we recall that the link of $v$ is the set of simplicies $\tau $ of $X$ s.t. $v \notin \tau$ and $\tau \cup \{v\}$ is also a simplex of $X$. The next proposition follows from the definitions.

\begin{proposition}\label{prop-b-v-1-and-b-v-2} If $\beta \in C^2(X,\F_2)$ then
$$|\delta_2 \beta|= \frac{1}{4}\sum_{v \in X(0)}|\delta_1 \beta_v^1 + \beta_v^2 | \geq
\frac{1}{4}(\sum_{v \in X(0)}|\delta_1 \beta_v^1 | -   \sum_{v \in X(0)}|\beta_v^2 | )
= \frac{1}{4}(\sum_{v \in X(0)}|\delta_1 \beta_v^1 |) -   \frac{(q+1)}{4}|\beta |.$$
\end{proposition}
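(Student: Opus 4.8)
The plan is to unwind the definitions and count incidences between $3$-cells of $X$ and pairs $(v, P)$ where $v$ is a vertex of $X$ and $P$ is a $3$-cell containing $v$. Recall that $\delta_2\beta \in C^3(X,\F_2)$ is the set of pyramids $P$ of $X$ with $|\beta\cap P|$ odd, i.e., containing an odd number of triangles from $\beta$. For a fixed vertex $v$, a pyramid $P \ni v$ corresponds to a triangle $\sigma = P \setminus \{v\}$ of the link $X_v$, and the three triangles of $P$ other than the "base" triangle not through $v$ are exactly the faces of $P$ containing $v$; these correspond under the link identification to the three edges of $\sigma$. Thus the parity of $|\beta \cap P|$ equals the parity of (number of edges of $\sigma$ lying in $\beta_v^1$) plus (the indicator of whether the base triangle $\sigma$ itself lies in $\beta_v^2$). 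In the language of cochains on $X_v$ this is precisely $(\delta_1\beta_v^1 + \beta_v^2)(\sigma)$. Hence for each vertex $v$, the number of pyramids through $v$ that lie in $\delta_2\beta$ equals $|\delta_1\beta_v^1 + \beta_v^2|$.

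Summing over all vertices $v$ of $X$, the left-hand side counts each pyramid $P \in \delta_2\beta$ once for each of its four vertices, giving $\sum_{v \in X(0)}|\delta_1\beta_v^1 + \beta_v^2| = 4|\delta_2\beta|$, which is the first displayed equality. For the inequality, apply the triangle inequality for the Hamming (counting) norm vertex by vertex: $|\delta_1\beta_v^1 + \beta_v^2| \geq |\delta_1\beta_v^1| - |\beta_v^2|$, and sum. Finally, $\sum_{v \in X(0)}|\beta_v^2|$ counts each triangle $\sigma$ of $X$ once for each vertex $v \notin \sigma$ with $\sigma \cup \{v\}$ a pyramid of $X$, i.e., once for each pyramid containing $\sigma$; since every triangle of $X$ lies in exactly $q+1$ pyramids, this sum equals $(q+1)|\beta|$. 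Combining gives the stated bound.

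The argument is essentially bookkeeping, so I do not anticipate a genuine obstacle; the one point requiring a little care is the sign/parity accounting that identifies the parity of $|\beta\cap P|$ with $(\delta_1\beta_v^1 + \beta_v^2)(\sigma)$ — one must check that among the $\binom{4}{3}=4$ triangular faces of the pyramid $P = \{v\} \cup \sigma$, exactly one (namely $\sigma$) does not contain $v$, and the other three are the "cones" over the three edges of $\sigma$, so that working over $\F_2$ the contribution splits exactly as claimed. Since we are over $\F_2$ there are no orientation issues, so this is immediate once the combinatorial picture is set up correctly.
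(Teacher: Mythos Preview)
Your proof is correct and follows essentially the same approach as the paper's own proof: identify, for each vertex $v$, the pyramids through $v$ lying in $\delta_2\beta$ with the triangles of $X_v$ on which $\delta_1\beta_v^1+\beta_v^2$ is nonzero, then sum over vertices (each pyramid counted four times), apply the triangle inequality, and use that every triangle sits in $q+1$ pyramids. Your write-up is in fact more detailed than the paper's, which sketches the same three steps in a couple of sentences.
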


\begin{proof}
Recall that $\delta_2 \beta$ is the set of $3$-cells of $X$ which contains an odd number of triangles from $\beta$. The first equality is just summing up over the vertices $v$ of $X$, the number of such $3$-cells of $\delta_2 \beta$ touching $v$ is indeed the same as the number of triangles in $\delta_1 \beta_v^1 + \beta_v^2$. Note that the last sum is modulo $2$. The inequality follows for the subadditivity of the norm $| \cdot |$. The last equality follows from the fact that every triangle of $X$ is inside $q+1$ pyramids and hence $\sum_{v \in X(0)}|\beta_v^2| = (q+1) |\beta|$.
\end{proof}

We are now ready to complete the proof of Theorem~\ref{thm-isoperimetric -inequalities}. Recall that by Proposition~\ref{prop-S(4,q)} there exists $0<\epsilon (4)$ such that for every minimal $1$-cochain $\varphi$ of $S(4,q)$, $||\delta_1(\varphi)|| \geq 3\epsilon(4)||\varphi||$. Now, since every edge in $S(4,q)$ is contained in $q+1$ triangles, it follows that $|\delta_1(\varphi)| \geq \epsilon(4)(q+1)|\varphi|$.

Write our $\alpha$ as $\gamma_0 + \gamma_1 + \gamma_2 + \gamma_3$ where $\gamma_i$ is the subset of all the triangles of $\alpha$ which have exactly
$i$ thick vertices.  Assume first that $|\gamma_0| > \frac{\epsilon(4)}{100}|\alpha |$. This means that $\frac{\epsilon(4)}{100}$ fraction of the triangles of $\alpha$ have no thick vertex. For such a triangle, almost surely, (when $q \gg 0$), at least $2$ edges are thin (Lemma~\ref{lemma-neighborhood-of-thin-vertex}(\ref{item-two-lemma-neighborhood-of-thin-vertex})). In addition, we know that almost every triangle of $\alpha$ has at least one thin edge (Lemma~\ref{lemma-neighborhood-of-thin-vertex}(\ref{item-one-lemma-neighborhood-of-thin-vertex})). Thus, $r > (1 +  \frac{\epsilon(4)}{100}-o_q(1))|\alpha |$ and Lemma~\ref{lemma-t-1-large-if-many-thins} now finishes the proof.

Assume therefore that $| \gamma_0 | <  \frac{\epsilon(4)}{100}|\alpha|$. Note that by Corollary~\ref{cor-almost-every-alpha-triangle-has-at-most-one-thick-vertex}, $|\gamma_2 + \gamma_3 | \leq \frac{20}{q^{0.1}} |\alpha|$.

Thus,
$$|\gamma_1| = |\alpha -\gamma_0 -\gamma_2 -\gamma_3 | \geq |\alpha|- \frac{\epsilon(4)}{100}|\alpha| - \frac{20}{q^{0.1}} |\alpha| \geq (1 - \frac{\epsilon(4)}{50})|\alpha|, \mbox{ for }q \gg 0.$$
On the other hand, as in Proposition~\ref{prop-b-v-1-and-b-v-2},
$$|\delta_2 (\gamma_1)| \geq \frac{1}{4}(\sum_{v \in X(0)}|\delta_1 \gamma_{1,v}^1 + \gamma_{1,v}^2 | ) \geq
\frac{1}{4}(\sum_{v \in S_0}|\delta_1 \gamma_{1,v}^1 + \gamma_{1,v}^2 | ) \geq
\frac{1}{4}(\sum_{v \in S_0}|\delta_1 \gamma_{1,v}^1 |) -\frac{1}{4}\sum_{v \in S_0} |\gamma_{1,v}^2 |, $$
where $S_0$ is the set of thick vertices.

Now note: by our assumption, $\alpha$ is locally minimal, i.e.,  $\alpha_v^1$ is minimal (i.e. closest possible to $B^1(X_v,\F_2)$ in its coset), the same is true for $\gamma_{1,v}^1$, as the later is a subset of $\alpha_v^1$ and hence also minimal. Thus, $$\sum_{v \in S_0}|\delta_1 \gamma_{1,v}^1 | \geq \sum_{v \in S_0}\epsilon(4)(q+1)|\gamma_{1,v}^1 | =\epsilon(4)(q+1)\sum_{v \in S_0}|\gamma_{1,v}^1 | = \epsilon(4)(q+1)|\gamma_{1} |.$$ The last equality is true since every triangle of $\gamma_1$ has a unique thick vertex.

Let us now evaluate $\sum_{v \in S_0} |\gamma_{1,v}^2 |$. Note that $\gamma_{1,v}^2 $ gives $1$ only to pyramids containing $v$ as well as another thick vertex, and just one like that. Thus,  $\sum_{v \in S_0} |\gamma_{1,v}^2 |$ is bounded by twice the number of pyramids with two thick vertices.
Recall that by Lemma~\ref{lemma-number-of-edges-within-S-0}, $E(S_0,S_0) < \frac{20}{q^{2.1}}|\alpha|$.
Every pyramids with two thick vertices contains an edge from $E(S_0,S_0)$. On such an edge there are at most $2(q^2+q+1)$ triangles and on each triangle $(q+1)$ pyramids. This implies that
$$\sum_{v \in S_0} |\gamma_{1,v}^2 | < 2\frac{20}{q^{2.1}}|\alpha| \cdot 2(q^2+q+1) \cdot (q+1) \leq 100 q^{0.9} |\alpha|.$$
Putting the last three inequalities together we get that,
$$|\delta_2(\gamma_1)| \geq \frac{1}{4}\epsilon(4)(q+1)|\gamma_1| - 25q^{0.9}|\alpha|.$$
Finally we can compute:
\begin{eqnarray}
|\delta_2 \alpha | & =    & |\delta_2(\gamma_0 + \gamma_1 + \gamma_2 + \gamma_3)|  \\
                     & \geq & |\delta_2(\gamma_1)| - |\delta_2(\gamma_0)| - |\delta_2(\gamma_2+\gamma_3)| \\
                     & \geq &  \frac{1}{4}\epsilon(4)(q+1)|\gamma_{1} | - 25 q^{0.9} |\alpha| - (q+1)\frac{\epsilon(4)}{100}|\alpha| -(q+1)\frac{20}{q^{0.1}}| \alpha|
\end{eqnarray}
We used here the fact that for every $\beta \in C^2(X,\F_2)$, $|\delta_2 \beta| \leq (q+1) |\beta |$, and also that we are now under the assumption that $|\gamma_0| < \frac{\epsilon(4)}{100}|\alpha|$ and $|\gamma_2 + \gamma_3| < \frac{20}{q^{0.1}}|\alpha|$.

Now, $|\gamma_1| \geq (1- \frac{\epsilon(4)}{50})|\alpha|$  and altogether

$$|\delta_2 \alpha | \geq  \frac{1}{4}\epsilon(4)(q+1)(1- \frac{\epsilon(4)}{50})|\alpha| -  (q+1)\frac{\epsilon(4)}{100}|\alpha| - 25 q^{0.9} |\alpha| -(q+1)\frac{20}{q^{0.1}}| \alpha| \geq \epsilon(4)(q+1)|\alpha|(\frac{1}{4} - \frac{\epsilon(4)}{200} - \frac{1}{100}) - 50 q^{0.9}|\alpha|$$

As $\epsilon(4)(\frac{1}{4}-\frac{\epsilon(4)}{200}-\frac{1}{100}) \geq 0.2 \epsilon(4)$ we get:

$$|\delta_2 \alpha | \geq  0.2 \epsilon(4) (q+1)|\alpha| - 50 q^{0.9} |\alpha| \geq 0.1\epsilon(4)(q+1)|\alpha|.$$ for $q$ sufficiently large and Theorem~\ref{thm-isoperimetric -inequalities} is proved.


\begin{remark} Theorem~\ref{thm-isoperimetric -inequalities} was proved with $\epsilon_0, \epsilon_1, \epsilon_2$ that are absolute constant independent of $q$ (provided $q \gg 0$). So are $\mu_0$ and $\mu_1$, but $\mu_2 = \frac{c''}{q^2}$ depends on $q$. It is interesting and somewhat useful to know if $\mu_2$ can be made to be independent of $q$.
\end{remark}

We have the following systolic corollary:

\begin{cor}\label{cor-systolic-three-dim}
For $q \gg 0$, if $\alpha \in C^2(X,\F_2)$ represents a non-trivial 2-cohomology class than $|\alpha| \geq q^3 |X(0)|$.
\end{cor}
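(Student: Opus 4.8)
The statement is an immediate consequence of the case $i=2$ of Theorem~\ref{thm-isoperimetric -inequalities}, proved above in this section, together with the reduction to locally minimal cochains from Proposition~\ref{prop-loc-min-properties}. The plan is as follows. Let $\alpha \in Z^2(X,\F_2)\setminus B^2(X,\F_2)$ be a cocycle representing a non-trivial class. First I would replace $\alpha$ by a better representative in the same class: since every $2$-cell of $X$ is contained in exactly $q+1$ pyramids, the function $c(\sigma)$ of (\ref{eqarray-c(sigma)}) is constant on $X(2)$, so by Proposition~\ref{prop-loc-min-properties}(\ref{item-two-prop-loc-min-properties}) the norm on $C^2(X,\F_2)$ is the normalized counting norm; hence Proposition~\ref{prop-loc-min-properties}(\ref{item-one-prop-loc-min-properties}) furnishes a locally minimal $\alpha'\in C^2(X,\F_2)$ with $\alpha'\equiv\alpha \pmod{B^2(X,\F_2)}$ and $|\alpha'|\le|\alpha|$. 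Because $\alpha'$ and $\alpha$ differ by a coboundary and $\alpha\in Z^2\setminus B^2$, we still have $\alpha'\in Z^2(X,\F_2)\setminus B^2(X,\F_2)$; in particular $\delta_2\alpha'=0$ and $\alpha'\neq 0$ (as $0\in B^2$).

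Next, suppose for contradiction that $|\alpha| < q^3|X(0)|$. Then $|\alpha'|\le|\alpha| < q^3|X(0)|$, so $\alpha'$ is a locally minimal $2$-cochain satisfying the hypothesis of the case $i=2$ of Theorem~\ref{thm-isoperimetric -inequalities}. That theorem then gives $|\delta_2\alpha'|\ge \epsilon'' q\,|\alpha'| > 0$ for $q\gg 0$, since $\alpha'\neq 0$. This contradicts $\delta_2\alpha'=0$. Hence $|\alpha|\ge q^3|X(0)|$, which is the claim.

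There is no genuine obstacle here beyond correctly invoking Proposition~\ref{prop-loc-min-properties}: the only point requiring a moment's care is that the passage to a locally minimal representative neither increases the (counting) norm nor leaves the cohomology class, both of which are guaranteed by that proposition, so that the isoperimetric bound can be applied to $\alpha'$ and transferred back to $\alpha$. I would also remark that, by Proposition~\ref{prop-non-trivial-second-cohomology}, there are Ramanujan complexes $X$ as above with $H^2(X,\F_2)\neq 0$, so the corollary is not vacuous, and that this appears to be the first \emph{linear} lower bound on a mod-$2$ cohomological $2$-systole, of the type relevant to the distance of CSS quantum codes.
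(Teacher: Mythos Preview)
Your proof is correct and is exactly the argument the paper has in mind: the corollary is stated without proof, but the identical reasoning---replace $\alpha$ by a locally minimal representative $\alpha'$ via Proposition~\ref{prop-loc-min-properties} and then apply the isoperimetric inequality (here the case $i=2$ of Theorem~\ref{thm-isoperimetric -inequalities} in the form $|\delta_2\alpha'|\ge\epsilon'' q|\alpha'|$ for $|\alpha'|<q^3|X(0)|$) to force $\alpha'=0$---is carried out explicitly for $i=1$ in Corollary~\ref{cor-from-3-dim-result} and again in Section~\ref{section-main-proof}. Your observation that $c(\sigma)$ is constant on $X(2)$, so that the norm and counting norm agree and Proposition~\ref{prop-loc-min-properties} transfers directly to $|\cdot|$, is the only point requiring care, and you handle it correctly.
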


\section{Coboundary expanders and the congruence subgroup property}
\label{section-Serre-conj-implications}

Theorem~\ref{thm-Gromov-criteria-for-top-exp} implies that coboundary expanders are topological expanders. Theorem~\ref{thm-main} gives, for $d=2$, a family $\{Y_{a}\}$ of topological expanders. But, our family falls short of being a family of coboundary expanders. In fact, Proposition~\ref{prop-non-trivial-first-second-cohomology} shows that for many of them $H^1(Y_{a},\F_2) \neq 0$ which violates the expansion property. On the other hand, our proofs of Theorem~\ref{thm-main} shows that this is the only obstacle i.e., these complexes that we construct for the proof of Theorem~\ref{thm-main} (i.e., the $2$-skeletons of $3$-dimensional non-partite Ramanujan complexes) would be coboundary expanders if their first cohomology over $\F_2$ would vanish. Indeed, we prove their cocycle expansion; If $H^1 =0$, this is the same as coboundary expansion for $Y_a$. The goal of this short section is to explain that, assuming a very special case of a conjecture of Serre~\cite{Serre}, infinitely many of the examples we constructed have indeed vanishing $1$-cohomology. Hence they form an infinite family of bounded degree $2$-dimensional coboundary expanders.

Let us recall the following standard definitions: Let $k$ be a global field, i.e., a finite extension of $\Q$ or a field of transcendental degree $1$ over $\F_q$ in the positive characteristic case, $\calO$ the ring of integers of $k$, $S$ a finite set of valuations of $k$ including all the archimedean ones, and $\calO_S=\{x \in k | \nu(x) \geq 0, \forall\nu \notin S\}$-the ring of S-integers. Let $G$ be a simply connected, connected, simple algebraic group defined over $k$ with a $k$-embedding $G\hookrightarrow GL_n$ and $G(\calO_S)=G \bigcap GL_n(\calO_S)$. We say that $G(\calO_S)$ has the {\em congruence subgroup property} if the "congruence kernel": $C(G,S)=\mbox{Ker}(\widehat {G(\calO_S)} \rightarrow G(\widehat{\calO_S} ))$ is finite, where $(\widehat{\mbox{ . }})$ denotes the propfinite completion. The reader is referred to~\cite{Serre}~\cite{PrasadRapinchuk},~\cite{Ragunathan} for details and history of this problem. Serre~\cite{Serre} conjectured that this is indeed the case if $S\mbox{-rank}(G) \geq 2$, i.e., if $\sum_{v \in S}k_v\mbox{-rank}(G) \geq 2$.

We are interested in the Cartwright-Steger arithmetic lattice - the CS-lattice. This discrete co-compact subgroup of $PGL_d(\F_q((t)))$ is an arithmetic group (see~\cite{CS} and ~\cite{LSV2} for a detailed construction and~\cite{LubotzkyJapan} for an exposition) and its S-rank is $d-1$. In particular, the case used in this paper for Theorem~\ref{thm-main} is $d=4$, so the S-rank equals $3$, and it is covered by Serre's conjecture. While Serre's conjecture has been proven for "most" cases, the special case of the CS-lattices is still open (see~\cite{PrasadRapinchuk},~\cite{Ragunathan} for a survey of the current knowledge). Let us say first, that assuming Serre's conjecture for the CS-lattice, the congruence kernel is not just finite, but actually trivial. Indeed, in this case the Margulis-Platonov conjecture (on the finite index subgroups of $G(k)$) is known to hold (\cite{RapinchukSegev},\cite{Ragunathan}) and as explained in~\cite{PrasadRapinchuk} if $C(G,S)$ is finite, it is also central and isomorphic to the metaplectic kernel of $G$ with respect to $k$ and $S$. This metaplectic kernel has been computed by Prasad and Rapinchuk~\cite{PrasadRapinchuk1} and in our case is trivial.

Assume now that the Serre's Conjecture indeed holds in our case, where $d=4$ and $q$ a fixed large odd prime power $q=p^r$. Then for the CS-lattice $\Gamma=G(\calO_S)$, $\widehat{G(\calO_S)}=G(\widehat{\calO_S})$. The simplicial complexes used for the proof of Theorem~\ref{thm-main} are $X_{a}=\Gamma_{a}\backslash B$ when $B=A_3(\F_q((t)))$ and where $\Gamma_{a}$ are congruence subgroups of $\Gamma$, and $X_{a}$ are non-partite. As explained in~\cite{LSV2}, $\Gamma_{a}$ can be taken to be a principle congruence subgroup of the form $\Gamma(I)=\mbox{Ker}(G(\calO_S) \rightarrow G(\calO_S/I))$ when $I \triangleleft \calO_S$. Moreover Theorem $7.1$ there ensures that for every sufficiently large $e$ there exists an irreducible polynomial $f(x)$ of degree $e$, so that if $I=(f(x))$ is the ideal generated by $f(x)$, $X_I = \Gamma(I)\backslash B$ is non-partite. Now, the assumption that $\widehat{G(\calO_S)} = G(\widehat{\calO_S})$ implies that for such an $I$, $\widehat{\Gamma(I)} = K_{v_f} \times \prod_{v \neq v_f}G(\calO_{S,v})$, where for a valuation $v$ of $k$, $\calO_{S,v}$ is the $v$-completion of $\calO_{S}$, $v_f$ is the valuation associated with $f(x)$ and $K_{v_f}$ is the normal subgroup of $G(\calO_{S,v_f})$,
$K_{v_f} = \mbox{Ker}(G(\calO_{S,v_f}) \rightarrow G(\calO_{S,v_f}/f(x)\calO_{S,v_f}))$. The group $K_{v_f}$ is a pro-p group and each of $G(\calO_{S,v})$ is an extension of a pro-p group by a quasi-simple finite group. As $p$ is odd, it follows that $[\widehat{\Gamma(I)}, \widehat{\Gamma(I)}]\widehat{\Gamma(I)}^2=\widehat{\Gamma(I)}$, i.e., $\widehat{\Gamma(I)}$ has no quotient that is abelian of order $2$. The same holds therefore for $\Gamma(I)$. Hence, $H^1(\Gamma(I),\F_2)=0$. Now, as the building $B$ is contractible, it follows that $H^1(X_I,\F_2)=0$. Finally, $Y_I$, the 2-skeleton of $X_I$, satisfies $H^1(Y_I,\F_2) = H^1(X_I,\F_2)$ and so $H^1(Y_I,\F_2)=0$ as promised. We can summarize:

\begin{cor} Assume that for some large odd prime power $q$, the Cartwright-Steger arithmetic lattice of $PGL_4(\F_q((t)))$ satisfies the congruence subgroup property (as predicted by Serre's conjecture). Then there exists an infinite family of bounded degree $2$-dimensional coboundary expanders.
\end{cor}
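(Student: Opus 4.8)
The plan is to upgrade the topological expansion of Theorem~\ref{thm-main} to genuine $\F_2$-coboundary expansion, using the congruence subgroup property only to kill the first $\F_2$-cohomology. First I would recall that the proof of Theorem~\ref{thm-main} in Section~\ref{section-main-proof} establishes strictly more than the hypotheses of Theorem~\ref{thm-Gromov-criteria-for-top-exp}: for the $2$-skeletons $Y_a = X_a^{(2)}$ of the $3$-dimensional non-partite Ramanujan complexes $X_a$ it bounds the cofilling constants $\mu_0(Y_a)$ and $\mu_1(Y_a)$ by a constant independent of $a$, i.e.\ it shows the $Y_a$ form a family of cocycle expanders ($\tilde{\epsilon}_i(Y_a) = 1/\mu_i(Y_a)$ bounded below). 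Since $\tilde{\epsilon}_i = \epsilon_i$ whenever $H^i(Y_a,\F_2) = 0$, a subfamily in which $H^0$ and $H^1$ both vanish is automatically a family of $\F_2$-coboundary expanders, and bounded degree is already noted ($O(q^5)$ $2$-cells per vertex). Now $H^0(Y_a,\F_2) = 0$ always because $Y_a^{(1)} = X_a^{(1)}$ is connected, and $H^1(Y_a,\F_2) = H^1(X_a,\F_2)$ since $H^1$ sees only the $2$-skeleton; so the entire task reduces to producing infinitely many non-partite Ramanujan $X_a$ with $H^1(X_a,\F_2) = 0$.

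To do that I would take $\Gamma = G(\calO_S)$ the Cartwright--Steger lattice in $PGL_4(\F_q((t)))$, $q = p^r$ with $p$ odd, and invoke~\cite{LSV2}: for every sufficiently large $e$ there is an irreducible $f$ of degree $e$ such that, with $I = f\calO_S$ and $\Gamma(I) = \mathrm{Ker}(G(\calO_S) \to G(\calO_S/I))$, the quotient $X_I = \Gamma(I)\backslash B$ is a non-partite Ramanujan complex; letting $e \to \infty$ yields an infinite family. Assuming the congruence subgroup property for $\Gamma$, one has $\widehat{G(\calO_S)} = G(\widehat{\calO_S})$ (in fact the congruence kernel is trivial here, as Margulis--Platonov is known and the metaplectic kernel was computed in~\cite{PrasadRapinchuk1}), hence $\widehat{\Gamma(I)} = K_{v_f} \times \prod_{v \neq v_f} G(\calO_{S,v})$, where $K_{v_f}$ is the principal congruence subgroup of $G(\calO_{S,v_f})$ modulo $f$, a pro-$p$ group, and each $G(\calO_{S,v})$ is an extension of a pro-$p$ group by a quasi-simple finite group. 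Since $p$ is odd, none of these factors has an abelian quotient of order $2$, so $[\widehat{\Gamma(I)},\widehat{\Gamma(I)}]\,\widehat{\Gamma(I)}^2 = \widehat{\Gamma(I)}$; the same then holds for the dense discrete group $\Gamma(I)$, giving $\Gamma(I)/([\Gamma(I),\Gamma(I)]\Gamma(I)^2) = 0$. As $B$ is contractible, $H^1(X_I,\F_2) = H^1(\Gamma(I),\F_2) = \Gamma(I)/([\Gamma(I),\Gamma(I)]\Gamma(I)^2) = 0$, whence $H^1(Y_I,\F_2) = 0$.

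Putting the two parts together, $\{Y_I\}$ (for $e$ large) is an infinite family of bounded-degree $2$-dimensional cocycle expanders with $H^0(Y_I,\F_2) = H^1(Y_I,\F_2) = 0$, hence an infinite family of $\F_2$-coboundary expanders, as claimed. The only step genuinely beyond citing Theorem~\ref{thm-main} and~\cite{LSV2} is the pro-$2$/profinite-completion computation forcing $H^1 = 0$, and this is routine once the completion is identified; the real obstacle is that the whole argument rests on the congruence subgroup property for the Cartwright--Steger lattice, which is exactly the case of Serre's conjecture that remains open — so the statement is unavoidably conditional, while the rest is bookkeeping with the filling-constant/cocycle-expansion dictionary of Section~\ref{section-homological-defs}.
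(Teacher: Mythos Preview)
Your proposal is correct and follows essentially the same approach as the paper: reduce to showing $H^1(Y_I,\F_2)=0$ via the cocycle-expansion already established in Section~\ref{section-main-proof}, then use the congruence subgroup property to identify $\widehat{\Gamma(I)}$ as a product of a pro-$p$ group with groups having no index-$2$ quotients (since $p$ is odd), forcing $H^1(\Gamma(I),\F_2)=0$. The ingredients you cite---triviality of the congruence kernel via Margulis--Platonov and the metaplectic kernel computation, the supply of non-partite quotients from~\cite[Theorem~7.1]{LSV2}, and contractibility of $B$---match the paper's argument line by line.
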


\begin{remark} In fact, a much weaker assumption than Serre's Conjecture is needed. Namely, Serre predicts that $C=C(G,S)$ is trivial (in our case). We only need that $C/[C,C]C^2$ is trivial.
\end{remark}

\remove{
\section{proof sketch for long locally minimal $1$-cocycles of $3$-dimensional complex}
In this section we aim to provide a proof sketch for showing that locally minimal $1$-cocycles of $3$-dimensional complex should have linear support.
We use the same notations as in Section~\ref{section-two-dim-proof}.
Recall that the complex $X$ has the following properties:

$|X(0)|=n$, the $1$-skeleton of $X$ is a $k$-regular graph where $k \approx q^4$, its second largest eigenvalue is bounded by $4\sqrt{k} \approx 4q^2$.

Recall that $X$ contains $2$ types of edges white and black.

Every vertex $v \in X$ is incident to $\approx q^4$ white edges and $\approx q^3$ black edges.

Every white edge participates in $2(q+1)$ triangles. Every black edge participates in $2(q^2+q+1)$ triangles.

Let $\alpha$ denote a locally minimal $1$-cocycle. $\alpha_1$ denotes the white edges of $\alpha$ and $\alpha_2$ denotes the back edges of $\alpha$. $\alpha_v^i$ denotes a subset of $\alpha_i$ that is adjacent to $v$, $i=1,2$.

$t_1 + 2t_2+ 3t_3 = 2(q+1)|\alpha_1| + 2(q^2+q+1)|\alpha_2|$

$|\delta_1(\alpha)| = t_1 + t_3$

$\sum_{v \in X(0)}|E_{X_v}(\alpha_v,\bar{\alpha_v}) | = 2t_1+2t_2$

A vertex $v \in X$ is called {\em thin} if $|\alpha_v^1| < q^{3.7}$ and $|\alpha_v^2| < q^{2.75}$, otherwise $v$ is called {\em thick}.

By the local minimality assumption $|\alpha_v| \leq \frac{k}{2} \approx \frac{q^4}{2}$

Let $R$ denote the set of thin vertices

Let $S$  denote the set of thick vertices.

Let $r_i = \sum_{v \in R}|\alpha_v^i|$,  $s_i = \sum_{v \in S}|\alpha_v^i|$

$r_1 + r_2 + s_1 + s_2 = 2|\alpha_1| + 2|\alpha_2| = 2|\alpha|$

Using Lemma~\ref{lemma-neighborhood-of-thin-vertex-defs} we can deduce that for every $v$ that is a thin vertex
$|E_{X_v}(\alpha_v,\bar{\alpha_v}) |  \geq [2q - o_1(q)]|\alpha_v^1| + [2q^2 - o_1(q)]|\alpha_v^2|$.

The link $X_v$ of every vertex $v$ is a $3$-partite graph with two parts each of size $\approx q^3$ and one part of size $\approx q^4$, the smaller parts have degree $q^2+q+1$ to each of the two other parts. The larger part has degree $q+1$ to each of the smaller parts.

Claim 1
Using the expander mixing lemma for bipartite graphs we can deduce that for every vertex $v$
$|E_{X_v}(\alpha_v,\bar{\alpha_v}) |  \geq [q-o_q(1)]|\alpha_v^1| + [\frac{q^2}{2} - o_q(1)]|\alpha_v^2|$.

The reason is as follows. we consider the link graph as a 3 bipartite graphs with parts $M_1 , M_2 , M_3$. $|M_1|=|M_3| \approx q^3$, $|M_2| \approx q^4$. $\alpha_v^i$ induce subsets of vertices of these 3-partite graph in the obvious way. We call these subsets of vertices $\alpha_v^{1,1} \subseteq M_1$, $\alpha_v^{1,3}\subseteq M_3$, $\alpha_v^{2}\subseteq M_2$.

by the mixing lemma for bipartite graphs $|E(\alpha_v^{2},\alpha_v^{1,1})| < \frac{\sqrt{(q+1) \cdot (q^2+q+1)} |\alpha_v^{2}||\alpha_v^{1,1}|}{\sqrt{q^4 \cdot q^3}}$.
Since  $|\alpha_v^{2}| < q^4/2$
$|E(\alpha_v^{2},\alpha_v^{1,1})| < \frac{q^{1.5} \cdot (q^4/2) |\alpha_v^{1,1}|}{q^{3.5}} = \frac{q^2}{2}| |\alpha_v^{1,1}|$

Similarly

$|E(\alpha_v^{2},\alpha_v^{1,3})| <  \frac{q^2}{2}| |\alpha_v^{1,3}|$

|E_{X_v}(\alpha_v,\bar{\alpha_v}) | \geq  |E(\alpha_v^{2},\bar{\alpha_v^{1,3}})| + |E(\alpha_v^{2},\bar{\alpha_v^{1,3}})| \geq q^2/2 \alpha_v^1

if $|\alpha_v^2|q > q^2/2 |\alpha_v^{1,1}|

$|E(\alpha_v^{2},\alpha_v^{1,1})| < \frac{q^{1.5} \cdot (q^4/2) 2|\alpha_v^{2}|/q}{q^{3.5}} = q| |\alpha_v^{2}|$

Similarly if $|\alpha_v^2|q > q^2/2 |\alpha_v^{1,3}|

$|E(\alpha_v^{2},\alpha_v^{1,3})| <  q |\alpha_v^{2}|$

i.e. in this case |E_{X_v}(\alpha_v,\bar{\alpha_v}) | \geq  |E(\alpha_v^{2},\bar{\alpha_v^{1,3}})| + |E(\alpha_v^{2},\bar{\alpha_v^{1,3}})| \geq q \|alpha_v^2|

}
\paragraph{Acknowledgments.}
The authors are grateful to G. Kalai, N. Linial, R. Meshulam, E. Mossel, S. Mozes, A. Rapinchuk, J. Solomon and U. Wagner for useful discussions and advice. We thank also the ERC, ISF, BSF and NSF for their support.

\end{document}